\def\SM#1{\color{red}#1 \color{black}}
\providecommand{\keywords}[1]
{
	\small	
	\textbf{\textit{Keywords---}} #1
}
\def\R{\mathbb{R}}
\def\e{\varepsilon}
\DeclareMathOperator{\supp}{supp}
\def\p{\partial}
\def\:{\colon}
\newtheorem{thm}{Theorem}[section]
\newtheorem{lem}[thm]{Lemma}
\newtheorem{prp}[thm]{Proposition}
\theoremstyle{definition}
\newtheorem{dfn}[thm]{Definition}
\theoremstyle{remark}
\theoremstyle{example}
\def\thetitle{Impact of the horizontal gene transfer on the evolutionary equilibria of a population}
\def\theauthor{Alejandro Gárriz, Alexis Léculier, Sepideh Mirrahimi}
\title{\thetitle}
\author{\theauthor}
\begin{document}
	
	\maketitle
	
	 \begin{abstract}
	 How does the interplay between selection, mutation and horizontal gene transfer modify the phenotypic distribution of a bacterial or cell population? While  horizontal gene transfer,  which corresponds to the exchange of genetic material between individuals, has a major role in the adaptation of many organisms, its  impact on the phenotypic density of  populations is not yet fully understood.  
	 
	 We study an elliptic integro-differential equation describing the evolutionary equilibrium of the phenotypic density of an asexual population.   In a regime of small mutational variance, we characterize the solution which results from the balance between   competition for a resource, mutation and horizontal gene transfer.  We show that in a certain range of  parameters polymorphic equilibria exist, which means that the phenotypic density may concentrate around several dominant traits. Such polymorphic equilibria result from an antagonist interplay between  horizontal gene transfer and selection, while similar models which neglect  the transfer lead only to monomorphic equilibria.

	\end{abstract}
	
	\keywords{Integro-differential equations, Hamilton-Jacobi equations, Asymptotic Analysis, Adaptive Evolution, Horizontal Gene Transfer}
	
	\tableofcontents
	\section{Introduction}

	\subsection{Model and biological motivations}
	
Horizontal gene transfer, which corresponds to the exchange of genetic materials between individuals, has a major role in the evolution and adaptation of many organisms, as for instance in the evolution of bacterial virulence or antibiotic resistance \cite{HO.JL.EG:00}.  An important example of horizontal transfer mechanism concerns bacterial plasmids which can modify significantly the fitness of their hosts \cite{DR.ER.SB:11,FS.BL:77}. Plasmids are small circular doubly stranded DNA, physically separated from the chromosomal DNA, which may be replicated and transferred  from a cell to another, when they are in contact, independently of the chromosome. They carry factors that can be beneficial for the survival of the bacteria and lead to a selective advantage, as for genes for antibiotic resistance. However, they also have fitness costs, like reduced reproduction rate. What is the  outcome of the trade-off between the fitness costs of the plasmids and their advantage by their accelerated spread?\\

Recent studies have shown that the interplay between mutation, horizontal gene transfer and selection may lead to new behaviors when compared to models considering only selection and mutation, where the population is usually driven to the fittest traits \cite{SB.PC.RF:16,BilliardFerriereMeleardTran,CalvezFigueroaAl}.  Such an interplay may for instance lead to extinction of a population or re-emergence of an apparently extinct trait and a cyclic behavior of the population  \cite{BilliardFerriereMeleardTran}. In such a situation, horizontal transfer, if it occurs with a rather strong rate, drives the population to unfit traits. Then, whether a small apparently extinct subpopulation with a fitter trait re-emerges and repopulates the environment (this is called an evolutionary rescue), or the population goes extinct (this is called an evolutionary suicide).  Note that the first scenario may for instance be interpreted as a re-emergence of antibiotic resistance, while the second one may correspond to a successful treatment.  While such types of behaviors are observed numerically in \cite{BilliardFerriereMeleardTran,CalvezFigueroaAl}, a   theoretical understanding  of them  is still lacking (see however  \cite{SB.PC.RF:16,NC.SM.CT:21} where some stochastic models considering   a finite number of strains have been studied).

We consider the following integro-differential model describing the dynamics of the phenotypic density of 	an asexual population subject to mutation, selection and horizontal transfer:
	\begin{equation}\label{eq:time_dependant_main}
		\begin{cases}
			\partial_t  n( t,  z) = \sigma\ \partial^2_{zz} n (t,  z)+\left(R(z) -  \kappa \rho( t)\right) n( t, z) +  \tau\cdot n( t, z)\displaystyle\int_\R \frac{ n( t,  y)}{ \rho( t)}\cdot H(K( z - y))\ {\rm dy}\\
			n( 0,  z) = n_0(z),\\
			n(t, z)>0,\\
			\rho(t)=\displaystyle\int_\R n(t,y)\ {\rm dy}.
		\end{cases}
	\end{equation}
	A variant of this model was derived from stochastic individual based model in \cite{BilliardFerriereMeleardTran}.
	Here, $n(t,z)$ stands for the phenotypic density of a population, with   $t\in \R^+$ and $z\in \R$ corresponding respectively to time and a phenotypic trait. The diffusion term models the mutations which generate phenotypic variability in the population. Individuals grow at rate $R(z)$ and are regulated by a uniform competition for resources with intensity $\kappa$. The last nonlinear and nonlocal term in the right hand side of the equation corresponds to the horizontal transfer term. More precisely, $\tau$ denotes the transfer rate and   $H(K(z-y))$ denotes the transfer flux from trait $y$ to trait $z$, with  $K$  a steepness parameter for the transfer flux (see Section \ref{sec:assumptions} to understand its role).\\
 
	In this work we focus on the qualitative properties of the stationary solution of the equation above, that is
	\begin{equation}
     \label{steady}
	-\sigma n'' ( z)=\left(R(z) -  \kappa\rho\right) n(z) +  \tau n(z)\int_\R \frac{ n(  y)}{ \rho}H(K( z - y))\ {\rm dy}.	    
	\end{equation}
We provide an asymptotic analysis of the equation above considering small mutational effects, that is $\sigma<<1$. 
The study of the steady solution is a first step in the theoretical description of the behaviors observed numerically in \cite{BilliardFerriereMeleardTran,CalvezFigueroaAl}, which studied stochastic and deterministic models close to \eqref{eq:time_dependant_main}. We will see below that our work allows to identify a new feature, that is the existence of polymorphic steady solutions, which was not observed in the previous numerical results \cite{BilliardFerriereMeleardTran,CalvezFigueroaAl} .

 \subsection{State of the art}
 
 Models of horizontal gene transfer have been studied considering a finite number of strains and using ordinary differential equations \cite{LEVIN1979, FS.BL:77}, or in a population genetics context without ecological concern \cite{AN.GK.EK:05,ST.SB:13}. In \cite{PH.LF.PM.GW:09,PM.GR:15} some integro-differential models of horizontal transfer have been studied  in a different context than our work.  Our study follows a series of works based on stochastic individual based models motivated by the eco-evolutionary dynamics of bacterial plasmids \cite{SB.PC.RF:16,BilliardFerriereMeleardTran,CalvezFigueroaAl,NC.SM.CT:21}. More specifically, in \cite{BilliardFerriereMeleardTran} a stochastic individual based model was introduced considering a quantitative trait. It was shown that in the limit of large populations such an individual based model converges to an equation close to \eqref{eq:time_dependant_main}, where the mutation term is modeled via an integral kernel rather than a Laplace term.  Three types of behavior where identified in the numerical simulations in \cite{BilliardFerriereMeleardTran}. In the first scenario, the phenotypic density would concentrate around a trait close to the trait with maximal growth rate. In the second scenario,  we observe a cyclic behavior. The population concentrates around an evolving trait. The horizontal transfer   drives the dominant trait to an unfit value. Then, a small apparently extinct subpopulation with a fitter trait emerges. This emergent trait is again driven to an unfit trait and such dynamics are observed repeatedly. In the third scenario, the population concentrates first around an evolving trait. The horizontal transfer drives the dominant trait to an unfit value, this time no small subpopulation emerges and the population goes extinct. In \cite{CalvezFigueroaAl}, theses stochastic simulations where compared with the numerical resolution of an integro-differential model derived in \cite{BilliardFerriereMeleardTran}, considering small mutational effects and similar types of behavior where observed. In \cite{NC.SM.CT:21} a stochastic model with a finite number of strains was studied theoretically. In a particular case of three strains, a periodic behavior was captured in a certain range of parameters.\\
 
In this paper, we provide an asymptotic analysis of  the steady solution of \eqref{eq:time_dependant_main}, that is the solution to \eqref{steady}, considering small mutational effects. This is a first step to provide a theoretical description of the behaviors observed numerically in \cite{BilliardFerriereMeleardTran,CalvezFigueroaAl}. The choice of the Laplace term, instead of an integral kernel for the mutation term has been done to reduce the technicality of the analysis. We believe that this choice would not modify the qualitative behavior of the solution in the limit of vanishing mutations. \\

To perform our analysis we use an approach based on Hamilton-Jacobi equations. This approach was first introduced in \cite{DJMP} and then widely developed to study models of quantitative traits from evolutionary biology (see for instance \cite{BarlesMirrahimiPerthame, BarlesPerthame1, BarlesPerthame2}). A closely related approach was also previously used in the geometric optics approximation of solutions of reaction-diffusion equations (see for instance \cite{ EvansSouganidis, Freidlin}). Here, we extend this approach to the study of horizontal gene transfer. Note that some heuristic computations using this approach were provided in \cite{CalvezFigueroaAl}.

	\subsection{Assumptions}
	\label{sec:assumptions}
	
	We make the following assumptions on the transfer term $H$:
	\begin{equation}\label{eq:hypothesis_H}\tag{H1}
	\begin{aligned}
		(1)\quad &H \in C^3(\mathbb{R}) \text{ is odd and monotone increasing from -1 to 1}.\\
		(2)\quad &H(0)=0, H'(0)=1, H''(z)<0\text{ for all }z>0 .\\
		(3)\quad &\text{There exists a positive }z_H\text{ such that for all }|z|>z_H, H'''(z)>0,\\
		&\text{while for all }|z|\leq z_H, H'''(z)\leq 0.
	\end{aligned}
	\end{equation}
	 The examples that we have in mind are the functions 
	$$
	H(z)=\tanh(z)\qquad \text{or}\qquad H(z)=\frac{2}{\pi}\arctan(z).
	$$
	One can think of this kernel as $H(z-y) = \alpha(z-y) - \alpha(y-z)$ with $\alpha$ a smooth function that behaves like a Heaviside step function. {Then, one would consider that the transfer arises only from larger traits $y$ to smaller traits $z$, with $z<y$ and the transfer rate between $y$ to $z$ would be given by $\alpha(z-y)$. This choice of transfer term is motivated by the example of plasmids which are transmitted from one bacterium to another by cell-to-cell contact. }\\
 
	Next, the values $\tau$ and $K$ are considered to be strictly positive, i.e.,
	$$
	\tau>0\quad\text{and}\quad K>0.
	$$
	The value $\tau$ is understood as the strength of the transfer, while the value $K$ in~\eqref{eq:time_dependant_main} corresponds to the steepness of the  transfer rate. Note that as $K\to\infty$ the transfer rate $H(K z)$ approaches the Heaviside step function.\\
 
	Lastly, we provide our assumptions on the growth term $R(z)$:
	\begin{equation}\tag{H2}
	\label{eq:hypothesis_R}
		\begin{aligned}
			(1)\quad &R\in C^2(\R),\\
			(2)\quad  &\text{There exists a bounded domain }D_R \text{ such that }R(z)\geq 0\text{ for all }z\in \overline{D_R}\\
			&\text{ and }R(z)< 0\text{ for all }z\in D_R^c,\quad\text{and}\\
			(3)\quad &\lim\limits_{|z|\to\infty}R(z)=-\infty.
		\end{aligned}
	\end{equation}
	A typical example is given by
\begin{equation}
\label{R:quadratic}
  R(z)=1-gz^2,  
\end{equation}
which will be studied in detail later on in the article.
	
	\subsection{Preliminary tools}
	
	\subsubsection{An adimensional parameterization of the problem }\label{subsec:change_of_variables}

	We introduce a dimensionless parameterization of the problem via the following  change of variables
	$$
	\tilde{z}= K z,\quad \tilde n( \tilde z)=\frac{\kappa}{r K}\cdot n\left(\frac{\tilde z}{K}\right), \quad \varepsilon^2=\frac{\sigma K^2}{r}, \quad \tilde R(\tilde z)=\frac{R(\frac{\tilde z}{K})}{r} ,\quad\text{and}\quad \tilde \tau = \frac{\tau}{r},
	$$
	where $r$ is defined as
	$$
	r:=\max\limits_{z\in \R} R(z).
	$$
    The problem \eqref{steady} is then written (we drop the tildes   for the sake of readability) as
	\begin{equation}\label{eq:main}
		\begin{cases}
			-\varepsilon^2 n''_\varepsilon (z)=\left(R(z) -  \rho_\varepsilon \right) n_\varepsilon(z) +  \tau\cdot n_\varepsilon( z)\displaystyle\int_\R \frac{ n_\varepsilon( y)}{ \rho_\varepsilon}H( z - y)\ {\rm dy}\\
			n_\varepsilon( z)>0,\\
			\rho_\varepsilon=\displaystyle\int_\R n_\varepsilon(y)\ {\rm dy}.
		\end{cases}
	\end{equation}
    Note that in this new version, the selection term is re-normalized such   that
	$$
	\max_{z\in\R}R(z)=1.
	$$
	Note also that if we were considering the time-dependent equation from~\eqref{eq:time_dependant_main} we would make the change of variables
	$$
	\tilde t= rt.
	$$	
	In the particular case where $R(z)=r-gz^2$ we also consider the following change of variable
	$$
	\tilde g=\frac{g}{r K^2}
	$$
	which leads, again after dropping the tilde for the sake of readability, to
	\begin{equation}\label{eq:main_growth}
		\begin{cases}
			-\varepsilon^2  n''_\varepsilon (z)=\left(1-gz^2 -  \rho_\varepsilon \right) n_\varepsilon(z) +  \tau\cdot n_\varepsilon( z)\displaystyle\int_\R \frac{ n_\varepsilon( y)}{ \rho_\varepsilon}H( z - y)\ {\rm dy}\\
			n_\varepsilon( z)>0,\\
			\rho_\varepsilon=\displaystyle\int_\R n_\varepsilon(y)\ {\rm dy}.
		\end{cases}
	\end{equation}
	
	\subsubsection{An eigenvalue problem}
	
	Before presenting our   main results  let us introduce the following eigenvalue problem 
	\begin{equation}\label{eq:eigenpair_whole_space}
		\begin{cases}
			-\varepsilon^2 N''_\varepsilon(z) - R(z)N_\varepsilon(z) = -\lambda_\varepsilon(\R) N_\varepsilon(z),\quad z\in\Omega\\
			N_\varepsilon(z)>0, \quad z\in \Omega, \quad \|N_\varepsilon\|_{L^2(\R)}=1.	
		\end{cases}
	\end{equation}
   with $\lambda_\varepsilon(\R)\in \mathbb{R}$   the principal eigenvalue of the problem and 
	 $N_\varepsilon\in H^1(\R)$ the principal eigenfunction. By classic  theory, since $R(z)$ is a confining term, there exists a unique eigenpair $(N_\varepsilon,\lambda_\varepsilon(\R))$ and in fact
	\begin{equation}\label{eq:eigenvalue}
		-\lambda_\varepsilon(\R) = \min\limits_{u\in H^1(\R), u\neq 0}\frac{1}{\|u\|^2_{L^2(\R)}}\left\{ \int_\R\varepsilon^2 ( u')^2- Ru^2 {\rm dz}\right\}.
	\end{equation}
	Finally, we have the following Lemma, which is proven in Section~\ref{sec:dirichlet}.
 \begin{lem}
 \label{lem:limit_eigenvalue}
     As $\varepsilon\to 0$, $\lambda_\varepsilon(\R)$ converges to $ 1$. Moreover, we have $\lambda_\varepsilon(\R)\leq 1$.
 \end{lem}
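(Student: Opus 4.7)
The plan is to exploit the variational characterization \eqref{eq:eigenvalue}, rewritten as
\begin{equation*}
\lambda_\varepsilon(\R) = \sup_{u \in H^1(\R), u \neq 0} \frac{1}{\|u\|_{L^2(\R)}^2}\left\{ \int_\R R u^2 - \varepsilon^2 (u')^2 \, \d z \right\}.
\end{equation*}
The two statements of the lemma are then obtained respectively by a trivial pointwise bound and by constructing a nearly-optimal concentrated test function. Because $\sup_{z \in \R} R(z) = 1$ after the normalization made in Section~\ref{subsec:change_of_variables}, this yields exactly the value $1$ in the limit.

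First, I would establish the upper bound $\lambda_\varepsilon(\R) \leq 1$. For every $u \in H^1(\R) \setminus \{0\}$ we have $\int_\R R u^2 \leq \|u\|_{L^2(\R)}^2$ because $R \leq 1$ pointwise, while the term $\varepsilon^2 \int_\R (u')^2$ is nonnegative. Dividing by $\|u\|_{L^2(\R)}^2$ and taking the supremum over $u$ gives the claimed inequality. This step is uniform in $\varepsilon$.

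For the matching asymptotic lower bound, I would pick a point $z_0 \in \R$ where $R(z_0) = 1$ (this exists by \eqref{eq:hypothesis_R} combined with the renormalization) and test the Rayleigh quotient against a concentrating family. Fix a nonnegative $\phi \in C_c^\infty(\R)$ with $\|\phi\|_{L^2(\R)} = 1$, and define for a scale $\delta_\varepsilon$ to be chosen
\begin{equation*}
u_\varepsilon(z) = \phi\!\left(\frac{z-z_0}{\delta_\varepsilon}\right).
\end{equation*}
A direct change of variables gives $\|u_\varepsilon\|_{L^2(\R)}^2 = \delta_\varepsilon$, $\varepsilon^2 \|u_\varepsilon'\|_{L^2(\R)}^2 = \varepsilon^2 \delta_\varepsilon^{-1} \|\phi'\|_{L^2(\R)}^2$, and
\begin{equation*}
\int_\R R(z) u_\varepsilon^2 \, \d z = \delta_\varepsilon \int_\R R(z_0 + \delta_\varepsilon y) \, \phi(y)^2 \, \d y.
\end{equation*}
Choosing, for instance, $\delta_\varepsilon = \sqrt{\varepsilon}$, we have both $\delta_\varepsilon \to 0$ and $\varepsilon/\delta_\varepsilon \to 0$. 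By continuity of $R$ at $z_0$ together with dominated convergence, the integrand converges to $R(z_0) = 1$, so that the first ratio tends to $1$, while the mutational term contributes $O(\varepsilon/\delta_\varepsilon^2 \cdot \delta_\varepsilon) = O(\varepsilon/\delta_\varepsilon) = o(1)$. Hence $\lambda_\varepsilon(\R) \geq 1 - o(1)$, and combining with the upper bound yields $\lambda_\varepsilon(\R) \to 1$.

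Neither step is genuinely difficult; the only point requiring mild care is the scaling $\delta_\varepsilon$, which must be chosen so that the test function is both broad enough for $\varepsilon^2 (u_\varepsilon')^2$ to vanish (requiring $\varepsilon \ll \delta_\varepsilon$) and narrow enough for $R$ to be close to its maximum on the support (requiring $\delta_\varepsilon \to 0$). Any $\delta_\varepsilon$ with $\varepsilon \ll \delta_\varepsilon \ll 1$ works, and $\delta_\varepsilon = \sqrt{\varepsilon}$ is the simplest such choice.
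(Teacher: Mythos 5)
Your proof is correct, but it takes a genuinely different route from the paper. You work entirely with the Rayleigh-quotient characterization~\eqref{eq:eigenvalue}: the bound $\lambda_\varepsilon(\R)\leq 1$ follows from the pointwise inequality $R\leq 1$ plus positivity of the gradient term, and the convergence follows from testing with a concentrating profile $\phi\bigl((z-z_0)/\delta_\varepsilon\bigr)$ at a maximum point of $R$, with the scale $\e\ll\delta_\varepsilon\ll 1$. The paper instead first notes from~\eqref{eq:eigenvalue} that $\lambda_\varepsilon(\R)$ is monotone as $\varepsilon$ decreases (so the limit $\lambda_0$ exists and $\lambda_\varepsilon(\R)\leq\lambda_0$), then performs the Hopf--Cole transformation $N_\varepsilon=e^{V_\varepsilon/\varepsilon}$ and passes to the viscosity limit, invoking the arguments of Section~\ref{section:limit_varepsilon}, to obtain $-(V')^2=R-\lambda_0$ with $\max V=0$, which forces $\lambda_0=\max R=1$; the bound $\lambda_\varepsilon(\R)\leq 1$ then comes from the monotonicity. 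Your argument is more elementary and self-contained (no forward reference to the Hamilton--Jacobi machinery, and the upper bound is obtained uniformly in $\varepsilon$ without using monotonicity), which is the classical semiclassical computation of the principal eigenvalue; the paper's version has the advantage of also recording the monotonicity of $\lambda_\varepsilon(\R)$ and of fitting the viscosity-solution framework used throughout the article. One tiny bookkeeping slip on your side: the mutational contribution to the quotient is $\varepsilon^2\delta_\varepsilon^{-2}\|\phi'\|_{L^2}^2$, not $O(\varepsilon/\delta_\varepsilon)$, but since $\varepsilon^2/\delta_\varepsilon^2\leq \varepsilon/\delta_\varepsilon$ under your standing assumption $\varepsilon\ll\delta_\varepsilon$, the conclusion is unaffected.
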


	\subsection{Main results}
	
	\subsubsection{Existence of solutions}
	Let us present the main results of our study, commencing with the existence theory. Here, we provide a sufficient condition to obtain existence of a non-trivial solution to \eqref{eq:main}. To this end, we first introduce the following assumption
 \begin{equation}
 \label{H:tau}
      \tau<\lambda_\varepsilon(\R).
 \end{equation}
	\begin{thm}\label{thm:existence}
		Let $\varepsilon>0$ and assume \eqref{eq:hypothesis_H}, \eqref{eq:hypothesis_R} and \eqref{H:tau}. Then there exists a non-trivial solution $n_\varepsilon$ to problem~\eqref{eq:main}. Moreover this solution satisfies
		$$
		\max\limits_{z\in\R} n_\varepsilon(z)\geq  \tilde\delta(1-\tau),
		$$
		where $\tilde\delta$ is a positive constant depending only on the function $R$ and the constant $\tau$,
		\begin{equation}
		\label{boundrho}
		0<\lambda_\varepsilon(\R)-\tau \leq \rho_\varepsilon\leq 1 + \tau,
		\end{equation}
		and
		\begin{equation}\label{eq:mass_varepsilon}
			\displaystyle \int_\R R(z) n_\varepsilon(z)\  {\rm dz}-\rho_\varepsilon^2=0.
		\end{equation}
	\end{thm}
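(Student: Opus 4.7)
The plan is to prove existence via a Schauder fixed-point argument phrased as a nonlinear eigenvalue problem. For any probability density $u\in L^1_+(\R)$ with $\int_\R u = 1$, let $(\mu(u), \phi_u)$ denote the principal eigenpair (with $\phi_u>0$ and $\int_\R \phi_u = 1$) of the Schr\"odinger-type operator
\begin{equation*}
\mathcal{L}_u \phi := -\varepsilon^2 \phi'' - \bigl(R(z) + \tau (H*u)(z)\bigr)\phi, \qquad (H*u)(z) := \int_\R u(y)\, H(z-y)\, {\rm d}y.
\end{equation*}
Because $R$ is confining and $|\tau H*u|\leq \tau$ pointwise, $\mathcal{L}_u$ has compact resolvent and a well-defined principal eigenpair. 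Monotonicity of the principal eigenvalue in the potential, together with \eqref{eq:eigenvalue}, yields
\begin{equation*}
-\lambda_\varepsilon(\R) - \tau \;\leq\; \mu(u) \;\leq\; -\lambda_\varepsilon(\R) + \tau,
\end{equation*}
and under assumption \eqref{H:tau} this forces $\mu(u)<0$. Setting $\rho(u) := -\mu(u)>0$ and $n(u) := \rho(u)\phi_u$ turns any fixed point $u_\varepsilon = \phi_{u_\varepsilon}$ of the map $T : u \mapsto \phi_u$ into a positive solution of \eqref{eq:main} with $\rho_\varepsilon = -\mu(u_\varepsilon)\in [\lambda_\varepsilon(\R)-\tau,\lambda_\varepsilon(\R)+\tau]$.

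To actually produce such a fixed point, I would first truncate the problem to $\Omega_L = (-L, L)$ with homogeneous Dirichlet data. The analogous map $T_L$ sends the convex set of probability densities on $\Omega_L$ into itself and is continuous and compact by standard $H^1_0(\Omega_L)$-regularity combined with the compact embedding $H^1_0(\Omega_L)\hookrightarrow L^2(\Omega_L)$; Schauder's theorem then produces $u_\varepsilon^L$, and hence a positive truncated solution $n_\varepsilon^L = \rho_\varepsilon^L\, u_\varepsilon^L$. The passage to the limit $L\to\infty$ rests on Agmon-type exponential tail bounds on $n_\varepsilon^L$, uniform in $L$, exploiting that the effective potential $-R-\tau\, H*u_\varepsilon^L$ tends to $+\infty$ as $|z|\to\infty$. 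Combined with the $L$-uniform bounds on $\rho_\varepsilon^L$ inherited from the eigenvalue bracket above, these yield uniform $L^1$ and pointwise tail estimates, so that up to extraction $n_\varepsilon^L\to n_\varepsilon$ in $L^1(\R)$ to a nontrivial limit solving \eqref{eq:main}; strict positivity follows from the strong maximum principle.

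The remaining assertions then come by direct manipulation of the limiting equation. Integrating \eqref{eq:main} over $\R$, the second-derivative term vanishes by decay, and the double integral against $H$ vanishes by oddness of $H$, which gives \eqref{eq:mass_varepsilon}. Testing \eqref{eq:main} against $N_\varepsilon$ from \eqref{eq:eigenpair_whole_space}, integrating by parts twice and using $|H|\leq 1$, leads to the identity $(\rho_\varepsilon - \lambda_\varepsilon(\R))\int_\R n_\varepsilon N_\varepsilon = \tau\int_\R n_\varepsilon N_\varepsilon\, (H*\tfrac{n_\varepsilon}{\rho_\varepsilon})$, so that $|\rho_\varepsilon - \lambda_\varepsilon(\R)|\leq \tau$; combined with Lemma~\ref{lem:limit_eigenvalue} this yields \eqref{boundrho}. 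Finally, from \eqref{eq:mass_varepsilon} together with $R\leq 1$ on $D_R$ and $R\leq 0$ on $D_R^c$, one gets $\rho_\varepsilon^2 = \int_\R R\, n_\varepsilon \leq \int_{D_R} n_\varepsilon \leq |D_R|\,\max n_\varepsilon$; combining with the lower bound $\rho_\varepsilon\geq \lambda_\varepsilon(\R)-\tau$ and Lemma~\ref{lem:limit_eigenvalue} then delivers the announced lower bound on $\max n_\varepsilon$.

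The hardest point will be the passage $L\to\infty$: the nonlinear eigenvalue problem sits on all of $\R$ and the space of admissible densities lacks compactness under translations, so the Schauder argument on $\Omega_L$ alone does not transfer. The crux is to upgrade the soft $L^1$-compactness on bounded domains into a genuine $L$-uniform tail estimate through a supersolution/Agmon barrier built from the growth of $-R$ at infinity, rather than from the unperturbed eigenfunction $N_\varepsilon$ alone, since the nonlocal transfer term perturbs the potential in an $L$- and $u$-dependent way.
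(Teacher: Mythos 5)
Your proposal is correct in outline, but it reaches the result by a genuinely different route than the paper. The paper also truncates to Dirichlet problems \eqref{eq:problem_dirichlet} and passes to the limit $\Omega\to\R$, but on the bounded domain it runs a Leray--Schauder homotopy on the auxiliary \emph{linear} map \eqref{eq:topological_degree}, connecting at $\mu=0$ to the pure eigenvalue problem; the mass bounds needed to keep fixed points away from the boundary of the convex set are obtained by testing against the Dirichlet eigenfunction (lower bound) and against a concave cut-off (upper bound $\rho\leq 1+\tau$), see Proposition~\ref{prp:bounded_domain}. You instead encode the unknown mass spectrally: $\rho$ is minus the principal eigenvalue of the operator with potential $-(R+\tau H*u)$, and you apply Schauder to the normalized ground-state map $u\mapsto\phi_u$. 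This buys you \eqref{boundrho} essentially for free from the perturbation bracket $|\rho_\varepsilon-\lambda_\varepsilon(\R)|\leq\tau$ (your $N_\varepsilon$-testing identity on $\R$ is the whole-line analogue of the paper's bound \eqref{eq:bound_mass}), and it removes the need for the homotopy and the a priori $L^2$ box, at the cost of having to verify continuity/compactness of the eigenpair map (standard, but you gloss it; one still needs an $L^\infty$-versus-$L^1$ bound on $\phi_u$, which is where the paper's Harnack argument, Lemma~\ref{lem:upper_bound}, reappears). Your treatment of the limit $\Omega_L\to\R$ via uniform-in-$L$ supersolution/Agmon tail barriers is exactly the paper's Proposition~\ref{prp:lower_bound_dirichlet}, and you correctly identify it as the technical crux. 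Your derivations of \eqref{eq:mass_varepsilon} (oddness of $H$) coincide with the paper's. For the maximum bound your argument is actually more elementary than the paper's (which extracts it from the tail estimates): $\rho_\varepsilon^2=\int R\,n_\varepsilon\leq|D_R|\max n_\varepsilon$ gives $\max n_\varepsilon\geq(\lambda_\varepsilon(\R)-\tau)^2/|D_R|$. One small caveat: to convert this into the stated form $\tilde\delta(1-\tau)$ with $\tilde\delta$ independent of $\varepsilon$ you need a lower bound on $\lambda_\varepsilon(\R)$ of the order of $1$, which Lemma~\ref{lem:limit_eigenvalue} only provides for $\varepsilon$ small; this is the same implicit restriction ($\varepsilon<\varepsilon_0$) under which the paper's own tail-based argument operates, so it is not a real loss, but you should state it.
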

Note   that the eigenpair problem \eqref{eq:eigenpair_whole_space} is equivalent with
  problem~\eqref{eq:main} when   no horizontal transfer is considered (that is $\tau=0$). One can indeed construct a solution to \eqref{eq:main} in the case $\tau=0$ from the eigenpair $(\lambda_\varepsilon,N_\varepsilon)$  via the formula $n_\varepsilon(z)=\lambda_\varepsilon(R)\frac{N_\varepsilon(z)}{\int N_\varepsilon(z)dz}$, so that $\lambda_\varepsilon(\R)$ corresponds to the total population size  $\rho_\varepsilon$. We conclude that there exists a non-trivial positive solution to the problem without transfer if and only if $\lambda_\varepsilon(\R) > 0$. We also notice that 
	$$
	\left|\tau\displaystyle\int_\R \frac{n(y)}{\rho}H(z-y)\ {\rm dy}\right|\leq \tau.
	$$
 These properties allow us to prove the existence of a non-trivial solution under assumption \eqref{H:tau}.
 However,  we do not expect this condition to be sharp for the existence to hold, since numerical simulations suggest otherwise. 
 
 \subsubsection{Asymptotic behavior of the solution}
	We next study the asymptotic behavior of the solutions as the mutational effect $\varepsilon$ vanishes.  We expect the solution to concentrate, as the diffusion term vanishes, around certain dominant traits, forming Dirac's delta functions in the limit. In order to identify such singular limits, we use an approach based on Hamilton-Jacobi equations \cite{BarlesMirrahimiPerthame, BarlesPerthame2,DJMP}. The main ingredient in this approach is to perform a Hopf-Cole transformation:
 	\begin{equation}\label{eq:transformed_hopf_cole}
	u_\varepsilon(z):=\varepsilon\cdot \ln\big(n_\varepsilon(z)\big),
\end{equation}
 which allows to unfold the singularity of the problem. Indeed, while $n_\varepsilon$ tends, as $\varepsilon\to 0$, to a singular measure, $u_\varepsilon$ converges to a continuous function $u$ which solves a Hamilton-Jacobi equation. The main idea is then to first study the limit of $u_\varepsilon$ and next to use some information on the function $u$ to identify $n$. 
 We prove the following.

	\begin{thm}\label{thm:limit_epsilon}
		Let $\tau< 1$. As $\varepsilon\to 0$ and along subsequences,   $\rho_\varepsilon$ converges to a positive function $\rho_0$ and  $u_{\varepsilon }$ converges locally uniformly   to a continuous function $u$ that is semi-convex and a viscosity solution of
		\begin{equation}\label{eq:viscosity_u}
			\begin{cases}
				- ( u'(z))^2 = R(z) - \rho_0 +\Phi_0(z),\quad z\in\R,\\
				\max\limits_{z\in\R}u(z)=0,\\
			\end{cases}
		\end{equation}
		where $\Phi_0\in C^3:\R\to (-\tau,\tau)$ and
		$$
		 1-\tau\leq \rho_0\leq 1+\tau.
		$$
		Moreover, as $\varepsilon\to 0$ and along subsequences,  $n_\varepsilon$ converges to a measure $n$. When the limits are considered along the same subsequences, we have the following relations between $u$, $\Phi_0$ and $n$:
		  \begin{equation}
		\label{phi0}
	     \rho_0= \int_\R n(z)dz>0,\quad \Phi_0 (z)=\int \frac{n(y)}{\rho_0}H(z-y)dy,
		  \end{equation}
        \begin{equation}
        \label{inclusion}
        \supp n(z)\subseteq \{z\in\R : u(z)=0\}\subseteq \left\{z\in\R : R(z)-\rho_0+\Phi_0(z)=0\right\}.
        \end{equation}

	\end{thm}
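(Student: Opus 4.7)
The plan is to adapt the Hopf--Cole/Hamilton--Jacobi approach of Barles--Mirrahimi--Perthame to the present setting, accommodating the nonlocal transfer term. Setting
\begin{equation*}
\Phi_\varepsilon(z) := \tau\int_\R \frac{n_\varepsilon(y)}{\rho_\varepsilon}\,H(z-y)\,dy,
\end{equation*}
the bound $|H|\leq 1$ gives $|\Phi_\varepsilon|\leq \tau$, and since $H\in C^3$ with bounded derivatives by \eqref{eq:hypothesis_H}, differentiating under the integral sign against the probability density $n_\varepsilon/\rho_\varepsilon$ yields uniform $C^3$ bounds on $\Phi_\varepsilon$. Combined with Theorem \ref{thm:existence} and Lemma \ref{lem:limit_eigenvalue}, one extracts a subsequence along which $\rho_\varepsilon\to\rho_0\in[1-\tau,1+\tau]$ and, by Arzel\`a--Ascoli, $\Phi_\varepsilon\to\Phi_0$ locally uniformly in $C^2$, with $\Phi_0\in C^3(\R;(-\tau,\tau))$.

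The Hopf--Cole transform converts \eqref{eq:main} into
\begin{equation*}
-\varepsilon u_\varepsilon''(z)-\bigl(u_\varepsilon'(z)\bigr)^2 \;=\; R(z)-\rho_\varepsilon+\Phi_\varepsilon(z).
\end{equation*}
I would obtain local Lipschitz and uniform semi-convexity estimates $u_\varepsilon''\geq -C$ by a Bernstein-type argument: differentiate the equation twice to obtain an elliptic inequality for $w_\varepsilon := u_\varepsilon''$ driven by $-R''-\Phi_\varepsilon''$, which is locally bounded thanks to the $C^2$ control of $R$ and the $C^3$ control of $\Phi_\varepsilon$ just established, and then invoke a maximum principle on balls to bound $w_\varepsilon$ from below on compacts. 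The Lipschitz bound follows from $(u_\varepsilon')^2 = -R+\rho_\varepsilon-\Phi_\varepsilon-\varepsilon u_\varepsilon''$. The confining property \eqref{eq:hypothesis_R} makes $R-\rho_\varepsilon+\Phi_\varepsilon$ strictly negative outside a large compact, so $(u_\varepsilon')^2$ is bounded below there, forcing linear decay of $u_\varepsilon$ at infinity and hence tightness of $n_\varepsilon=e^{u_\varepsilon/\varepsilon}$. A Laplace-type asymptotic, using the mass bounds $0<1-\tau\leq\rho_\varepsilon\leq 1+\tau$ from \eqref{boundrho} together with the lower bound $\max n_\varepsilon\geq\tilde\delta(1-\tau)$ from Theorem \ref{thm:existence}, pins $\max_z u_\varepsilon\to 0$.

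Arzel\`a--Ascoli now extracts $u_\varepsilon\to u$ locally uniformly along a subsequence, with $u$ semi-convex and locally Lipschitz, and the half-relaxed limit method of Barles--Perthame passes the equation to the viscosity equation \eqref{eq:viscosity_u}; the normalization $\max u=0$ is inherited from the estimate on $\max u_\varepsilon$. On the measure side, weak-$*$ compactness of bounded measures with tightness yields $n_\varepsilon\rightharpoonup n$ up to subsequence, and $\rho_\varepsilon=\int n_\varepsilon\to\int n=\rho_0$; since $H$ is bounded continuous, passing to the limit in the convolution-like expression identifies $\Phi_0$ in accordance with \eqref{phi0}.

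For the support inclusions \eqref{inclusion}: if $u(z_0)<0$, then $u_\varepsilon<-\delta$ on a neighborhood of $z_0$ for small $\varepsilon$, so $n_\varepsilon=e^{u_\varepsilon/\varepsilon}\to 0$ uniformly there, giving $\supp n\subseteq\{u=0\}$. For the second inclusion, at any $z_0$ with $u(z_0)=0=\max u$, the viscosity subsolution property applied with a constant test function gives $R(z_0)-\rho_0+\Phi_0(z_0)\geq 0$, while $u\leq 0$ combined with the supersolution property at interior maxima forces the reverse inequality, so $R(z_0)-\rho_0+\Phi_0(z_0)=0$. \textbf{The main obstacle} is the uniform semi-convexity of $u_\varepsilon$ in the presence of the nonlocal term: the Bernstein argument demands a uniform bound on $\Phi_\varepsilon''$, which is precisely what the $C^3$ regularity of $H$ in \eqref{eq:hypothesis_H} provides by letting the derivatives fall on $H$ and integrating against the probability measure $n_\varepsilon/\rho_\varepsilon$.
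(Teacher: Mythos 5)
Your proposal follows essentially the same route as the paper's proof: Hopf--Cole transform, uniform bounds on $\Phi_\varepsilon$ and its derivatives obtained by letting derivatives fall on $H$, Bernstein-type Lipschitz and semi-convexity estimates for $u_\varepsilon$, Arzel\`a--Ascoli plus the Barles--Perthame stability argument to pass to the viscosity equation, the mass bounds to force $\max_z u=0$, and semi-convexity (hence differentiability at maximum points) to obtain the inclusions \eqref{inclusion}. The only deviations are cosmetic --- the order in which the gradient and semi-convexity bounds are derived, and using the upper and lower bounds on $\max n_\varepsilon$ rather than the exponential tail estimate to pin $\max u=0$ --- so the proposal is correct and essentially reproduces the paper's argument.
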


\subsubsection{The limit profile}

	
	Up to this point we developed the theory for a general confining term $R$. For the rest of the results we focus on the particular case
	$$
	R(z)=1-gz^2,
	$$
	which is derived from the more general term $R(z)=r-gz^2$ via the reparametrization mentioned earlier.

    In order to provide our qualitative results on the limit $n$, we need to introduce some definitions. We first introduce the following function 
    \[ F(z)  = 1 - gz^2 - \rho_0 + \tau\int_{\mathbb{R}} \frac{n(y)}{\rho_0} H(z-y)dy \qquad \text{ with } \rho_0 = \int_{\mathbb{R}} n(y)dy\]
    which corresponds to the r.h.s. of \eqref{eq:viscosity_u}. We will refer to $F(z)$ as the fitness function. We next define the notion of \textit{Evolutionary Stable Strategy}. 
    \begin{dfn}
    \label{definition:ESS}
        A phenotypic density $n$ corresponds to an \textit{Evolutionary Stable Strategy}, or \textit{ESS} to abbreviate, if the following conditions are satisfied.
	\begin{equation}\label{def:ESS}
		\left\lbrace 
		\begin{aligned}
			F(z) &\leq  0 && \text{ for all } z \in \mathbb{R}\setminus \mathrm{supp} \ n, \\
			F(z) & = 0 && \text{ for all } z \in \mathrm{supp} \ n.
		\end{aligned}\right.
	\end{equation}
  The support of $n$ is then called  the  \textit{Evolutionary Stable Strategy}. If this support is discrete we will talk about ESS points.
  Moreover, we will say that this ESS is $m-$morphic if 
	\[ \#\lbrace \mathrm{supp} \ n \rbrace = m. \]
	If $m=1$ we will speak of monomorphism, if $m=2$ of dimorphism and so on.
    \end{dfn}
    From \eqref{eq:viscosity_u} and \eqref{inclusion} we deduce the following. 
    \begin{prp}
    \label{prop-ESS}
	    Let $n_\varepsilon$ be a solution of~\eqref{eq:main} that converges, as $\e\to 0$ and along a subsequence, to a measure $n$. Then $n$ corresponds to an ESS. 
	\end{prp}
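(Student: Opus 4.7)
The plan is to deduce both ESS conditions directly from Theorem \ref{thm:limit_epsilon}, using the observation that for $R(z) = 1 - gz^2$ the fitness function $F$ coincides with the right-hand side of the limiting Hamilton--Jacobi equation \eqref{eq:viscosity_u}, namely $F(z) = R(z) - \rho_0 + \Phi_0(z)$. The equality $F \equiv 0$ on $\mathrm{supp}\, n$ is then immediate from the inclusion \eqref{inclusion}: indeed $\mathrm{supp}\, n \subseteq \{z \in \R : R(z) - \rho_0 + \Phi_0(z) = 0\} = \{F = 0\}$, so half of Definition \ref{definition:ESS} is already in hand.

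The substantive content is the inequality $F \leq 0$ on $\R \setminus \mathrm{supp}\, n$, and I would actually establish the slightly stronger statement $F(z) \leq 0$ for every $z \in \R$. For this I would exploit the semi-convexity of $u$ granted by Theorem \ref{thm:limit_epsilon}: a semi-convex function on $\R$ is differentiable everywhere except on an at most countable set $\mathcal{E}$, so at any $z_0 \in \R \setminus \mathcal{E}$ the classical derivative $u'(z_0)$ exists. A standard fact for viscosity solutions then yields that the equation \eqref{eq:viscosity_u} holds in the classical sense at such a point, giving
\[ F(z_0) = -(u'(z_0))^2 \leq 0. \]

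To pass from the dense set $\R \setminus \mathcal{E}$ to the whole line, I would invoke the continuity of $F$: by \eqref{eq:hypothesis_R} we have $R \in C^2$, and by Theorem \ref{thm:limit_epsilon} we have $\Phi_0 \in C^3$, so $F$ is continuous on $\R$. The inequality $F \leq 0$ therefore extends from $\R \setminus \mathcal{E}$ to every $z \in \R$, and combined with $F \equiv 0$ on $\mathrm{supp}\, n$ this is precisely the ESS property of Definition \ref{definition:ESS}.

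I do not anticipate any serious obstacle; the proposition is essentially a corollary of Theorem \ref{thm:limit_epsilon}, which carries all the analytical weight. The only point requiring a brief justification is the classical identity $-(u'(z_0))^2 = F(z_0)$ at a differentiability point of the viscosity solution $u$: at such a $z_0$, the linear function $z \mapsto u(z_0) + u'(z_0)(z - z_0)$ perturbed by $\pm C (z - z_0)^2$ provides valid $C^1$ test functions from both sides, so that the sub- and super-solution inequalities collapse into an equality.
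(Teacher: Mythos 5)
Your proof is correct and is essentially the paper's own argument---the paper simply observes that $F(z)=-|u'(z)|^2\leq 0$ together with $\supp n\subseteq\{z:F(z)=0\}$ from \eqref{inclusion}, while you supply the details (differentiability of the semi-convex $u$ outside an at most countable set, plus continuity of $F$) that the paper leaves implicit. One small remark: the touching-from-above test function $u(z_0)+u'(z_0)(z-z_0)+C(z-z_0)^2$ is not justified by differentiability alone, but it is also not needed, since semi-convexity yields a $C^1$ function touching $u$ from below at \emph{every} point, so the supersolution inequality already gives $F\leq 0$ on all of $\R$ without any exceptional set.
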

	Briefly, this is because to each $n$ obtained as a limit of $n_\e$ corresponds, by the Hopf-Cole transformation, one $u$ obtained as the limit of the $u_\e$ that satisfies~\eqref{eq:viscosity_u} and~\eqref{inclusion}. The fitness function $F(z)$ is precisely equal to $-|u'(z)|^2$.
 
   Note that the notion of \textit{Evolutionary Stable Strategy} is taken from the field of adaptive dynamics \cite{OD:04, SG.EK.GM.JM:98}, which focuses on a different time scale where the mutations are very rare so that they arise one by one and between two mutations the population attains its equilibrium. Here, we do not consider such a framework. However, when considering the steady solutions and vanishing mutational effects, we recover the evolutionary stable strategies of adaptive dynamics (see for instance \cite{SM:17,SG.SM:18} where such a property has been obtained in other contexts). 
   
	Finally, before showing the last main theorem, we define the quantity
	\begin{equation}\label{eq:mu}
	\mu:=\frac{\tau}{2g}.
	\end{equation}
	This ratio measures the interplay between the strength of selection and the nonlocal horizontal transfer, and it appears naturally in the identification of the evolutionary stable strategies.
	
	\begin{thm}\label{thm:main}
		  There exist positive constants $\mu_1$ and $\mu_2$ such that the following results hold.
		\begin{enumerate}
			\item There exists a unique monomorphic ESS if and only if $0 \leq \mu \leq \mu_1$ and $\tau<\frac{2}{\mu}$. 
			\item There exists a unique dimorphic ESS if and only if $\mu_1 < \mu \leq \mu_2$ and $\tau<\tau_2$, where $\tau_2$ is a positive value depending on $H$ and $\mu$.
		\end{enumerate}
		Moreover, the \textit{ESS} points, the values $\mu_1$ and $\mu_2$ and the associated demographic equilibria are fully characterized by $H$ and the parameters $\tau$ and $g$. 
	\end{thm}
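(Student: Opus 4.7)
Since $R(z)=1-gz^2$ is confining and $H\in C^3$, the fitness function $F$ is $C^2$ with $F(z)\to-\infty$ as $|z|\to\infty$, so its set of global maxima is finite. By Proposition~\ref{prop-ESS} and Definition~\ref{definition:ESS}, any ESS $n$ is supported on this finite set, and writing $n=\sum_{i=1}^m\alpha_i\delta_{z_i}$ the ESS characterisation becomes the finite system
\[ F(z_i)=0,\ \ F'(z_i)=0,\ \ \alpha_i>0\quad(i=1,\ldots,m),\qquad F\le 0\text{ on }\mathbb{R}. \]
The plan is to solve this system explicitly for $m=1,2$ and to rule out $m\ge 3$.

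\textbf{Monomorphic case.} For $n=\alpha\delta_{z_0}$, the conditions $F(z_0)=0$ and $F'(z_0)=0$, together with $H(0)=0$ and $H'(0)=1$, give the unique candidate
\[ z_0=\mu,\qquad \alpha=\rho_0=1-g\mu^2, \]
so $\alpha>0$ iff $\tau<2/\mu$. Substituting $w=z-\mu$ and using $\tau=2g\mu$ turns the global inequality $F\le 0$ into $\tau(H(w)-w)\le gw^2$ for all $w\in\mathbb{R}$; this is automatic on $[0,\infty)$ by concavity of $H$, and on $(-\infty,0)$ it reduces to
\[ \mu\le\mu_1:=\inf_{w<0}\frac{w^2}{2(H(w)-w)}, \]
a quantity depending only on $H$. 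Using~\eqref{eq:hypothesis_H}(3) I would show that this infimum is positive and attained at a unique $w^\star<0$.

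\textbf{Dimorphic case.} For $n=\alpha_1\delta_{z_1}+\alpha_2\delta_{z_2}$ with $z_1<z_2$, set $h:=z_2-z_1$ and $\beta_i:=\alpha_i/\rho_0$. Taking suitable sums and differences of $F(z_i)=F'(z_i)=0$ and using the parity of $H$ and $H'$ yields the decoupled system
\[ h\bigl(1+H'(h)\bigr)=2H(h),\quad z_1+z_2=\mu\bigl(1+H'(h)\bigr),\quad \beta_2-\beta_1=\frac{h}{\mu(1-H'(h))}. \]
The function $G(h):=h(1+H'(h))-2H(h)$ satisfies $G(0)=G'(0)=0$ and $G''(h)=hH'''(h)$; I would use~\eqref{eq:hypothesis_H}(3) to prove that $G$ admits a unique positive zero $h^\star$. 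The remaining unknowns are then explicit,
\[ z_{1,2}=\tfrac{1}{2}\mu(1+H'(h^\star))\mp\tfrac{h^\star}{2},\qquad \beta_{1,2}=\tfrac{1}{2}\mp\tfrac{h^\star}{2\mu(1-H'(h^\star))}, \]
and $\rho_0$ follows from $F(z_1)+F(z_2)=0$ as an explicit function of $\mu$, $g$ and $h^\star$. Positivity of the smaller weight $\beta_1$ is equivalent to $\mu>h^\star/(1-H'(h^\star))$; since $G$ is odd, the first-order condition defining $\mu_1$ in the previous step forces $w^\star=-h^\star$ and yields the clean identity
\[ \mu_1=\frac{h^\star}{1-H'(h^\star)}, \]
so that the monomorphic and dimorphic regimes meet exactly at $\mu=\mu_1$. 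Positivity of $\rho_0$ then produces the threshold $\tau<\tau_2(H,\mu)$, and $\mu_2$ is defined as the largest value of $\mu$ for which no additional critical point of $F$ reaches the value~$0$.

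\textbf{Higher multiplicities and main obstacle.} To exclude $m\ge 3$ and pin down $\mu_2$ rigorously, I would count the critical points of $F$ by tracking sign changes of $F''$, themselves controlled by~\eqref{eq:hypothesis_H}(3): each shifted copy of $H$ contributes a bounded number of inflection points and the concave quadratic $-gz^2$ caps the total, so $F$ has at most a small finite number of local maxima in the parameter range of interest. Monitoring the value of $F$ at these candidate maxima as $\mu$ grows defines $\mu_2$ explicitly. The crux of the proof is this global sign control in the dimorphic step: although the candidate $(z_{1,2},\beta_{1,2},\rho_0)$ is fully explicit, one must prove simultaneously that positivity holds and that no third point of $\mathbb{R}$ allows $F$ to rise back to $0$. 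The monomorphic case reduces to a one-variable inequality and is essentially immediate, whereas the dimorphic case forces the combined thresholds $(\mu_2,\tau_2)$ to arise as non-trivial but explicit quantities determined by $H$.
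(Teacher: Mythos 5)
Your treatment of parts of the statement follows essentially the paper's route: the monomorphic candidate $z_0=\mu$, $\rho_0=1-g\mu^2=1-\tau\mu/2$, the dimorphic reduction to the scalar equation $h\,(1+H'(h))=2H(h)$ (the paper's Lemma~\ref{lem:d1}, giving $h=d_1$), the weights $\beta_{1,2}=\tfrac12\mp\tfrac{\mu_1}{2\mu}$ and the inter-peak distance $d_1$, with $\rho_0$ then determined linearly. Your variational characterization $\mu_1=\inf_{w<0}\frac{w^2}{2(H(w)-w)}$ is a nice equivalent repackaging of the paper's $\mu_1=d_1/(1-H'(d_1))$: the first-order condition of that infimum is precisely $s(1+H'(s))=2H(s)$, and at $s=d_1$ one has $2(d_1-H(d_1))=d_1(1-H'(d_1))$, so the two definitions coincide; this gives the "only if" of part 1 somewhat more directly than the paper's argument via $J_{1,\mu}$ and $F(\mu-d_1)>0$.

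The genuine gap is in part 2, at the step you yourself call the crux. To prove that the dimorphic candidate fails to be an ESS for \emph{all} $\mu>\mu_2$ (and hence that the dimorphic regime is exactly the interval $(\mu_1,\mu_2]$), one must control the sign of the fitness $J_{2,\mu}$ of \eqref{eq:F_2_mu} at the third critical point $z_3$ as $\mu$ increases past $\mu_2$. Your plan — counting inflection points of $F$ via hypothesis~\eqref{eq:hypothesis_H}(3) and "monitoring" the values at the candidate maxima — only bounds the number of local maxima; it gives no reason why $J_{2,\mu}(z_3)$, once it reaches $0$ at $\mu=\mu_2$, cannot dip back below $0$ for larger $\mu$, in which case the set of $\mu$ admitting a dimorphic ESS would not be an interval and the theorem's "if and only if" would fail as you argue it. The paper closes exactly this hole by \emph{assuming} the extra hypothesis~\eqref{eq:extra_hypothesis_H}, namely $\partial_\mu J_{2,\mu}(z_3)>0$ for $\mu\ge\mu_2$, which it verifies explicitly only for particular kernels such as $H=\tanh$; the statement of the theorem is explicitly made under that assumption, and no proof is known from \eqref{eq:hypothesis_H} alone. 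Related smaller omissions: you do not check that $\mu_2>\mu_1$ (the paper does this via strict concavity of $J_{2,\mu_1}$ at its two maxima, using $G'(d_1)<0$), nor that $\mu_2<\infty$. Finally, excluding $m\ge 3$ is not actually needed: the theorem asserts uniqueness of the monomorphic/dimorphic ESS in the stated parameter ranges, not that no other ESS exists, as the paper itself remarks.
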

	We have to precise that the last affirmation regarding the dimorphic case is proven assuming one extra hypothesis on $H$ that will be presented in due time, in Section \ref{sec:dimo}, and that, while hard to write down for a general transfer kernel, is quite easy to verify for a particular choice of $H$. 
	
The theorem above provides the range of parameters for which monomorphic and dimorphic ESS exist. Note however that the theorem does not guarantee that with this range of parameters these are the only possible evolutionary stable strategies. One could wonder whether the   solution of \eqref{steady} is indeed close to such monomorphic or dimorphic ESS. To test this hypothesis, we solved numerically a time dependent version of ~\eqref{eq:main},  that is 
\begin{equation}
\label{eq:time} 
	\begin{cases}
		\varepsilon\p_t n_\e(t,z)	-\varepsilon^2 \p^2_{zz}n_\varepsilon (t,z)=\left(R(z) -  \rho_\varepsilon(t) \right) n_\varepsilon(t,z) +  \tau\cdot n_\varepsilon( z)\displaystyle\int_\R \frac{ n_\varepsilon( t,y)}{ \rho_\varepsilon}H( z - y)\ {\rm dy}\\
			n_\varepsilon( 0,z)=n_{\varepsilon,0}(z),\\
			\rho_\varepsilon(t)=\displaystyle\int_\R n_\varepsilon(t,y)\ {\rm dy},
	\end{cases}
\end{equation}
and studied the long time solution considering the following particular form of transfer function 
$$
H(z)=\tanh(z).
$$
Note that the $\e$ in front of $\p_t n_\e$ in \eqref{eq:time} comes from a change of variable in time $t\to \frac t\e$. This is a classical change of variable in such type of models \cite{BarlesMirrahimiPerthame, BarlesPerthame1, BarlesPerthame2}. Indeed since the mutational effects are supposed to be small, the evolutionary dynamics are expected to be slow. This change of variable allows to capture the  dynamics  of the phenotypic density taking into account the small effects of the mutations.

In Figure~\ref{fig:mono}, we illustrate the numerical solution of  \eqref{eq:time}  with a choice of parameters such that $\mu<\mu_1$. As we can see in Figure~\ref{fig:mono},  the solution concentrates around a trait that travels to higher values as times goes by and eventually converges to a concentrated distribution around  $z_0$. The total population size converges quite smoothly to the theoretical expected value and the solution $n_\varepsilon$ at the final time resembles a Dirac's delta.  The numerical scheme that we used is an adaptation of the scheme developed in \cite{CalvezHivertYoldacs, CalvezFigueroaAl} and we expect it to be asymptotic preserving and hence adapted to deal with small values of $\e$. See Appendix \ref{section:annex} for details. All the pictures present in this article come from numerical simulation done in the interval $[-2,6]$ for the trait variable, with step sizes $\Delta t=10^{-4}$ and $ \Delta z=10^{-2}$.
	\begin{figure}[H]
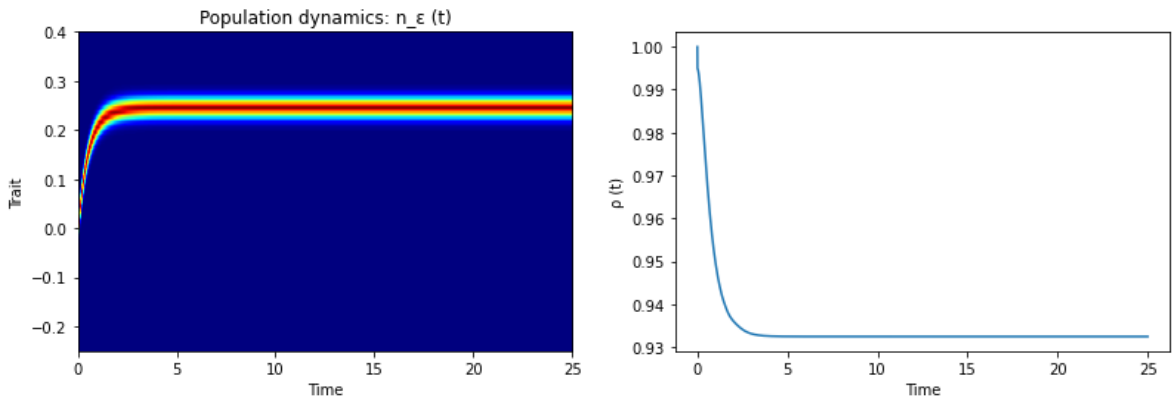
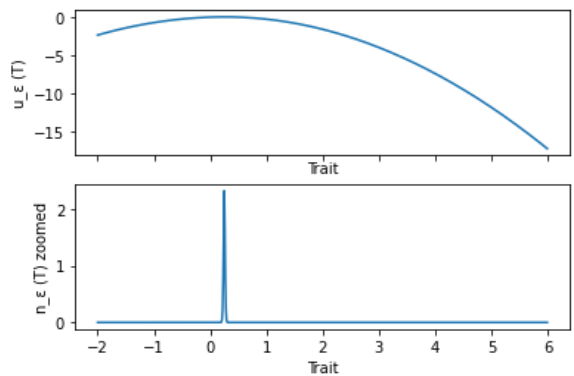
\centering
        \renewcommand\thesubfigure{\Alph{subfigure}}
		\subfloat[Function $n_\varepsilon(t,z)$]{ \includegraphics[scale = 0.55]{Mono_1.png}}
        \subfloat[Function $\rho(t)$]{ \includegraphics[scale = 0.55]{Mono_2.png}}\\[10pt]
		\subfloat[ $u_\varepsilon$ and $n_\varepsilon$ (rescaled) at the final time]{ \includegraphics[scale = 0.55]{Mono_3.png}}
		\caption{Solution of~\eqref{eq:time} in the monomorphic case $\mu=0.25$, with values $\tau=0.5$, $g=1$ and $\varepsilon=5\cdot 10^{-5}$. In picture (A) the colors correspond to the isolines of the phenotypic distribution. In picture (C) the function $n_\varepsilon$ is rescaled in order to better appreciate other possible maxima.}
        \label{fig:mono}
	\end{figure}
		In Figure~\ref{fig:di} we illustrate a second example with $\mu_1<\mu<\mu_2$ such that we expect dimorphism. We can observe that 
		the solution concentrates first around a dominant trait that is driven by horizontal transfer to larger values. At a certain time, the dominant trait becomes too unfit (with a small growth rate $R$) such that 
		the population size drops. Then, some fitter traits emerge and start traveling again to larger values. The interplay between horizontal transfer and selection produces several  jumps back and forth of the solution up until one point when it stabilises by reaching two maxima, as appreciated in the picture of $n_\varepsilon$ at the final time. As we can also see, the mass $\rho(t)$ does not converge so smoothly but oscillates quite wildly (as a consequence of this process of stepwise evolution) before finally converging.
	\begin{figure}[H]
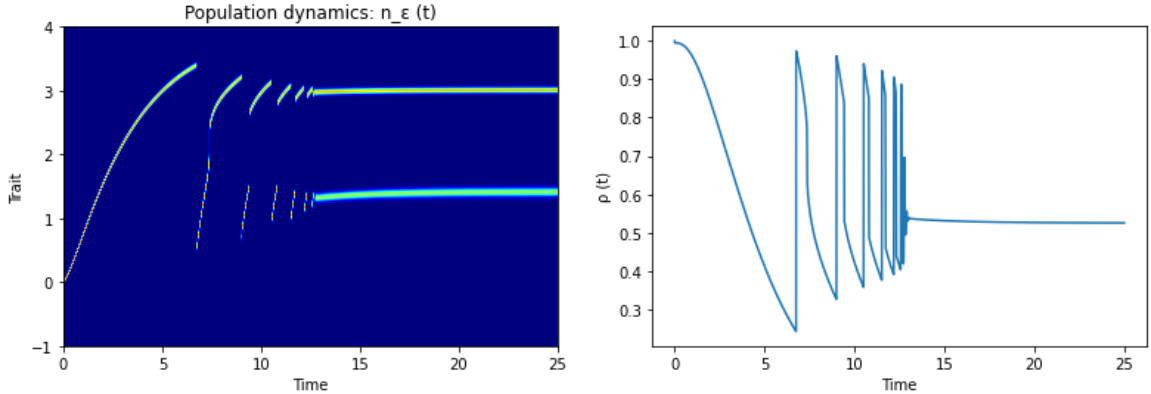
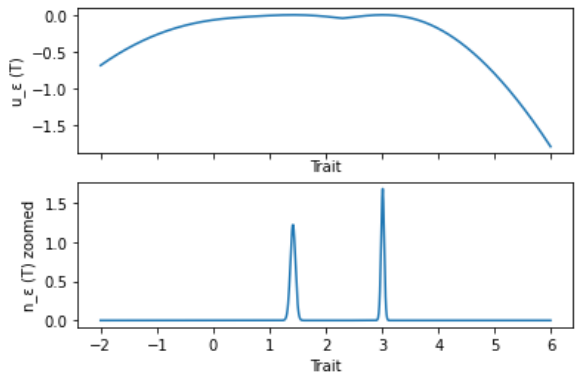
\centering
        \renewcommand\thesubfigure{\Alph{subfigure}}
		\subfloat[\label{A}][Function $n_\varepsilon(t,z)$]{ \includegraphics[scale = 0.55]{Di_1.png}}
        \subfloat[\label{B}][Function $\rho(t)$]{ \includegraphics[scale = 0.55]{Di_2.png}}\\[10pt]
		\subfloat[\label{C}][$u_\varepsilon$ and $n_\varepsilon$ (rescaled) at the final time]{ \includegraphics[scale = 0.55]{Di_3.png}}
		\caption{Solution of~\eqref{eq:time} in the dimorphic case $\mu=3.84$, with values $\tau=0.5$, $g=0.065$ and $\varepsilon=5\cdot 10^{-5}$.}
        \label{fig:di}
	\end{figure}

%
	
	One could wonder what would be the shape of the long-time solution when $\mu>\mu_2$. We believe that polymorphism goes on as $\mu$ increases, giving rise to the following conjecture.
	
	\noindent\textbf{Conjecture 1:} \textit{ There exists a strictly increasing sequence of values  $\mu_m \geq 0$ such that there is an $m-$morphic ESS if and only if } \[\mu_{m-1} < \mu \leq \mu_m\quad\text{and}\quad \tau<\tau_\mu \]
	where $\tau_\mu$ is a positive value depending only on $H$ and $\mu$.
	
	To illustrate this point we have dedicated some time to study the trimorphic case from a theoretical point of view, but while the method we present scales nicely with the cardinality of $\supp n$, the analysis becomes at this point too complex to be solved theoretically. We had to resort to numerical simulations in an effort to characterize at least the trimorphic case for the particular case where $H(z)=\tanh(z)$ (see Section \ref{sec:trimo}). In Figure~\ref{fig:tri} we illustrate a situation with $\mu=5$ where, thanks to  the numerical study in Section \ref{sec:trimo}, we expect  trimorphism. We observe in Figure~\ref{fig:tri} that the numerical solution of \eqref{eq:main} indeed converges to the expected trimorphic ESS, matching the values for $\rho$ and the points of the ESS calculated for the case $\mu=5$ in Table~\ref{table-tri}. 
	
	\begin{figure}[H]
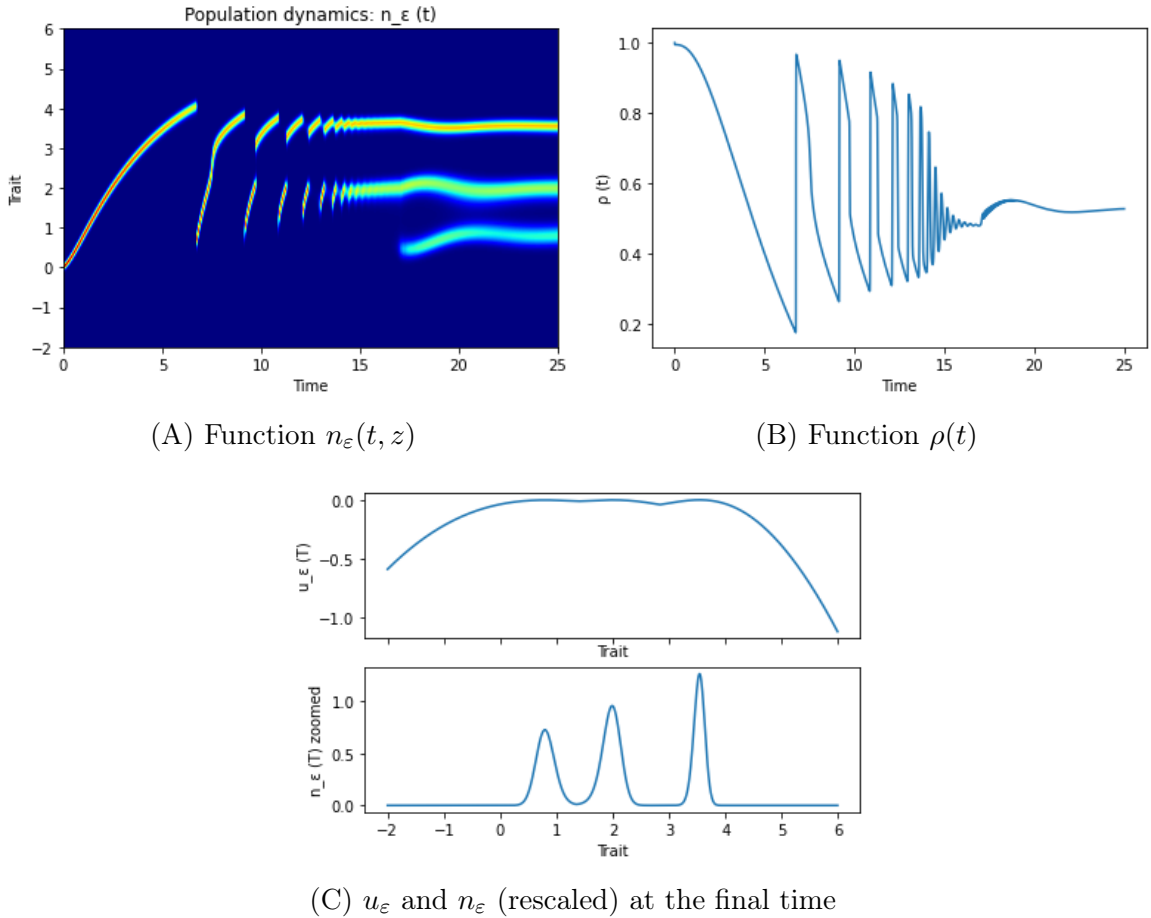
\centering
        \renewcommand\thesubfigure{\Alph{subfigure}}
		\subfloat[\label{A}][Function $n_\varepsilon(t,z)$]{ \includegraphics[scale = 0.55]{Tri_1.png}}
        \subfloat[\label{B}][Function $\rho(t)$]{ \includegraphics[scale = 0.55]{Tri_2.png}}\\[10pt]
		\subfloat[\label{C}][ $u_\varepsilon$ and $n_\varepsilon$ (rescaled) at the final time]{ \includegraphics[scale = 0.55]{Tri_3.png}}
		\caption{Solution of~\eqref{eq:time} in the trimorphic case $\mu=5$, with values $\tau=0.5$, $g=0.05$ and $\varepsilon=5\cdot 10^{-4}$.}
        \label{fig:tri}
	\end{figure}
	
	\subsection{Comments and comparison with the previous results}

 Our results differ in two ways with previous numerical results in \cite{BilliardFerriereMeleardTran,CalvezFigueroaAl}. First, our work illustrates theoretically and numerically that polymorphic phenotypic distributions emerge as a result of the trade-off between selection and horizontal transfer. Such polymorphic distributions were not observed in \cite{BilliardFerriereMeleardTran,CalvezFigueroaAl}. Second, the numerical results in \cite{BilliardFerriereMeleardTran,CalvezFigueroaAl} suggested that when the dominant trait does not converge to a fixed value, the phenotypic density would have a cyclic behavior. The phenotypic density can indeed be driven to unfit traits due to horizontal transfer, and then have a jump to a fitter trait that evolves again to unfit traits and this behavior occurs repeatedly, suggesting a periodic behavior of the solution in long time. While we also observe in our numerical results such a stepwise evolution, this behavior seems transitory before the convergence of the solution to a polymorphic state (see Figures 2 and 3). Note however that the numerical simulations in \cite{BilliardFerriereMeleardTran,CalvezFigueroaAl} where run for a short time. Hence it is possible that in those simulations also the cyclic behavior would be transitory.

Let us provide some details that would help to understand the differences between these works. In \cite{CalvezFigueroaAl} an equation  closely related  to \eqref{eq:time_dependant_main} was studied numerically, considering small mutational effects. In that article, an integral term was used to model the mutations instead of the diffusion term in the present work. We believe that this choice of mutation term would not effect the asymptotic shape of the solutions since similar analysis may be done, at least formally, in the case of a mutation kernel. A more important difference between these works is that the authors of \cite{CalvezFigueroaAl} consider a rather large steepness parameter $K$  in order to approximate a Heaviside transfer kernel, and they do not rescale the problem as we did in Section \ref{subsec:change_of_variables}.  If we undo the change of variables of Section~\ref{subsec:change_of_variables} we find that
	$$
	\mu=K^2 \frac{\tau}{2g}.
	$$
	Increasing $K$ in order to approximate a Heaviside-like transfer kernel would result in a very big $\mu$ while at the same time we are rescaling the trait space by $z=\frac{\tilde z}{K}$, bringing the points close to the origin. If we take Conjecture 1 as true, this leads consequently to $m$-morphism with a very large $m$ with the dominant traits being possibly very close to each other. This is therefore very difficult to capture numerically. Moreover, when $\e>0$ is large with respect to the distances between the dominant traits, the mutations may flatten the solution leading to a smooth unimodal distribution.
 
	
In the stochastic simulations in \cite{BilliardFerriereMeleardTran,CalvezFigueroaAl}, a Heaviside function was used for the transfer kernel. As discussed in the previous paragraphs, in our framework this would correspond to a degenerate case with $K\to \infty$ and it is difficult to compare our results with the numerical simulations in this case. Note also that in \cite{BilliardFerriereMeleardTran} an affine function was considered for the growth rate $R$ instead of a quadratic term in our work. Our investigations seem to indicate that this choice of growth rate would not modify significantly our qualitative results (results not shown).


	\subsection{Organization of the article}
	 
	 In Section \ref{sec:dirichlet} we study the existence of solutions and prove  Lemma \ref{lem:limit_eigenvalue} and Theorem \ref{thm:existence}. Section~\ref{section:limit_varepsilon} is devoted to the study of the vanishing diffusion limit and the proof of Theorem~\ref{thm:limit_epsilon}. In Section~\ref{section:limit_profile} we study the limit profiles and the monomorphic and dimorphic cases in detail, proving Theorem~\ref{thm:main}. Finally, there is a supporting section,  Annex~\ref{section:annex}, where we discuss the numerical schemes utilized to obtain the results presents in the numerical simulations.
	
	\section{Existence of solutions}
	\label{sec:dirichlet}

   In this section, we prove Lemma \ref{lem:limit_eigenvalue} and Theorem \ref{thm:existence}.
   
   \subsection{The proof of Lemma  \ref{lem:limit_eigenvalue}}


%
%
		
	
	Formula~\eqref{eq:eigenvalue} shows that if $\varepsilon_1>\varepsilon_2$ then $\lambda_{\varepsilon_1}(\R)\leq \lambda_{\varepsilon_2}(\R)$ and thus the following quantity
		$$
		\lim\limits_{\varepsilon\to 0}\lambda_{\varepsilon}(\R)
		$$
		is well defined. Let us call this limit $\lambda_0$ for the time being.

		
		Next, we can perform the Hopf-Cole transformation
		$$
		N_\varepsilon(z)=\exp\left(\frac{V_\varepsilon(z)}{\varepsilon}\right)
		$$
		on equation~\eqref{eq:eigenpair_whole_space} to arrive to an equation of the form
		$$
		-\varepsilon V''_\varepsilon(z) - ( V'_\varepsilon(z))^2 = R(z) -\lambda_\varepsilon(\R),\quad z\in\R.
		$$
		By arguments similar to the ones in Section~\ref{section:limit_varepsilon} we can take the limit as $\varepsilon\to 0$ to arrive to the problem
		$$
		\begin{cases}
			- ( V'_\varepsilon(z))^2  = R(z) - \lambda_0,\quad z\in\R,\\
			\max\limits_{z\in\R}V(z)=0.
		\end{cases}
		$$
		This already implies that $\lambda_0\geq R(z)$ for all $z\in\R$, but since a maximum is attained there must be a point where $\lambda_0= R(z)$. This two conditions can only be satisfied if $\lambda_0=1$. Since $\lambda_\varepsilon(\R)$ was increasing as $\varepsilon\to 0$ we conclude that $\lambda_\varepsilon(\R)\leq 1$.
		
	
%
%
	
	\subsection{The proof of Theorem \ref{thm:existence}}
	
	In order to study the problem in the whole space we will begin by focusing on the Dirichlet problem posed in a bounded domain $\Omega$. The problem reads
	\begin{equation}\label{eq:problem_dirichlet}
		\begin{cases}
			-\varepsilon^2 n''(z) = n(z)[R(z) - \rho(n)] + \tau n(z)\displaystyle\int_\Omega \frac{n(y)}{\rho}H(z-y)\ {\rm dy},\quad z\in\Omega,\\
			n(z)=0,\quad z\in\partial \Omega,\\
			n(z)>0,\quad z\in \Omega,\\
			\rho(n) = \displaystyle\int_\Omega n(z){\rm dz}.
		\end{cases}
	\end{equation}
	Note that, by assumption \eqref{eq:hypothesis_R}, $R\in C(\mathbb{R})$ is such that that there exists a bounded domain $ D_\tau\subset\mathbb{R}$ such that  $R(z)< -\tau$ for all $z\in D_\tau^c$. From now on, in these notes, we will consider only bounded intervals $\Omega$ such that for all $x\in D_\tau$, $x\pm 1\in \Omega$,  unless specified otherwise, though some of the results presented here are true for more general domains. Since we will make $\Omega\to\R$ this will suppose no limitation in our final result.
	
	It will also be convenient to study the eigenvalue problem in a bounded domain with Dirichlet condition:
	\begin{equation}\label{eq:eigenpair}
		\begin{cases}
			-\varepsilon^2  N''_\varepsilon(z) - R(z)N_\varepsilon(z) = -\lambda_\varepsilon(\Omega) N(z),\quad z\in\Omega\\
			N_\varepsilon(z)>0, \quad z\in \Omega\\
			N_\varepsilon(z)=0, \quad z\in\partial \Omega	.	
		\end{cases}
	\end{equation}
	We write $\lambda_\varepsilon(\Omega)$ to highlight the dependence on the domain of the eigenvalue. We remark that $\lambda_\varepsilon(\Omega)$ is increasing with respect to $\Omega$, and also that the eigenvalue $\lambda_\varepsilon(\Omega)$ can be characterized by Rayleigh quotients as
	\begin{equation}\label{eq:eigenvalue_dirichlet}
		-\lambda_\varepsilon(\Omega) = \min\limits_{u\in H^1_0(\Omega), u\neq 0}\frac{1}{\|u\|^2_{L^2(\Omega)}}\left\{ \int_\Omega\varepsilon^2 (u')^2- Ru^2 {\rm dz}\right\}.
	\end{equation}

	We will prove the existence of solutions via the Leray-Schauder's Theorem.\\ Let
	$$
	\rho(m):= \int_\Omega m,\qquad\phi(m):= \tau\displaystyle\int_\Omega \frac{m(y)}{\rho(m)}H(z-y)\ {\rm dy}\in [-\tau, \tau]
	$$
	and, for $\mu\in(0,1)$ and $m \in L^2(\Omega)$, we define the operator $\Gamma:(0,1)\times L^2(\Omega)\to L^2(\Omega)$ by $\Gamma(m,\mu)=n$ with $n$ the solution to the following problem
	\begin{equation}\label{eq:topological_degree}
		\begin{cases}
			-\varepsilon^2 n'' - Rn + n + \rho(m)n=  m + \mu\phi(m)m,\quad z\in\Omega\\
			n(z)=0,\quad z\in\partial \Omega.
		\end{cases}
	\end{equation}
Recall that $L^2(\Omega)\subset L^1(\Omega)$.    Note that by classic theory, $\Gamma$ is a continuous and compact operator.   Finally, let us highlight that $n$ satisfies
	$$
	-\varepsilon^2 n'' +a(z)n\geq 0 \text{ with }a(z)=-R(z)+1+\rho(m)\geq 0
	$$
	whenever $m\geq 0$, since $\tau< \lambda_\e(\Omega)\leq 1$. Therefore,  the maximum principle ensures that $n\geq 0$.
	
	Before showing existence of solutions via a topological degree argument, let us show that any fixed point of the operator $\Gamma$ satisfies an upper bound that will help us obtain $L^2$ norms from bounds for $\rho(n)$. 
	Let us define $z_0$ as $n(z_0)=\max\limits_{z\in\Omega} n(z)$.
	
	\begin{lem}\label{lem:upper_bound}
		The maximum of  any fixed point $n$ of $\Gamma$ is attained at a point $z_0\in D_\tau$. Moreover, we have that
		$$
		n(z_0)=\max\limits_{z\in\Omega} n(z)\leq \delta_2\rho(n)
		$$
		for all $\Omega$, where $\delta_2$ is a positive constant depending only on $\textcolor{red}{\e}$, the function $R$ and the constant $\tau$.
	\end{lem}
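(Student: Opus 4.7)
My plan is to first rewrite the fixed-point equation $\Gamma(n,\mu)=n$ in the form
\[ -\e^2 n''(z) = n(z)\bigl[R(z)-\rho(n)+\mu\phi(n)(z)\bigr], \quad z\in\Omega, \qquad n|_{\partial\Omega}=0, \]
which is immediate from \eqref{eq:topological_degree}. Assuming $n\not\equiv 0$ (the bound being trivial otherwise), the strong maximum principle applied to the coercive operator in \eqref{eq:topological_degree} yields $n>0$ in $\Omega$, so the maximum $M:=n(z_0)$ is attained at an interior point $z_0$. To prove $z_0\in D_\tau$, I would evaluate the equation at $z_0$: since $n'(z_0)=0$ and $n''(z_0)\leq 0$, the bracket must be nonnegative. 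If $z_0\notin D_\tau$, then $R(z_0)<-\tau$ by definition of $D_\tau$, and combined with $|\mu\phi(n)(z_0)|\leq\tau$ and $\rho(n)>0$ this would force $R(z_0)-\rho(n)+\mu\phi(n)(z_0)<-\rho(n)\leq 0$, a contradiction.

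Next, for the pointwise bound, I would first derive the a priori estimate $\rho(n)\leq 1+\tau$ by integrating the equation over $\Omega$: integration by parts uses $n|_{\partial\Omega}=0$ and the inward-pointing sign of $n'$ at the boundary to show $\int_\Omega(R-\rho(n)+\mu\phi(n))n\,dz\geq 0$, and combined with $R\leq 1$ and $|\phi(n)|\leq\tau$ this yields $(1+\mu\tau)\rho(n)\geq\rho(n)^2$, hence $\rho(n)\leq 1+\tau$. On the $1$-neighborhood $U$ of $D_\tau$, which is contained in $\Omega$ by the standing assumption $D_\tau\pm 1\subset\Omega$, the coefficient in the equation is then bounded by a constant $C_0$ depending only on $R$ and $\tau$. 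The equation accordingly gives $|n''(z)|\leq C_0 M/\e^2$ throughout $U$.

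The conclusion will follow from a second-order Taylor expansion of $n$ at $z_0$. Using $n'(z_0)=0$ together with the bound on $n''$:
\[ n(z) \geq M - \frac{C_0 M}{2\e^2}(z-z_0)^2 \geq \frac{M}{2} \qquad \text{for}\quad |z-z_0|\leq r_\e := \min\!\Bigl(\tfrac{\e}{\sqrt{C_0}},1\Bigr), \]
the truncation by $1$ ensuring that the interval lies inside $\Omega$. Integrating the pointwise lower bound yields $\rho(n)\geq M\cdot r_\e$, and therefore $M\leq r_\e^{-1}\rho(n)$, which is the desired bound with $\delta_2:=r_\e^{-1}$. The main subtlety is keeping the Taylor neighborhood inside $\Omega$ uniformly in $\Omega$ — this is precisely the purpose of the assumption $D_\tau\pm 1\subset\Omega$, and it is what makes the resulting $\delta_2$ depend only on $\e$, $R$, and $\tau$ as claimed.
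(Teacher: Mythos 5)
Your proof is correct, and while the localization of $z_0$ is the same argument as in the paper (evaluate the equation at the interior maximum, use $n''(z_0)\le 0$, $n(z_0)>0$ and $|\mu\phi(n)|\le\tau$ to force $R(z_0)\ge-\tau$), your derivation of the bound $n(z_0)\le\delta_2\rho(n)$ takes a genuinely different route. The paper invokes the Harnack inequality on the ball $B_1(z_0)\subset\Omega$: with $c\,n(z_0)\le n(z)$ on $B_1(z_0)$, integrating immediately gives $n(z_0)\le\rho(n)/(2c)$, the Harnack constant $c$ depending on $\e$, $R$ and $\tau$ through the bound on the zeroth-order coefficient. You instead bound the coefficient $R-\rho(n)+\mu\phi(n)$ on a fixed neighborhood of $D_\tau$ (using an a priori bound on $\rho(n)$ obtained by integrating the equation — in fact only $\rho(n)\ge 0$ is needed there, since one only requires a lower bound on $n''$, but your $\rho(n)\le 1+\tau$ step is itself correct and is anyway proved later in the paper by a concave test-function argument), and then run a second-order Taylor expansion at $z_0$ to get $n\ge n(z_0)/2$ on an interval of explicit radius $r_\e=\min(\e/\sqrt{C_0},1)$, whence $\rho(n)\ge r_\e\,n(z_0)$. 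Your argument is more elementary and self-contained (no Harnack citation) and produces an explicit constant $\delta_2=r_\e^{-1}$, while the paper's is shorter; both yield, as they must, a constant degenerating like $\e^{-1}$ as $\e\to 0$, consistent with the stated dependence of $\delta_2$ on $\e$, $R$ and $\tau$, and both rely on the standing assumption that the unit neighborhood of $D_\tau$ lies in $\Omega$ so that the estimate is uniform in $\Omega$.
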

	\begin{proof}

		Let us see first that, under the assumptions on $R$, $z_0$ is localized for all $\Omega$.  We have that $ n''(z_0)\leq 0, n(z_0)\geq 0$ and thus, since $n$ is a fixed point of $\Gamma$,
		\begin{equation}\label{eq:z_0}
			R(z_0)-\rho(n) + \mu\tau\displaystyle\int_\Omega \frac{n(y)}{\rho(n)}H(z_0-y)\ {\rm dy}\geq 0
		\end{equation}
		which implies that $R(z_0)\geq \rho(n)-\mu\tau>-\tau$ and therefore $z_0$ must be contained in the domain $D_\tau$. This means that for all $\Omega\subset\mathbb{R}$ we have that $z_0\in D_\tau$.
		
		Next we shall find an upper bound for our fixed point. We notice that by Harnack inequality, there exists a constant $c$ such that $c n(z_0)\leq n(z)$ for all $z \in B_1(z_0)$ and $B_1(z_0)\subset\Omega$. Therefore
		$$
		n(z_0)=\int_{z_0-1}^{z_0+1}\frac{n(z_0)}{2}{\rm dz} \leq \int_{z_0-1}^{z_0+1}\frac{n(z)}{2c}{\rm dz}\leq \frac{\rho(n)}{2c}
		$$
		and thus $n(z_0)\leq \rho(n)/\delta$ with $\delta=2c$.
	\end{proof}
	
	Now we have the tools to show the existence of a solution to our problem.

	\begin{prp}\label{prp:bounded_domain}
		For all $\tau<\lambda_\varepsilon(\Omega) $ there exists a non-negative solution $n$ to problem~\eqref{eq:problem_dirichlet} satisfying
		\begin{equation}
		\label{ineq:lambdarho}
		0<\lambda_\varepsilon(\Omega)-\tau \leq \rho(n)\leq 1+\tau 
		\end{equation}
		for all $\Omega\subset\R$.
	\end{prp}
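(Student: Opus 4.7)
The plan is to apply the Leray--Schauder topological degree to the compact operator $\Gamma(\cdot,\mu):L^2(\Omega)\to L^2(\Omega)$ introduced in \eqref{eq:topological_degree}, using $\mu\in[0,1]$ as the homotopy parameter, and to produce a non-trivial fixed point of $\Gamma(\cdot,1)$, which solves \eqref{eq:problem_dirichlet}. Compactness and continuity of $\Gamma$ follow from standard elliptic regularity: the inverse of $-\e^2\partial^2_{zz}+a(z)$ with $a(z)=-R(z)+1+\rho(m)\geq 0$ maps $L^2(\Omega)$ continuously into $H^2(\Omega)$, and the embedding $H^2\hookrightarrow L^2$ on a bounded interval is compact. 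The maximum principle already noted in the text guarantees that $\Gamma$ preserves the cone of non-negative functions, so I work throughout in this cone.

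The first key step is a mass a priori bound that is uniform in $\mu$. For any non-trivial non-negative fixed point $n$ of $\Gamma(\cdot,\mu)$, I would test the fixed-point equation
\[
-\e^2 n''(z)=\bigl(R(z)-\rho(n)\bigr)n(z)+\mu\, n(z)\phi(n)(z)
\]
against the Dirichlet eigenfunction $N_\varepsilon$ of \eqref{eq:eigenpair}, integrate by parts using the zero boundary condition, and use $-\e^2 N_\varepsilon''=(R-\lambda_\varepsilon(\Omega))N_\varepsilon$ to collapse the $R$ terms. The result is the identity
\[
\bigl(\rho(n)-\lambda_\varepsilon(\Omega)\bigr)\int_\Omega nN_\varepsilon\,\mathrm{d}z\;=\;\mu\int_\Omega nN_\varepsilon\,\phi(n)\,\mathrm{d}z.
\]
Since $nN_\varepsilon>0$ on $\Omega$ and $|\phi(n)|\leq \tau$, dividing gives $|\rho(n)-\lambda_\varepsilon(\Omega)|\leq \mu\tau\leq \tau$, which yields exactly \eqref{ineq:lambdarho} at $\mu=1$. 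Combined with Lemma~\ref{lem:upper_bound}, this produces a uniform $L^\infty$ (hence $L^2$) bound $\|n\|_\infty\leq\delta_2(1+\tau)$ on every non-negative fixed point. Crucially, the strictly positive lower bound $\rho(n)\geq \lambda_\varepsilon(\Omega)-\tau>0$, valid for every non-trivial positive fixed point at every $\mu\in[0,1]$, separates such fixed points from the trivial one $n\equiv 0$.

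Equipped with these bounds, I would work on an annular region $\mathcal{A}=\{n\in L^2(\Omega):r_1<\|n\|_{L^2}<r_2\}$ in the cone of non-negative functions, choosing $r_1$ smaller than the $L^2$-norm forced by $\rho(n)\geq \lambda_\varepsilon(\Omega)-\tau$ (via $\rho(n)\leq |\Omega|^{1/2}\|n\|_{L^2}$) and $r_2$ larger than the upper bound derived above. By the a priori estimates, no fixed point of $\Gamma(\cdot,\mu)$ lies on $\partial\mathcal{A}$ for any $\mu\in[0,1]$, so the Leray--Schauder degree of $I-\Gamma(\cdot,\mu)$ on $\mathcal{A}$ is well-defined and constant in $\mu$. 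At $\mu=0$ the fixed-point equation reduces to $-\e^2 n''=(R-\rho(n))n$, whose only non-trivial non-negative solution in $\mathcal{A}$ is the explicit multiple $n_0=\lambda_\varepsilon(\Omega)N_\varepsilon/\int_\Omega N_\varepsilon$ of the principal eigenfunction. Its local index is non-zero thanks to the simplicity of $\lambda_\varepsilon(\Omega)$. Homotopy invariance then transfers a fixed point to $\mu=1$, and rewriting the resulting equation in the form $-\e^2 n''+b(z)n=0$ with $b\in L^\infty(\Omega)$ and applying the strong maximum principle upgrades $n\geq 0$ to $n>0$ in $\Omega$.

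The main technical obstacle is the degree computation at $\mu=0$: although the positive fixed point $n_0$ is explicit, rigorously showing its Leray--Schauder index is non-zero requires either a direct spectral analysis of the linearization of $\Gamma(\cdot,0)$ at $n_0$, or a Krasnoselskii-type fixed-point-in-cone argument exploiting the lower bound $\rho(n)\geq \lambda_\varepsilon(\Omega)-\tau$ to produce a compression--expansion configuration on $\mathcal{A}$. The hypothesis $\tau<\lambda_\varepsilon(\Omega)$ enters precisely here: it guarantees that $n\equiv 0$ and any non-trivial positive fixed point are strictly separated in $L^2(\Omega)$, which is what allows the annular region $\mathcal{A}$ to be constructed and the degree argument to be closed.
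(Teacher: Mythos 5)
Your proposal follows essentially the same route as the paper: a Leray--Schauder homotopy in $\mu$ for the operator $\Gamma$ of \eqref{eq:topological_degree}, a priori control of the mass of fixed points by testing against the principal Dirichlet eigenfunction of \eqref{eq:eigenpair}, the $L^\infty$--$L^2$ bound from Lemma~\ref{lem:upper_bound}, and the identification of the $\mu=0$ fixed point with the normalized principal eigenfunction. Two differences are worth recording. First, for the upper bound on the mass you exploit both signs of the eigenfunction identity to get $|\rho(n)-\lambda_\varepsilon(\Omega)|\le\mu\tau$, whereas the paper only extracts the lower bound this way and proves $\rho(n)\le 1+\tau$ by multiplying the equation by a positive concave test function vanishing on $\partial\Omega$ and using $\max R=1$; your variant is slightly sharper and cleaner, but to land on \eqref{ineq:lambdarho} you must still invoke $\lambda_\varepsilon(\Omega)\le 1$, which follows from the Rayleigh quotient \eqref{eq:eigenvalue_dirichlet} (or from domain monotonicity together with Lemma~\ref{lem:limit_eigenvalue}) and should be stated. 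Second, the step you flag as the main obstacle --- the nonvanishing of the index of the positive fixed point at $\mu=0$ over your annulus $\mathcal{A}$ in the cone --- is not computed in the paper either: the paper uses Leray--Schauder in the form requiring only that fixed points of $\Gamma(\cdot,\mu)$ stay away from the boundary of the admissible set for every $\mu$ and that the fixed point be unique at one parameter value, and at $\mu=0$ uniqueness holds because any positive fixed point solves \eqref{eq:eigenpair}, forcing $\rho(n)=\lambda_\varepsilon(\Omega)$ and $n=\lambda_\varepsilon(\Omega)N_\varepsilon/\int_\Omega N_\varepsilon$ by simplicity of the principal eigenvalue. So if you keep your cone/fixed-point-index formulation you do owe the reader that index computation (or a Krasnoselskii compression--expansion argument), while adopting the paper's formulation lets you close the argument with the uniqueness at $\mu=0$ that you have already established; apart from these points your estimates coincide with the paper's and the proof is sound.
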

	\begin{proof}

		Let $\mathcal{X}$ be a bounded subdomain of $L^2(\Omega)$ such that every $u\in \mathcal{X}$ satisfies
		\begin{itemize}
			\item[(i)] $u>0$ for all $z\in \Omega$,
			\item[(ii)] $({\lambda_\varepsilon}(\Omega)-\tau)/2< \rho(u)<2(1+\tau)$ and 
			\item[(iii)] $\frac{\lambda_\varepsilon(\Omega)-\tau}{2|\Omega|}< \|u\|_{L^2(\Omega)}< 2\delta_2^{1/2}(1+\tau)$.
		\end{itemize}
		
		Since $\Gamma$ is a compact operator, in order to show existence of solutions by Leray-Schauder's Theorem we only need to check that the set of fixed points of $\Gamma:\mathcal{X}\to L^2(\Omega)$ for some $\mu$ (i.e. the set of functions satisfying $\Gamma(n,\mu)=n$) is bounded away from the boundary of $\mathcal{X}$, and also that for some value $\mu$, say $\mu =0$, the equation has a unique fixed point.
		
		This last condition is easily checked since for $\mu=0$ the operator $\Gamma$ admits a unique fixed point given by the eigenfunction $N$ of~\eqref{eq:eigenpair} with mass $\lambda_\varepsilon(\Omega)$ (i.e., from all the possible multiples of the normalized eigenfunction, the one with mass $\lambda_\varepsilon(\Omega)$).
		Let us check then the boundary condition.
		
		If $n$ is a fixed point of $\Gamma(n,\mu)$ the problem~\eqref{eq:topological_degree} becomes
		$$
		\begin{cases}
			-\varepsilon^2 n'' - Rn + \rho(n)n= \mu\phi(n)n,\quad z\in\Omega\\
			n(z)>0,\quad z\in \Omega\\
			n(z)=0,\quad z\in\partial \Omega
		\end{cases}
		$$
		Let us multiply the equation therein by any $N$ solution of~\eqref{eq:eigenpair}, and arrive to
		$$
		(\SM{-}\lambda_\e(\Omega) + \rho(n)) \int_\Omega nN -\mu\int_\Omega \phi(n)nN = 0
		$$
		which implies, since $\mu\phi(n)\in [-\tau,\tau]$ and $nN>0$ in $\Omega$, that
		\begin{equation}\label{eq:bound_mass}
			(\SM{-}\lambda_\varepsilon(\Omega) + \rho(n) + \tau)  \geq 0,
		\end{equation}
		meaning that $\rho(n)\geq \lambda_\varepsilon(\Omega) - \tau$ for all the fixed points of $\Gamma(\cdot,\mu)$. Since $n\geq 0$ and $\Omega$ is bounded, from Cauchy-Schwartz inequality we can also recover that
		$$
		\|n\|_{L^2(\Omega)}\geq \frac{\lambda_\varepsilon(\Omega)-\tau}{|\Omega|}.
		$$
		
		Let us check now the upper bound for the mass $\rho(n)$.  We take a $\varphi$ such that $\varphi(z)=0$ if $z\in\partial(\Omega)$ and $\varphi(z)>0$ and $\varphi''(z)<0$ if $z\in \Omega$. If we multiply the equation satisfied by the fixed points of $\Gamma$ by $\varphi$, integrate over $\Omega$ and then integrate by parts in the laplacian term we can see that
		$$
		\rho(n)\int_\Omega n\varphi \leq  \int_\Omega n\varphi + \tau \int_\Omega n\varphi,
		$$
		which immediately provides that $\rho(n)\leq 1+\tau$. Using Lemma~\ref{lem:upper_bound} we next deduce that
		$$
		\|n\|_{L^2(\Omega)}\leq \delta_2^{1/2}(1 + \tau).
		$$
		
		These computations show that for each $\mu\in(0,1)$ the fixed point $n$ are away from the boundary of $\mathcal{X}$. The Leray-Schauder's Theorem ensures then that there exists at least one fixed point for $\mu = 1$, this is, a solution to our problem.
	\end{proof}

	The next point in our study, in order to pass to the limit $\Omega\to \mathbb{R}$, is to obtain uniform bounds for the solution $n$ that do not depend on $\Omega$. The uniform upper bound is a direct consequence of Lemma~\ref{lem:upper_bound}. In order to find a lower bound for $n(z_0)$ (which is, remember, the maximum of the solution) we will find an estimate for the tails of the solutions that will keep the mass from "escaping to infinity", meaning that the mass must concentrate "at the center" of $\Omega$ when this domain is big, providing a lower bound for $n(z_0)$.
	
	\begin{prp}\label{prp:lower_bound_dirichlet}
		Let $n$ solve problem~\eqref{eq:problem_dirichlet} and let $\tau<1$. There exist an $\varepsilon_0\in (0,1)$ and a pair of positive values $z_*, \tilde c$ independent of $\varepsilon$ and $\Omega$ such that
		\begin{equation}
		\label{upper-bound}
		n_\varepsilon(z)\leq v(z):= (1+\tau)e^{-\frac{\tilde c}{\varepsilon}(|z|-z_*)}\text{ for all }|z|\geq z_*,\ \varepsilon<\varepsilon_0.
		\end{equation}
		As a consequence, we have that
		$$
		\max\limits_{z\in\Omega} n_\varepsilon(z)\geq  \tilde\delta(1-\tau),
		$$
		where $\tilde\delta$ is a positive constant depending only on the function $R$ and the constant $\tau$.
	\end{prp}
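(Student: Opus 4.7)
The plan is to construct an exponential super-solution of the linear equation satisfied by $n_\varepsilon$ on the exterior of a bounded interval and then invoke the comparison principle. Rewriting problem~\eqref{eq:problem_dirichlet} as
\[
-\varepsilon^2 n_\varepsilon''(z) + a_\varepsilon(z)\, n_\varepsilon(z) = 0, \qquad a_\varepsilon(z) := \rho(n_\varepsilon) - R(z) - \tau \int_\Omega \frac{n_\varepsilon(y)}{\rho(n_\varepsilon)} H(z-y)\, dy,
\]
I note that the transfer integral is bounded in absolute value by $\tau$. Using hypothesis~\eqref{eq:hypothesis_R}(3), I would pick $z_* > 0$, depending only on $R$ and $\tau$, so large that $R(z) \leq -(1 + 2\tau)$ for all $|z| \geq z_*$. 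Then on this exterior region $a_\varepsilon(z) \geq 1$, which makes the operator $-\varepsilon^2 \partial_{zz}^2 + a_\varepsilon$ satisfy a maximum principle.

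Setting $\tilde c := 1$, the candidate super-solution $v(z) := (1+\tau)\, e^{-\tilde c(|z|-z_*)/\varepsilon}$ satisfies, for $|z| > z_*$,
\[
-\varepsilon^2 v''(z) + a_\varepsilon(z)\, v(z) = (a_\varepsilon(z) - \tilde c^2)\, v(z) \geq 0.
\]
At the outer boundary $\partial\Omega$, the Dirichlet condition yields $n_\varepsilon \equiv 0 \leq v$. At the inner boundary $|z|=z_*$ I need $n_\varepsilon(\pm z_*) \leq 1+\tau$; this is the step requiring the most care, and I would obtain it by a bootstrapping argument: first apply the scheme with a coarser prefactor (coming for instance from Lemma~\ref{lem:upper_bound}) to obtain a preliminary exponential tail bound, and then use that bound at a slightly enlarged threshold to replace the crude prefactor by $(1+\tau)$ at the refined $z_*$. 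The comparison principle applied to $w := v - n_\varepsilon$ on $\Omega \cap \{|z| \geq z_*\}$ then yields the desired pointwise inequality $n_\varepsilon \leq v$ on this exterior set, uniformly in $\Omega$.

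For the lower bound on $\max_\Omega n_\varepsilon$, integrating the tail estimate gives
\[
\int_{\{|z|\geq z_*\} \cap \Omega} n_\varepsilon(z)\, dz \leq 2(1+\tau)\int_0^{\infty} e^{-s/\varepsilon}\, ds = 2(1+\tau)\,\varepsilon.
\]
Combining this with the lower bound $\rho(n_\varepsilon) \geq \lambda_\varepsilon(\Omega) - \tau$ from Proposition~\ref{prp:bounded_domain}, together with Lemma~\ref{lem:limit_eigenvalue} which yields $\lambda_\varepsilon(\R) \to 1$, one obtains for $\varepsilon \leq \varepsilon_0$ small enough and $\Omega$ suitably large
\[
\int_{-z_*}^{z_*} n_\varepsilon(z)\, dz \geq \rho(n_\varepsilon) - 2(1+\tau)\varepsilon \geq \tfrac{1}{2}(1-\tau).
\]
Since $[-z_*, z_*]$ has length $2z_*$, this forces $\max_\Omega n_\varepsilon \geq (1-\tau)/(4z_*) =: \tilde\delta\,(1-\tau)$, with $\tilde\delta$ depending only on $R$ and $\tau$.

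The main obstacle I foresee is ensuring that every constant in the exponential estimate ($z_*$, $\tilde c$, $\varepsilon_0$, and especially the prefactor $(1+\tau)$) is genuinely independent of the domain $\Omega$, which is essential for the subsequent passage $\Omega \to \R$ in the proof of Theorem~\ref{thm:existence}. A related subtlety is that Lemma~\ref{lem:upper_bound} only yields $n_\varepsilon \leq \delta_2 \rho(n_\varepsilon)$ with a Harnack constant $\delta_2$ that may degenerate as $\varepsilon \to 0$; the bootstrapping step at the inner boundary is precisely what lets us circumvent this dependence and recover the clean prefactor $(1+\tau)$ stated in the proposition.
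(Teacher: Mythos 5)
Your overall architecture is the same as the paper's: reduce to the exterior region where the zeroth-order coefficient $a_\varepsilon=\rho-R-\Phi_\varepsilon$ is positive, compare $n_\varepsilon$ with an exponential supersolution there (your choice $\tilde c=1$ versus the paper's $\tilde c=4(1+\tau)/(1-\tau)$ is immaterial), and then trap the mass in $[-z_*,z_*]$ using $\rho(n_\varepsilon)\geq \lambda_\varepsilon(\Omega)-\tau$ and $\lambda_\varepsilon\to 1$ to get $\max n_\varepsilon\geq (1-\tau)/(4z_*)$. Those parts are fine. The gap is exactly at the step you flag: the value bound $n_\varepsilon(\pm z_*)\leq 1+\tau$ at the inner boundary. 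Your bootstrap starts from Lemma~\ref{lem:upper_bound}, whose constant $\delta_2=\delta_2(\varepsilon)$ comes from a Harnack inequality for $-\varepsilon^2\partial^2_{zz}+a$; such constants generically grow like $e^{C/\varepsilon}$ as $\varepsilon\to 0$. After one comparison step you would know $n_\varepsilon(z_*+\eta)\leq \delta_2(\varepsilon)(1+\tau)e^{-\eta/\varepsilon}$, and to conclude $\leq 1+\tau$ you need $\delta_2(\varepsilon)\leq e^{\eta/\varepsilon}$, i.e.\ a quantitative rate for the Harnack constant and a choice of $\eta$ beating it. Neither is available from Lemma~\ref{lem:upper_bound} as stated, and you do not supply it, so the bootstrap does not close as written; the clean prefactor $(1+\tau)$, uniform in $\varepsilon$ and $\Omega$, is precisely what remains unproven.

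The paper obtains this bound by a much more elementary observation that bypasses Harnack altogether: outside $D_\tau$ one has $R-\rho+\Phi_\varepsilon<0$, so the equation forces $n_\varepsilon''\geq 0$ there; combined with the Dirichlet condition on $\partial\Omega$ this makes $n_\varepsilon$ monotone decreasing to the right of $D_\tau$ (and increasing to its left). Then, since $\rho(n_\varepsilon)\leq 1+\tau$, integrating over the unit interval $[z_{\tau,2},z_{\tau,2}+1]$ gives $n_\varepsilon(z_{\tau,2}+1)\leq 1+\tau$, hence $n_\varepsilon\leq 1+\tau$ on all of $[z_{\tau,2}+1,\infty)\cap\Omega$, with constants manifestly independent of $\varepsilon$ and $\Omega$. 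Substituting this monotonicity-plus-mass argument for your bootstrap (taking $z_*$ larger than both $z_{\tau,2}+1$ and your threshold where $R\leq-(1+2\tau)$) repairs the proof; alternatively you would have to prove a quantified Harnack constant of the form $e^{C/\varepsilon}$ and choose the enlargement $\eta>C$, which is more work for no gain.
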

	
	\begin{proof}We will argue for $z\geq 0$, since for $z\leq 0$ it is analogous. Let us call the right boundary value of $D_\tau$ 
		, $z_{\tau,2}$. Since $n_\varepsilon(z)\to 0$ as $|z|\to \infty$ and $ n_\varepsilon''(z)\geq 0$ for all $z$ outside of $D_\tau$ we conclude that $n'_\varepsilon(z)\leq 0$ for all $z\geq z_{\tau,2}$. This means that for all $z\in K_1:=[z_{\tau,2}, z_{\tau,2}+1], n_\varepsilon(z)\geq n_\varepsilon(z_{\tau,2}+1)$, and thus
		$$
		1+\tau \geq \rho(n_\varepsilon)>\int_{K_1} n_\varepsilon>|K_1|\cdot n_\varepsilon(z_{\tau,2}+1)=n_\varepsilon(z_{\tau,2}+1).
		$$
		Since $n'_\varepsilon(z)\leq 0$ for all $z\geq z_{\tau,2}$ we conclude that $n_\varepsilon(z)\leq 1+\tau$ for all $z\geq z_{\tau,2}+1$. Note that this bound does not depend on $\varepsilon$.
		
		Since $\lambda_\varepsilon(\Omega)$ increases to $\lambda_\varepsilon(\R)$ as $\Omega$ tends to $\R$ and $\lambda_\varepsilon(\R)$ itself increases to 1 as $\varepsilon\to 0$, thanks to Lemma \ref{lem:limit_eigenvalue}, there must exist two constanst $\varepsilon_0,c_1\in(0,1)$ such that $\lambda_\varepsilon(\widetilde \Omega)\geq c_1$ for all $\varepsilon\in(0,\varepsilon_0)$  and all $\widetilde \Omega$ such that $\Omega\subseteq\widetilde \Omega\subseteq \R$. Therefore, using \eqref{ineq:lambdarho},$n_\varepsilon$  satisfies the inequality, for $\e\leq \e_0$,
		$$
		-\varepsilon^2 n''_\varepsilon - \left(R+2\tau - c_1\right)n_\varepsilon\leq 0,\quad z\in \R.
		$$
		We next define $\tilde c:=4(1+\tau)/(1-\tau)$ and choose $z_1$ such that
		$$
		-R(z)+c_1\geq \tilde{c}^2-2\tau,\qquad \text{for all $|z|\geq z_1$}.
		$$
		Note that Assumption \eqref{eq:hypothesis_R} guarantees that we can choose such constant $z_1$. We then deduce that, for $\e\leq \e_0$,
		$$
		-\varepsilon^2 n''_\varepsilon -  \tilde{c}^2n_\varepsilon\leq 0,\quad  \text{for all }z\geq z_1.
		$$
		We next define $v(z):=(1+\tau)e^{-\frac{\tilde c}{\varepsilon}(z-z_*)}$ which satisfies 
		$$
		-\varepsilon^2 v''_\varepsilon -  \tilde{c}^2v_\varepsilon=0.
		$$
		If we take now $z_* =\max\{z_1, z_{\tau,2}+1\}$ we get that $v$ is a supersolution in $(z^*,+\infty)$  for all $\varepsilon\in (0,\varepsilon_0)$  and $v(z_*)=1+\tau\geq n_\varepsilon(z_*)$. A comparison argument based on the maximum principle readily provides that $n_\varepsilon(z)\leq v(z)$ for all $z\geq z_*$. A similar argument works for $z\leq -z_*$, proving the first claim of our lemma.
		
		Note also that, for all $\e\leq 1$,
		$$
		\int_{z_*}^\infty v(z)dz = \frac{(1+\tau)\varepsilon}{\tilde c}\leq \frac{1-\tau}{4},
		$$ 
		and hence
		$$
		\int_{z_*}^\infty n_\e(z)dz \leq \frac{1-\tau}{4}.
		$$
		Similarly, one can show that   
		$$
		\int_{-\infty}^{z^*}  n_\e(z)dz \leq \frac{1-\tau}{4},
		$$
		and consequently
		$$
		\int_{-z_*}^{z_*}n_\varepsilon \geq  \frac{1-\tau}{2}.
		$$
	This can only hold if $n_\varepsilon(z_0)\geq (1-\tau)/4z_*$, with $z_0$ the maximum point of $n_\e$. We conclude by taking $\tilde\delta=(4z_*)^{-1}$.

	\end{proof}
%
%
	
	With all the work done in this section we can pass to the limit $\Omega_i\to\R$ in a sequence of domains $\{\Omega_i\}_{i=1}^\infty$ and obtain the first part of Theorem~\ref{thm:existence}. Let us denote, for a fixed $\varepsilon>0$, the solution of~\eqref{eq:problem_dirichlet} on each domain by $n_{i}$ (we omitted the sub-index $\varepsilon$ in this explanation for the sake of readability). By the previous results, we obtained uniform positive bounds for $\max n_i$ and $\rho(n_i)$  meaning that not only the family $\{n_i\}_{i=1}^\infty$, but also their respective second derivaties $ n_i''$, are uniformly bounded. We obtain by Ascoli-Arzelá's Theorem that the limit $n_i$ converges to  a non-trivial function $n$. Moreover, since we obtained a uniform exponential decay on the tails of the solutions and using the Dominated Convergence Theorem, we also deduce that $\rho(n_i)$ converges to $\rho(n)$. We then conclude from the elliptic regularity that $n$ satisfies	 \eqref{eq:main}.	
	
	Finally, in order to obtain equation~\eqref{eq:mass_varepsilon} we simply integrate all over $\R$ equation~\eqref{eq:main} to obtain, after bringing back the subindex $\varepsilon$ in the notation,
	$$
	  \displaystyle\int_\R R(z) n_\varepsilon(z)\ {\rm dz} - \rho(n_\varepsilon)^2 + \frac{\tau}{\rho(n_\varepsilon)} \int_\R \int_\R n_\varepsilon(z)n_\varepsilon(y)H(z-y)\ {\rm dy dz}=0
	$$
	but we notice that, thanks to $H$ being an odd function, the last integral term is equal to 0.
	%
	
	
	
	
	
	\section{The vanishing diffusion limit}\label{section:limit_varepsilon}
	
	Let
	$
	\rho_\varepsilon:=\rho(n_\varepsilon).
	$
	Replacing the Hopf-Cole transformed function~\eqref{eq:transformed_hopf_cole} into equation~\eqref{eq:main}	we obtain
	\begin{equation}\label{eq:hopf-cole}
		\begin{cases}
			-\varepsilon u''_\varepsilon(z) - \left| u'_\varepsilon(z) \right|^2 = R(z) - \rho_\varepsilon +\tau \displaystyle\int_\R \frac{n_\varepsilon(y)}{\rho_\varepsilon}H(z-y)\ {\rm d}y,\quad z\in\R,\\
			\rho_\varepsilon = \displaystyle\int_\R n_\varepsilon(z){\rm dz},
		\end{cases}
	\end{equation}
	
	Let us define
	$$
	\Phi_\varepsilon(z):=\tau \displaystyle\int_\R \frac{n_\varepsilon(y)}{\rho_\varepsilon}H(z-y)\ {\rm d}y.
	$$
	Note that thanks to Assumption \eqref{eq:hypothesis_H}, $\Phi_\e(z)\in [-\tau,\tau]$ and $\Phi'_\varepsilon(z)\in [0,\tau]$	
	
	For the study of equation~\eqref{eq:hopf-cole} we will need the following regularity estimates.
	\begin{prp}\label{prp:regularity} Let $u_\varepsilon$ be a solution of~\eqref{eq:hopf-cole}.
	\begin{itemize}
	\item[(i)] Let $D\subset \R$ be a bounded domain. Then there exists a positive constant $C(R,\tau, D)$ such that for all $\varepsilon\in (0,1)$,
		$$
		\left| u'_\varepsilon(z) \right|\leq C(R,\tau, D)\text{ for all }z\in D.
		$$
	
	\item[(ii)] Given any compact set $K$ there exists a positive constant $C$ independent of $\varepsilon$ such that $|u_\varepsilon(z)|\leq C$ for all $z\in K$.
	
	\item[(iii)] There exists a positive constant $S_c$ such that
		$$
		\partial^2_{z}u_\varepsilon(z)\geq -S_c\quad\text{for all}\quad\varepsilon\in (0,1).
		$$
		In other words, the family $u_\varepsilon$ is uniformly semi-convex.
	\end{itemize}
	\end{prp}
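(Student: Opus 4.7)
The plan is to prove (iii) first, since the Lipschitz bound (i) then follows immediately from the Hopf--Cole equation \eqref{eq:hopf-cole}: solving for $|u'_\varepsilon|^2$ gives $|u'_\varepsilon(z)|^2 = -R(z) + \rho_\varepsilon - \Phi_\varepsilon(z) - \varepsilon u''_\varepsilon(z)$, and under $u''_\varepsilon \geq -S_c$ together with the uniform bounds $\rho_\varepsilon \leq 1+\tau$ and $|\Phi_\varepsilon| \leq \tau$ from Theorem~\ref{thm:existence}, the right-hand side is bounded on any bounded $D\subset\R$. The $L^\infty$ estimate (ii) will then be obtained by combining (i) with a uniform bound on $u_\varepsilon$ at the maximum point $z^*_\varepsilon$, which lies in $[-z_*, z_*]$ thanks to Proposition~\ref{prp:lower_bound_dirichlet}, and integrating the Lipschitz bound.

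For the semi-convexity (iii), I adopt a maximum principle argument on $v := u''_\varepsilon$. Differentiating the Hopf--Cole equation \eqref{eq:hopf-cole} twice yields the ODE
\begin{equation*}
-\varepsilon v'' - 2 u'_\varepsilon v' - 2 v^2 = R''(z) + \Phi''_\varepsilon(z),
\end{equation*}
where $\Phi''_\varepsilon(z) = \tau \int (n_\varepsilon(y)/\rho_\varepsilon) H''(z-y)\, dy$ is uniformly bounded by $\tau \|H''\|_{L^\infty(\R)}$ thanks to assumption \eqref{eq:hypothesis_H}. I will then argue that the infimum of $v$ on $\R$ is attained at some interior point $z_0$, using the exponential decay of $n_\varepsilon$ from Proposition~\ref{prp:lower_bound_dirichlet} together with the coercivity $R(z)\to -\infty$; a penalization of the form $\alpha (z-z_0)^2$ with $\alpha \to 0^+$ can be inserted if the infimum is not a priori attained. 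Applying the first- and second-order conditions $v'(z_0)=0$ and $v''(z_0)\geq 0$ turns the ODE into the algebraic inequality $2 v(z_0)^2 \leq -R''(z_0) - \Phi''_\varepsilon(z_0)$, which together with uniform upper bounds on the right-hand side yields the global estimate $v(z) \geq -S_c$.

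For (ii), the upper bound on $u_\varepsilon(z^*_\varepsilon)$ at the maximum point will be obtained using (iii) via a Laplace-type estimate: since $u'_\varepsilon(z^*_\varepsilon)=0$ and $u_\varepsilon$ is $S_c$-semi-convex, we have $u_\varepsilon(z) \geq u_\varepsilon(z^*_\varepsilon) - \tfrac{S_c}{2}(z-z^*_\varepsilon)^2$, so
\begin{equation*}
1+\tau \geq \rho_\varepsilon = \int_{\R} e^{u_\varepsilon(z)/\varepsilon}\,dz \geq e^{u_\varepsilon(z^*_\varepsilon)/\varepsilon}\sqrt{\tfrac{2\pi\varepsilon}{S_c}},
\end{equation*}
yielding $u_\varepsilon(z^*_\varepsilon) \leq \varepsilon\ln\bigl((1+\tau)\sqrt{S_c/(2\pi\varepsilon)}\bigr) = O(\varepsilon|\ln\varepsilon|)$. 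Combined with the lower bound $u_\varepsilon(z^*_\varepsilon) \geq \varepsilon\ln(\tilde\delta(1-\tau))$ from Proposition~\ref{prp:lower_bound_dirichlet} and the Lipschitz bound from (i), the $L^\infty$ control on any compact $K$ follows by integrating from $z^*_\varepsilon$. The main obstacle I anticipate is the rigorous justification, in step (iii), that the infimum of $v$ is attained at a uniformly bounded interior point: for the quadratic case $R(z)=1-gz^2$, where $R''\equiv -2g$, this is transparent, but under the general assumption \eqref{eq:hypothesis_R} the tails of $u'_\varepsilon$ must be controlled using only indirect information coming from the exponential decay of $n_\varepsilon$ and the equation itself.
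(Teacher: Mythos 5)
Your reductions of (i) and (ii) are fine \emph{conditionally}: once (iii) is known, reading $|u_\varepsilon'|^2=-R+\rho_\varepsilon-\Phi_\varepsilon-\varepsilon u_\varepsilon''\leq \sup_D|R|+1+2\tau+S_c$ off the equation gives the local Lipschitz bound, and your Laplace-type estimate at the maximum point (which lies in $D_\tau$ by Lemma~\ref{lem:upper_bound}), combined with $\max n_\varepsilon\geq\tilde\delta(1-\tau)$ and the gradient bound, gives (ii); this is a legitimate alternative to the paper's Bernstein argument for (i) and to its pointwise bound $n_\varepsilon(z_{\tau,2}+1)\leq 1+\tau$ for (ii). The problem is that your whole construction now rests on (iii), and that is precisely where the argument has a genuine gap. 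You minimize $v=u_\varepsilon''$ over all of $\R$, but nothing in what you invoke shows that this infimum is finite or attained: the exponential decay of $n_\varepsilon$ controls $u_\varepsilon$ from above, not $u_\varepsilon''$, and for fixed $\varepsilon$ you have no a priori lower bound on $v$ at infinity (that is exactly the quantity being estimated). The penalization fallback does not repair this by itself: to even run it you need $v$ bounded below (with an $\varepsilon$-dependent constant), and at a minimum of $v+\alpha(z-\bar z)^2$ the first-order condition gives $v'(z_\alpha)=-2\alpha(z_\alpha-\bar z)$, so the transport term $-2u_\varepsilon'v'$ in your differentiated equation does \emph{not} drop out; you are left with an error of size $\alpha|u_\varepsilon'(z_\alpha)||z_\alpha-\bar z|$, whose control requires a global growth bound on $u_\varepsilon'$ (your part (i) is only local, with a constant depending on $D$) together with localization of $z_\alpha$ — neither of which you establish, and the obvious bound $\alpha|z_\alpha-\bar z|^2\leq v(\bar z)-\inf v$ reintroduces the unknown $\inf v$ into the estimate. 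You flag this difficulty in your last sentence, but flagging it is not resolving it; as written, the key step of (iii) is not proved even in the quadratic case.

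The paper avoids exactly this obstruction by localizing: it multiplies by a \emph{compactly supported} cutoff $\varphi^2$ and, crucially, chooses $\varphi$ so that $u_\varepsilon'\varphi'\geq0$ on $\R$ — which is possible because $n_\varepsilon$ (hence $u_\varepsilon$) is monotone outside $D_\tau$, so the sign of $u_\varepsilon'$ is known there — and so that $\varepsilon\varphi''-2\varepsilon(\varphi')^2/\varphi\in[-1,1]$. At the minimum of $w=v\varphi$ the dangerous cross term then has a favorable sign and no global information on $u_\varepsilon'$ is needed; the quadratic inequality $2w^2\leq f(z)w+\tilde C$ with $f\geq-1$ gives a bound independent of $\varepsilon$, and the global statement follows by rescaling $\varphi_j(z)=\varphi(z/j)$ and letting $j\to\infty$. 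If you want to keep your global-minimum/penalization scheme, you must first prove (for fixed $\varepsilon$) that $u_\varepsilon''$ is bounded below and supply a global, at most linear-in-$z$ growth bound on $u_\varepsilon'$ (e.g.\ by tracking the $z$-dependence of the Bernstein constant), which is substantially more work than the sign-adapted cutoff; otherwise you should adopt the paper's localization. Note also that both your argument and the paper's implicitly use that $R''+\Phi_\varepsilon''$ is globally bounded, which is automatic in the quadratic case $R=1-gz^2$ but is an extra restriction under the general hypothesis \eqref{eq:hypothesis_R}.
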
 
	In the next subsection we will prove step by step each of these claims, and then after that, Theorem~\ref{thm:limit_epsilon}.
	
	\subsection{The regularity estimates on $u_\varepsilon$. Proof of Proposition~\ref{prp:regularity}}

	\textbf{Proof of Proposition~\ref{prp:regularity} (\textit{i})}:
		We start by choosing an open set $E$ such that $D\subset E\subset R$ and $\mathrm{dist}(D,\p E)>1$, and   a smooth, compactly supported cut-off function $\zeta$   such that
		$$
		\zeta(z)\equiv 1\text{ for all }z\in D\subset \supp(\zeta)=E
		$$
		and $|\zeta|<1$, $|\zeta'|_\infty + |\zeta''|_\infty\leq C $, where   $C$ is a positive constant. Let us also define $p(z):=(u_\varepsilon(z))'$. Replacing this in the equation on $u_\e$ we obtain the following equation on $p$:
		\begin{equation}\label{eq:p_gradient}
		-\varepsilon p'(z) -  (p(z))^2 = R(z) - \rho_\varepsilon + 	\Phi_\varepsilon(z),\quad z\in E.
		\end{equation}
		The next step is to differentiate the  equation above, to obtain
		$$
		-\varepsilon p''(z) -  2p(z)p'(z) = R'(z)+	\Phi'_\varepsilon(z),\quad z \in E.
		$$
		If we substitute the value of $p'(z)$ from~\eqref{eq:p_gradient} into the previous equation and then multiply it by $\varepsilon p(z)(\zeta(z))^4$ we find
		$$
		-\varepsilon^2 p''p\zeta^4 +2p^4\zeta^4+2(R-\rho_\varepsilon+\Phi_\varepsilon)p^2\zeta^4- \varepsilon( R'+	\Phi'_\varepsilon)p\zeta^4=0,
		$$
		where we omitted the $z$ variable for better eligibility.
		
		We next look  for a maximum of the function $w:=|p\zeta|$. This maximum should be attained at an interior point of $E$ that we denote by $z_m$ and, since the maximum of $w$ must coincide with the maximum of $w^2$, on such a point we must have
		$$
		[(p\zeta)^2]'(z_m)=0\Rightarrow p'(z_m)\zeta(z_m)=-p(z_m)\zeta'(z_m),
		$$
		and
		$$
		[(p\zeta)^2]''(z_m)\leq 0\Rightarrow p''(z_m)p(z_m)\zeta^2(z_m)\leq 2p^2(z_m)(\zeta')^2(z_m) - p^2(z_m)\zeta(z_m)\zeta''(z_m).
		$$
		Replacing this in the previous equation we obtain that, at the maximum point $z_m$,
		$$
		\varepsilon^2p^2\zeta^3\zeta'' - 2\varepsilon^2p^2\zeta^2(\zeta')^2 + 2p^4\zeta^4+2(R-\rho_\varepsilon+\Phi_\varepsilon)p^2\zeta^4- \varepsilon( R'+	\Phi'_\varepsilon)p\zeta^4\leq 0
		$$
		which means that
		$$
		2w^4(z_m)-\left[3\varepsilon^2C  + 2\big(C_1 +1+2\tau\big)\right] w^2(z_m) - \varepsilon\big(C_2 +\tau\big) w(z_m)\leq 0,
		$$
		where $C_1 =\max\limits_{z\in E}|R(z)|$ and $C_2 =\max\limits_{z\in E}|R'(z)|$. Note that we made use of the fact that $\rho_\varepsilon\leq 1+\tau$.
		
		This implies that there must exist a positive constant $k(R,\tau, D)$ such that
		$$
		 \sup\limits_{z\in E}|p\zeta|=|p\zeta| (z_m)\leq k(R,\tau, D).
		$$
		 We finish by comparing
		$$
		\sup\limits_{z\in D}|p|= \sup\limits_{z\in D}|p\zeta|\leq \sup\limits_{z\in E}|p\zeta|\leq k (R,\tau, E) .
		$$
		and thus
		$$
		|u'_\varepsilon|\leq k(R,\tau, E) \text{ for all }z\in D.
		$$
	
	\textbf{Proof of Proposition~\ref{prp:regularity} (\textit{ii})}:
	
		Let $z_{0,\varepsilon}$ be a point where $n_\varepsilon(z)$ attains its maximum. Note that, thanks to Lemma~\ref{lem:upper_bound}, $z_{0,\varepsilon}\in D_\tau$ for all $\varepsilon>0$ and thus there is no loss of generality in considering that $z_{0,\varepsilon}\in K$, since we can always argue in a bigger domain $\tilde K$ containing both $K$ and $z_{0,\varepsilon}$.
		
		Let us find a lower bound for $u_\varepsilon$. From Theorem~\ref{thm:existence} we have that
		$$
		u_\varepsilon(z_{0,\varepsilon})=\varepsilon \ln \big(n_\varepsilon(z_{0,\varepsilon})\big)\geq \varepsilon \ln \big(\delta(1-\tau)\big)\geq - \left|\ln \big(\delta(1-\tau)\big)\right|.
		$$
		This uniform lower bound for the function at a point and the uniform bound for the gradient in compact sets obtained in the previous proof allow us to conclude that there must exist a positive $C_1$ such that $u_\varepsilon(z)\geq -C_1$ for all $z\in K$.
		
		For finding the upper bound we recall that, by Proposition~\ref{prp:lower_bound_dirichlet}, we had  that $n_\varepsilon(z)\leq 1+\tau$ for all $z\geq z_{\tau,2}+1$. Note that this bound does not depend on $\varepsilon$. Once again,we can assume without loss of generality that $z_{\tau,2}+1\in K$ too. From here we obtain that
		$$
		u_\varepsilon(z_{\tau,2}+1)=\varepsilon \ln \big(n_\varepsilon(z_{\tau,2}+1)\big)\leq \varepsilon \ln \big(1+\tau\big)\leq |\ln \big(1+\tau\big)|
		$$
		and so, again from the bound for the gradient of $u_\varepsilon$ we deduce the existence of a positive constant $C_2$ such that $u_\varepsilon(z)\leq C_2$ for all $z\in K$.

\textbf{Proof of Proposition~\ref{prp:regularity} (\textit{iii})}:

		Similarly as we did when applying Bernstein's Method for finding uniform bounds for $u'_\varepsilon$, let us differentiate twice equation~\eqref{eq:hopf-cole} and multiply it by the square of a cut-off function $\varphi$ (with properties that will be specified later). If we rename $v:=u''_\varepsilon$ we obtain
		$$
		-\varepsilon v''\varphi^2-2v^2\varphi^2-2u'_\varepsilon v'\varphi^2-(R''+\Phi''(n_\varepsilon))\varphi^2=0.
		$$
		
		Let us define now $w:=v\varphi$. Clearly, at a point of minimum of $w$ (which must be attained inside the support of $\varphi$) we must have $w'=0$ and $w''\geq 0$. Rewriting   this   in terms of $v$ and $\varphi$ and substituting in the previous equation we obtain
		$$
		\varepsilon v\varphi''\varphi- 2 \varepsilon v(\varphi')^2-2v^2\varphi^2+2u'_\varepsilon
		v\varphi'\varphi-(R''+\Phi''(n_\varepsilon))\varphi^2\geq0.
		$$
		We can rearrange here, recalling that $R''+\Phi''(n_\varepsilon)$ is bounded by a positive constant $\tilde{C}$, to obtain
		$$
		2w^2\leq \left(\varepsilon \varphi'' - 2 \varepsilon \frac{(\varphi')^2}{\varphi}+ 2u'_\varepsilon\varphi'\right)w + \tilde{C}.
		$$
		
		Now we choose the properties of $\varphi$. Let us call again the left and right boundary values of $D_\tau$, $z_{\tau,1}$ and $z_{\tau,2}$ respectively. Since $n_\varepsilon(z)\to 0$ as $|z|\to \infty$ and $ n''_\varepsilon(z)\geq 0$ for all $z$ outside of $D_\tau$ we conclude that $n'_\varepsilon(z)\geq 0$ for all $z\leq z_{\tau,1}$ and $n'_\varepsilon(z)\leq 0$ for all $z\geq z_{\tau,2}$. Thus, the same can be said about $u'_\varepsilon$. Let us choose then $\varphi$ a positive function such that, for all $\varepsilon\in (0,1)$,
		\begin{equation}\label{eq:conditions_auxiliary_convexity}
			\begin{aligned}
			&\text{(i) }\varphi\equiv 1\text{ for all }z\in D_\tau\text{ and }\varphi\equiv 0\text{ for all }z\leq z_{\tau,1}-10\text{ and }z\geq z_{\tau,2}+10,\\
			&\text{(ii) } u'_\varepsilon\varphi'\geq 0\text{ for all }z\in\R,\\
			&\text{(iii) } \varepsilon \varphi'' - 2 \varepsilon \frac{(\varphi')^2}{\varphi}\in [-1,1].
			\end{aligned}
		\end{equation}
		Condition (ii) is equivalent with imposing that  $\varphi'(z)\geq 0$ for all $z\leq z_{\tau,1}$ and $\varphi'(z)\leq 0$ for all $z\geq z_{\tau,2}$, matching the sign of the derivative of $u_\varepsilon$ outside $D_\tau$. Let $z_1=\sup \{z\in \R : z<z_{\tau,1}, \varphi(z)=0\}$ and $z_2=\inf \{z\in\R : z>z_{\tau,2}, \varphi(z)=0\}$. Then condition (iii) would hold if $\varphi''$ and $\varphi'$ are small enough  and if    $\varphi(z)= c(z-z_i)^2+o(z-z_i)^2$ for all $z$ close to $z_i$, with $i=1,2$ and $c$ small enough. 
		
		With these properties in mind one can see that
		$$
		2w^2\leq f(z)w + \tilde{C}\quad\text{with}\quad f(z)\geq -1,
		$$
		meaning that $w\geq (-1-\sqrt{1+8\tilde{C}})/4$. We finish by making $\varphi$ converge to the constant 1 in the whole $\mathbb{R}$ and we do so by considering a sequence $\varphi_j:=\varphi\left(\frac{z}{j}\right)$  instead of $\varphi$ and making $j\to\infty$ in order to obtain a uniform bound in the whole $\R$ after noticing that all the $\varphi_j(z)$ satisfy~\eqref{eq:conditions_auxiliary_convexity} in bigger and bigger domains.

	\subsection{Proof of Theorem~\ref{thm:limit_epsilon}}
	
	The first step is passing to the limit as $\varepsilon\to 0$ in~\eqref{eq:hopf-cole}. As a consequence of Lemma~\ref{lem:limit_eigenvalue} and the uniform bounds for $\rho_\varepsilon$ we have that as $\varepsilon\to 0$ and along subsequences $\rho_\varepsilon$ converges to a constant $\rho_0$ such that
		$$
		0<1-\tau\leq \rho_0\leq 1+\tau.
		$$

	Now let us note that since $\Phi(n_\varepsilon)\in [-\tau,\tau]$, $\Phi'(n_\varepsilon)\in [0, \tau]$ and, in general, every $n^{th}-$derivative of $\Phi(n_\varepsilon)$ is uniformly bounded by a factors depending on $\tau$, then $\Phi(n_\varepsilon)$ converges, up to a subsequence $\varepsilon_{i_k}$, uniformly to a function
	$$
	\Phi_0:=\lim\limits_{i_k\to\infty}\Phi(n_{\varepsilon_{i_k}})\in C^\infty(\R).
	$$

	%
	With all the uniform bounds for $u_\varepsilon$ and $ |\nabla u_\varepsilon|$ the Ascoli-Arzelá Theorem provides the convergence along a subsequence in compact sets to a continuous function $u$. Semi-convexity comes from the uniform bound found in Proposition~\ref{prp:regularity}.
	
	Since $\Phi_0$ is a continuous, bounded function, the proof of $u$ being a viscosity solution of~\eqref{eq:viscosity_u} is analogous to the one in \cite{BarlesPerthame2} but in the time-independent case.
	
	The fact that $\max\limits_{z\in\R}u(z)=0$ comes from the observation that the integral
	$$
	\rho(n_\varepsilon)=\int e^{\frac{u_\varepsilon(z)}{\varepsilon}}dz
	$$
	is uniformly bounded from above and below away from $0$ thanks to \eqref{boundrho}. Note indeed that since $u_\e$ converges locally uniformly to  $u$, with $u$ a continuous function, $u$ cannot take positive values since otherwise $\rho(n_\varepsilon)$ would tend to $+\infty$ as $\e\to 0$. To prove that $u$ attains the value $0$, let's suppose by contradiction that $\max_z u(z)=-a<0$.Using the uniform convergence of $u_\e$ to $u$ and the upper bound \eqref{upper-bound} we deduce that
	$$
	\rho(n_\varepsilon)=\int e^{\frac{u_\varepsilon(z)}{\varepsilon}}dz\to 0,
	$$
	which is in contradiction with the lower bound in \eqref{boundrho}.

	Finally, since $n_\varepsilon\to 0$ whenever $u_\varepsilon < 0$ we conclude that 
	$$
	\supp n(z)\subseteq \{z\in\R : u(z)=0\}.
	$$
	Here, $\supp n$ is a set that must contain at least one point since $\rho_0>0$. Since $u$ is semi-convex it means that it is differentiable at its maximum points and thus $\nabla u$ must be equal to 0 when $u=0$. We then conclude thanks to \eqref{eq:viscosity_u} that
	$$
	\{z\in\R : u(z)=0\}\subseteq \left\{z\in\R : \rho_0-R(z)=\Phi_0(z)\right\}.
	$$
	
	%

	\section{Discussion of the limit profile}\label{section:limit_profile}
	
	From the results provided in the previous sections we deduce that, as $\e\to 0$, $n_\e$ converges to a measure $n$ such that $n$ corresponds to an evolutionary stable strategy (ESS), as stated in Proposition \ref{prop-ESS}. In this section we try to characterize the evolutionary stable strategies of the model, thereby describing the asymptotic behavior of the solutions.
	
	An evolutionary stable strategy as defined in Definition \ref{definition:ESS} may be composed of continuous subsets and singular points. We expect however that this set would be composed only of one or several isolated points such that the limit phenotypic density $n$ is a sum of Dirac masses. We believe also that $m$-morphic Evolutionary Stable Strategies with $m$ any integer number  may exist, provided the parameters of the model are chosen accordingly. However, we were unable to prove such a result in its general form.  In Section \ref{sec:supplimit} we  provide some conditions under which the only possible ESS is monomorphic and hence there is no ESS set with a continuous subset or non-isolated points. In Sections \ref{sec:mono} and \ref{sec:dimo} we provide necessary and sufficient conditions, respectively, for the existence of a  monomorphic and dimorphic ESS.  Moreover, we fully characterize the monomorphic or dimorphic ESS under their conditions of existence. Finally in Section \ref{sec:trimo} we discuss situations where the Evolutionary Stable Strategy may have more than two isolated points.

%
	
	From now on and for the rest of the article, we will focus on the case
	$$
	R(z)=1-gz^2.
	$$
	Let us also recall definition~\eqref{eq:mu}.
	
	\subsection{On the support of the limit}
	\label{sec:supplimit}
Let $n$ be a phenotypic density corresponding to an ESS. Thanks to Definition \ref{definition:ESS} $F(z)$ takes its maximum at the ESS points and hence
$$
	\begin{cases}
		\Phi_0(z) =\rho_0-1 + gz^2 \  & \text{for all}\  z\in\supp n,\\
		\Phi'_0(z)=2 gz \  & \text{for all}\  z\in\supp n,\\
		\Phi''(z) \leq 2g \  & \text{for all}\  z\in\supp n.
	\end{cases}
	$$
	Let us define
	$$
		C_2:=\left(\max\limits_{x\in\R}H''(x)\right)^{-1},
	$$
	and also the set
	$$
	Z:=\left\{z\in\R : R(z)-\rho_0+\Phi_0(z)=0\right\}.
	$$
	  We have then the following result.
	
	\begin{lem}
		Every point $z\in Z$ satisfies
		$$
		0\leq z\leq  \min\{\mu, 2\sqrt{\mu}\}.
		$$
		Moreover, if either $\mu< C_2$ or $\min\{\mu, 2\sqrt{\mu}\}\leq z_H$, where this $z_H$ comes from hypothesis~\eqref{eq:hypothesis_H}, then
		$$
		Z=\{z_0\}.
		$$
		In other words, the set $Z$ and, as a consequence, the support of $n$ consist in just one point.
	\end{lem}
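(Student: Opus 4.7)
My plan is to exploit that every $z \in Z$ is a global maximum of the fitness $F$, so that $F'(z) = -2gz + \Phi_0'(z) = 0$ and $F''(z) = -2g + \Phi_0''(z) \leq 0$ at every such $z$. For the first claim, I would use the pointwise bounds on $H$ and its derivatives coming from \eqref{eq:hypothesis_H}: because $H$ is odd with $H'' < 0$ on $(0,\infty)$, the derivative $H'$ is even, non-negative, and bounded above by $H'(0) = 1$. Hence $\Phi_0'(z) \in [0,\tau]$, and the first-order condition $\Phi_0'(z) = 2gz$ yields $0 \leq z \leq \tau/(2g) = \mu$. For the bound $z \leq 2\sqrt{\mu}$, I plug $z \in Z$ directly into $F(z) = 0$ to get $\Phi_0(z) = \rho_0 - 1 + gz^2$; combining $|\Phi_0(z)| \leq \tau$ (from $|H|\leq 1$) with the lower bound $\rho_0 \geq 1-\tau$ from Theorem \ref{thm:existence} gives $gz^2 \leq 2\tau$.

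For the first uniqueness case $\mu < C_2$, the definition of $C_2$ together with the representation of $\Phi_0''$ as a weighted average of $H''$ yields the uniform bound $\Phi_0''(z) \leq \tau\,\max_{x\in\R} H''(x) = \tau/C_2 < 2g$, valid for all $z \in \R$. This makes $F$ strictly concave on $\R$, and since $\Phi_0$ is bounded and $R$ is a downward parabola, $F(z) \to -\infty$ as $|z|\to\infty$, so $F$ attains its maximum at a single point. As $\rho_0 > 0$ forces $\supp n \subseteq Z$ to be non-empty, $Z$ must coincide with this unique maximizer.

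The second case $\min\{\mu, 2\sqrt{\mu}\} \leq z_H$ is more delicate because $F$ is no longer globally concave; however, the first part of the lemma already places $Z$, and hence $\supp n$, inside $[0, z_H]$. For $z,y \in [0,z_H]$ one has $z - y \in [-z_H, z_H]$, and hypothesis \eqref{eq:hypothesis_H}(3) gives $H''' \leq 0$ there, so $H'$ is concave on $[-z_H, z_H]$. Writing $\Phi_0'(z) = \tau \int (n(y)/\rho_0)\,H'(z-y)\,dy$ as a nonnegative average of functions that are concave in $z \in [0,z_H]$, I deduce that $\Phi_0'$ is concave on $[0,z_H]$; consequently $F'(z) = -2gz + \Phi_0'(z)$ is concave on $[0,z_H]$. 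If $Z$ contained two points $z_1 < z_2$ in this interval, $F'$ would vanish at both, and the fact that $F \leq 0$ with equality only on $Z$ would force an additional zero of $F'$ strictly between $z_1$ and $z_2$ where $F$ turns from decreasing to increasing. A concave function with three zeros in an interval must vanish identically on the segment joining the outermost ones, which would yield $F \equiv 0$ on $[z_1,z_2]$, contradicting $F < 0$ strictly between $z_1$ and $z_2$.

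The main technical difficulty I anticipate concerns the borderline situation where $H'$ is only non-strictly concave on $(-z_H, z_H)$: since \eqref{eq:hypothesis_H}(3) only asserts $H''' \leq 0$, in principle $\Phi_0'$ could be affine on a sub-interval and the concavity argument of Case B could fail to exclude the degenerate possibility that $Z$ itself is a non-trivial interval. For the canonical kernels $H = \tanh$ and $H = \frac{2}{\pi}\arctan$ this does not occur because $H''' < 0$ strictly in the interior of $(-z_H, z_H)$, and the general case would be handled by iterating the argument at the level of $\Phi_0'''$, ruling out that $\int (n/\rho_0)\,H'''(z-y)\,dy$ vanishes identically on a sub-interval unless $n$ itself degenerates.
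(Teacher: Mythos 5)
Your argument is correct and, for the first claim and the case $\mu<C_2$, it is essentially the paper's: the paper also gets $z\le\mu$ from the first-order condition $\Phi_0'(z)=2gz\in(0,\tau]$ and $z\le 2\sqrt{\mu}$ from $\rho_0-1+gz^2=\Phi_0(z)\in(-\tau,\tau)$ combined with $\rho_0\ge 1-\tau$ (note that this lower bound on $\rho_0$ is the one stated in Theorem~\ref{thm:limit_epsilon}, not Theorem~\ref{thm:existence}), and for $\mu<C_2$ it argues that two points of $Z$ would, by the mean value theorem applied to $\Phi_0'$, produce a point with $\Phi_0''=2g\le\tau C_2^{-1}$ — precisely the contrapositive of your global strict concavity of $F$. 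The genuine divergence is in the case $\min\{\mu,2\sqrt{\mu}\}\le z_H$: the paper uses the second-order conditions $\Phi_0''(z_a),\Phi_0''(z_b)\le 2g$ together with the interior point where $\Phi_0''=2g$ to extract, by a second mean-value-type step, a point $z^{**}\in(z_a,z_b)$ with $\Phi_0'''(z^{**})=0$, and contradicts this with the sign of $H'''$ on $(-z_H,z_H)$; you instead integrate the same sign information once to get concavity of $F'$ on $[0,z_H]$ and run a three-zeros argument. Both routes hinge on exactly the strictness issue you flag: your version leaves open the degenerate possibility $F\equiv 0$ on $[z_1,z_2]$ (so that $Z$ is an interval), while the paper, at the corresponding step, asserts $H'''(z^{**}-y)<0$ even though \eqref{eq:hypothesis_H} only states $H'''\le 0$ on $[-z_H,z_H]$, so the same tacit strengthening is used there. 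With $H'''<0$ on the open interval $(-z_H,z_H)$ — true for $\tanh$ and $\arctan$ — both arguments close, and in particular your degenerate interval is excluded at once, since on it one would need $\Phi_0'''\equiv 0$ while $\Phi_0'''(z)=\tau\int H'''(z-y)\,\frac{n(y)}{\rho_0}\,dy<0$ for $z$ in the open interval (the arguments $z-y$ stay strictly inside $(-z_H,z_H)$ because $0\le z_1<z<z_2\le z_H$ and $\supp n\subseteq[0,z_H]$); this is a cleaner way to finish than your vague "iterate at the level of $\Phi_0'''$" remark. So, relative to the paper's own proof, your proposal is not weaker; it trades the paper's double mean-value step for a concavity/three-zeros step of the same depth, and it is in fact more candid about the implicit strict-sign assumption on $H'''$.
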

	\begin{proof}
		
		Suppose that $z\in Z$. Since $\Phi_0(z)\in (-\tau,\tau)$, from $\rho_0-1 + gz^2 =\Phi_0(z)$ we deduce that
		$$
		-\tau <\rho_0-1 + gz^2<\tau.
		$$
		Using that $\rho_0\geq 1-\tau$ we deduce that $z\leq 2\sqrt{\mu}$. Similarly, since $\Phi'_0(z)\in (0,\tau]$ we deduce that
		$$
		0<2gz\leq\tau
		$$
		and $0<z\leq\mu$ follows.
		
		Next, let us suppose now that $Z$ has more than one point and call two of them $z_a$ and $z_b$. In such points we have that $\Phi'_0(z_a)=2gz_a$ and $\Phi'_0(z_b)=2gz_b$ and, since $\Phi_0 $ is smooth, by the Mean Value Theorem there must exist a point $z^*\in(z_a, z_b)$ such that
		$$
		\Phi''_0(z^*)=\frac{\Phi'_0(z_b)-\Phi'_0(z_a)}{z_b-z_a}=2g.
		$$
		Since, by the definition of $C_2$   we know that $\Phi''_0\in[-\tau C_2^{-1},\tau C_2^{-1}]$ we deduce that necessarily $\Phi''_0(z^*)=2g\leq\tau C_2^{-1}$, and thus $\mu\geq C_2$.
		
		Similarly, since $z_a<z^*<z_b$ and $\Phi''_0(z_a)\leq 2g = \Phi''_0(z^*)\geq \Phi''_0(z_b)$ and $\Phi_0\in C^\infty$ we deduce that there must exist a point $z^{**}\in(z_a,z_b)$ such that $\Phi'''_0(z^{**})=0$. This is to say that
		$$
		\int_{\R}   H''' (z^{**}-y)\ \frac{dn(y)}{\rho_0}  = 0,
		$$
		but this is only possible if $\sup\{Z\}>z_H$, since if not then $z_b\leq z_H$, meaning that $z^{**}<z_H$ and thus $z^{**}-y\in (-z_H,z_H)$ for all $y\in Z$ (remember that $y> 0$ for all $y\in Z$) and thus, by~\eqref{eq:hypothesis_H}, $H'''(z^{**}-y)<0$ for all $y\in Z$, making $\Phi'''_0(z)<0$, a contradiction. We conclude that, if there are more than one point in $Z$ then necessarily $z_H<\min\{\mu, 2\sqrt{\mu}\}$.
	\end{proof}

	\subsection{The monomorphic case}
	\label{sec:mono}
	In this section, we will provide necessary and sufficient conditions for the existence of a monomorphic ESS.
	Let us start the study of this case by commenting on an equation that will provide very useful insight into the monomorphic and the dimorphic cases. 
	
	\begin{lem}
	\label{lem:d1}
		Under the assumptions~\eqref{eq:hypothesis_H}, the equation
		\begin{equation}\label{eq:tanh-cosh}
			z \left( 1 + H'(z) \right) = 2 H(z),
		\end{equation}
		admits three solutions, which are $0$ and two constants $d_1$ and $-d_1$. Moreover $d_1\geq z_H$.
	\end{lem}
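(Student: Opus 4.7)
I would set $G(z) := 2H(z) - z\bigl(1 + H'(z)\bigr)$ so that~\eqref{eq:tanh-cosh} reads $G(z)=0$. Since $H$ is odd by~\eqref{eq:hypothesis_H}, $H'$ is even and therefore $G$ is odd; it suffices to locate the positive zeros of $G$ and use the symmetry. From $H(0)=0$ and $H'(0)=1$ one immediately reads $G(0)=0$ and $G'(z) = H'(z) - 1 - zH''(z)$, giving $G'(0)=0$. Differentiating once more yields the key identity
\[
    G''(z) \;=\; -\,z\,H'''(z),
\]
so the trichotomy on $H'''$ in~\eqref{eq:hypothesis_H}(3) translates directly into $G''(z) \geq 0$ for $z\in (0,z_H]$ and $G''(z) < 0$ for $z>z_H$.

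This pins down the shape of $G'$: starting from $G'(0)=0$, $G'$ is non-decreasing on $[0,z_H]$ (so $G'(z_H) \geq 0$) and then strictly decreasing on $(z_H,\infty)$. Because $H \leq 1$ and $H' \geq 0$ for $z\geq 0$, one has the crude bound $G(z) \leq 2-z$, whence $G(z) \to -\infty$ as $z\to +\infty$. This prevents $G'$ from staying non-negative and, combined with strict monotonicity on $(z_H,\infty)$, forces a unique sign change of $G'$ at some $z_\ast \geq z_H$. Consequently $G$ is non-decreasing on $[0,z_\ast]$ with $G(0)=0$ and strictly decreasing on $(z_\ast,\infty)$ down to $-\infty$.

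To conclude I would show $G(z_\ast)>0$ and invoke the intermediate value theorem: this strict positivity is the one delicate step. If instead $G(z_\ast)=0$, then the monotonicity on $[0,z_\ast]$ forces $G \equiv 0$ on that interval, i.e.\ $2H(z) = z(1+H'(z))$ on $[0,z_\ast]$. Differentiating twice and using that $H''$ is odd (so $H''(0)=0$) yields $H'' \equiv 0$ on $[0,z_\ast]$, contradicting the strict inequality $H''(z)<0$ for $z>0$ in~\eqref{eq:hypothesis_H}(2). Hence $G(z_\ast)>0$, and $G$ has exactly one zero $d_1$ on $(0,\infty)$ with $d_1 > z_\ast \geq z_H$; oddness of $G$ then gives the full solution set $\{-d_1,0,d_1\}$. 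The main obstacle is precisely this degeneracy: hypothesis~\eqref{eq:hypothesis_H}(3) only provides $H''' \leq 0$ on $[-z_H,z_H]$, so $G$ is not \emph{a priori} strictly positive on $(0,z_\ast)$, and ruling out the flat case requires appealing to the strict concavity of $H$ on $(0,\infty)$ rather than to any convexity control of $G$ itself.
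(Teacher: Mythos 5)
Your proposal is essentially the paper's own proof: you introduce the same function $G(z)=2H(z)-z(1+H'(z))$, use the same identity $G''(z)=-zH'''(z)$ to read off the sign of $G''$ from hypothesis~\eqref{eq:hypothesis_H}(3), and conclude via $G(z)\to-\infty$; in fact you are more careful than the paper, which never explicitly rules out the degenerate flat case on $(0,z_H]$. One small point to tighten: from $G(z_\ast)>0$ alone you cannot yet assert that $G$ has exactly one zero on $(0,\infty)$, because a non-decreasing function with $G(0)=0$ could still vanish identically on an initial segment $[0,a]$ with $a<z_\ast$ (giving a continuum of solutions) and become positive only afterwards. The fix is immediate and uses exactly the argument you already wrote at $z_\ast$: if $G(a)=0$ for some $a\in(0,z_\ast]$, monotonicity forces $G\equiv 0$ on $[0,a]$, and differentiating twice gives $H''\equiv 0$ there, contradicting $H''(z)<0$ for $z>0$; hence $G>0$ on all of $(0,z_\ast]$, and the unique positive zero $d_1>z_\ast\geq z_H$ follows as you say.
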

	\begin{proof}
		Let us define
		\begin{equation}
		\label{def:G}
		G(z)=2H(z)-z(1+H'(z)).
		\end{equation}
		Then it is easy to see that $G(0)=G'(0)=G''(0)=0$ and $G(z)\to -\infty$ as $z\to +\infty$, while, by the hypothesis on $H'''$,  $G''(z)\geq 0$ for all $z\in(0, z_H]$ while $G''(z)<0$ for all $z>z_H$. This proves that there exists a unique positive constant $d_1>z_H$ such that $G(d_1)=0$ and $G'(d_1)<0$. Since $H$ and hence $G$ are odd, we deduce that the zeros of $G$ are given by 0, $d_1$ and $-d_1$.
	\end{proof}
	
	 Another important value that will appear later on is
	\begin{equation}\label{eq:C_1}
	C_1 = 1-H'(d_1).
	\end{equation}
	Note that these values do not depend on any of the constitutive parameters of our problem but on the choice of the transfer function. With this in mind, let us define
	\begin{equation}\label{eq:mu_1}
	\mu_1=\frac{d_1}{C_1}.
	\end{equation}
	
	
	\begin{thm}
	\label{thm:ESSmon}
		Suppose the transfer kernel $H$ satisfies hypothesis~\eqref{eq:hypothesis_H}. Then there exists a monomorphic ESS if and only if
		\begin{equation} \label{eq:cond:mono}
			 \mu\leq \mu_1\quad\text{and}\quad \tau<\frac{2}{\mu}.
		\end{equation}
		Moreover, in this case we have 
		$$
		z_0 = \mu \qquad \text{and}\qquad \rho_0=1-\frac{\tau\mu}{2}.
		$$
	\end{thm}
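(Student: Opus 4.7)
The plan is to impose the monomorphic ansatz $n = \rho_0\,\delta_{z_0}$ and extract both the explicit values of $(z_0,\rho_0)$ (necessity) and a single scalar inequality characterizing the parameter range (sufficiency). With this ansatz the nonlocal term reduces to $\tau H(z-z_0)$, so the fitness simplifies to $F(z) = 1 - gz^2 - \rho_0 + \tau H(z-z_0)$. The ESS identity $F(z_0)=0$, together with the fact that $F$ attains its maximum at the interior point $z_0$ (which forces $F'(z_0)=0$), immediately yields $\rho_0 = 1 - gz_0^2$ and $z_0 = \tau/(2g) = \mu$, hence $\rho_0 = 1 - \tau\mu/2$. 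Nontriviality of $n$ requires $\rho_0 > 0$, i.e.\ $\tau < 2/\mu$, while the second order condition $F''(z_0) = -2g < 0$ is automatic.

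It remains to characterize when $F(z)\leq 0$ holds globally. The substitution $y=z-\mu$ together with the identity $g\mu = \tau/2$ collapses the inequality to $\tau(H(y)-y)\leq gy^2$. For $y\geq 0$ this is automatic, since the concavity of $H$ on $[0,\infty)$ together with $H(0)=0,\ H'(0)=1$ forces $H(y)\leq y$, while the right-hand side is nonnegative. For $y<0$, setting $w=-y>0$ and using the oddness of $H$, the condition reduces to
\[
\psi_\mu(w) := \frac{w^2}{2\mu} - w + H(w) \;\geq\; 0, \qquad w>0.
\]
Thus sufficiency is reduced to determining the range of $\mu$ for which this scalar inequality holds.

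I would conclude by a monotonicity plus continuity argument. Observe that $\psi_\mu(w)$ is strictly decreasing in $\mu$ for each fixed $w>0$, and that $\psi_\mu(0)=\psi_\mu'(0)=0$ with $\psi_\mu''(0)=1/\mu>0$. Consequently the set $A := \{\mu>0 : \psi_\mu\geq 0 \text{ on } (0,\infty)\}$ is downward closed by monotonicity, closed in $(0,\infty)$ by continuity of $\mu \mapsto \psi_\mu(w)$, nonempty (for $\mu$ small one checks $\psi_\mu'>0$ on $(0,\infty)$, so $\psi_\mu>0$ there), and bounded above (since $\psi_\mu(1) \to H(1)-1<0$ as $\mu \to \infty$). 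Hence $A=(0,\mu^*]$ for some finite $\mu^*>0$. A compactness argument—based on $\psi_\mu(w)\to+\infty$ as $w\to\infty$ uniformly on bounded intervals of $\mu$, and on the uniform lower bound $\psi_\mu(w)\geq w^2/(4(\mu^*+1))$ for $w$ small and $\mu\leq\mu^*+1$—applied to a sequence $\mu_n\searrow\mu^*$ with $\psi_{\mu_n}(w_n)<0$ extracts a point $w^*\in(0,\infty)$ with $\psi_{\mu^*}(w^*)=0$. Since $\psi_{\mu^*}\geq 0$, this $w^*$ is a local minimum, so $\psi_{\mu^*}'(w^*)=0$ as well. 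Eliminating $\mu^*$ from the pair $\psi_{\mu^*}(w^*) = \psi_{\mu^*}'(w^*) = 0$ produces
\[
2H(w^*) = w^*\bigl(1+H'(w^*)\bigr),
\]
whose unique positive solution is $w^*=d_1$ by Lemma~\ref{lem:d1}. Reinserting yields $\mu^* = d_1/(1-H'(d_1)) = \mu_1$, closing the equivalence $\psi_\mu\geq 0$ on $(0,\infty) \Longleftrightarrow \mu\leq \mu_1$.

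The main obstacle I anticipate is the clean identification of the threshold $\mu^*$ with $\mu_1$: one must first establish that at the threshold $\psi_{\mu^*}$ is necessarily forced to develop a tangent zero at an interior point $w^*>0$, and then recognize the resulting algebraic elimination as exactly the equation governing $d_1$ in Lemma~\ref{lem:d1}. The pointwise monotonicity of $\psi_\mu$ in $\mu$ is what makes the two-sided equivalence sharp and avoids any need for finer structural assumptions on $H$ beyond~\eqref{eq:hypothesis_H}.
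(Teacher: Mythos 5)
Your proof is correct, and its skeleton coincides with the paper's: necessity of $z_0=\mu$, $\rho_0=1-\tau\mu/2$, $\tau<2/\mu$ from the first-order conditions at the ESS point, and identification of the sufficiency threshold by locating a second tangential zero of the fitness, whose pair of equations (value and derivative both vanish) reduces exactly to \eqref{eq:tanh-cosh}, hence to $d_1$ and $\mu^*=d_1/(1-H'(d_1))=\mu_1$ via Lemma~\ref{lem:d1}. The differences are in how the threshold is produced and how the two directions are closed. The paper works with $J_{1,\mu}(z)=-z^2+2\mu H(z-z_0)$, argues by continuity in $\mu$ (somewhat informally) that a first value $\mu^*$ exists at which the maximum is attained at a second point, must then rule out the candidate $z_1=\mu^*+d_1$ by a sign contradiction, and needs a separate explicit computation ($\tilde f(\mu)=-d_1^2+2\mu(d_1-H(d_1))$, increasing in $\mu$) to show failure for $\mu>\mu_1$. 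You instead reduce the global condition to the scalar inequality $\psi_\mu(w)=\frac{w^2}{2\mu}-w+H(w)\geq 0$ on $(0,\infty)$, after disposing of the side $z>z_0$ once and for all by concavity ($H(y)\leq y$), which is what spares you the exclusion of $\mu^*+d_1$; the pointwise strict monotonicity of $\psi_\mu$ in $\mu$ then makes the admissible set an interval $(0,\mu^*]$ and yields both implications simultaneously, with the compactness argument supplying rigorously the interior tangential zero at the threshold (the place where the paper's continuity argument is thinnest). The only point worth writing out is the nonemptiness of your set $A$: the claim $\psi_\mu'>0$ on $(0,\infty)$ for small $\mu$ uses that $H''$ is bounded below (so that $\psi_\mu''=1/\mu+H''>0$), which does follow from \eqref{eq:hypothesis_H} but should be said. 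In short: same key lemma and same algebraic identification, but your variational/monotonicity formulation is more self-contained and tightens the threshold-existence step, at the cost of the small compactness detour.
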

	
	\begin{proof}
	
	We begin by assuming that there exists a monomorphic ESS $z_0$ such that
		 $$
		n=\rho_0\delta(z-z_0),\qquad \Phi_0(z)=\tau H(z-z_0), \qquad F(z)=1-gz^2-\rho_0+\tau H(z-z_0).
		$$
		Since $F(z)$ takes its maximum at the ESS point $z_0$ we deduce that
		\[ \begin{cases}
			1 - gz_0^2 - \rho_0 =0  \\
			-2gz_0 + \tau=  0.
		\end{cases} \]
		It follows that 
		$$
		z_0 = \frac{\tau}{2g}=\mu,\qquad \text{and}\qquad \rho_0 = 1 - \frac{\tau^2}{4g}=1-\frac{\tau \mu}{2}.
		$$
		Since $\rho_0>0$ is required, this imposes $\tau<2/\mu$.	It remains to prove that this candidate is an ESS if and only if $  \mu\leq \mu_1$. Note from Definition \ref{definition:ESS} that $z_0$ is an ESS point iff 
		$$
		\max_z F(z)= g\mu^2+\max_z  g\big(-z^2 +2\mu H(z-z_0)\big)=g\mu^2+g\,\max_z   J_{1,\mu}(z)=F(z_0)=0,
		$$	
		with
		$$
		J_{1,\mu}(z)=-z^2 +2\mu H(z-z_0).
		$$
		When $\mu=0$ the function $J_{1,\mu}$ is strictly concave and the  equality above trivially holds. Note also that thanks to Assumption \eqref{eq:hypothesis_H}, for all $\mu$, 
	$$
	J_{1,\mu}''(z_0)=-2<0.
	$$	
		Since $J_{1,\mu}$ is continuous with respect to $\mu$ and since $H(z-z_0)$ is strictly convex for  $z<z_0$, we deduce that there exists a constant $\mu^*$ such that for all $\mu<\mu^*$ the function $J_{1,\mu}$ attains its maximum at the only point $z_0$ and such that $J_{\mu^*}$ attains its maximum at least at two points: $z_0$ and a second point $z_1$. 
		We will prove that $\mu^*=\mu_1$. To this end, we use the fact that since $F$ takes its maximum at a second point $z_1$ we have
			$$
		\begin{cases}
			2H(z_1-\mu^*)=\frac{z_1^2-{\mu^*}^2}{\mu^*}\\
			z_1=\mu^* H'(z_1-\mu^*).
		\end{cases}
		$$ 
		which is equivalent to
		$$
		\begin{cases}
			2H(z_1-\mu^*)=\frac{z_1^2-{\mu^*}^2}{\mu^*}\\
			(z_1-\mu^*)\left(1+H'(z_1-\mu^*)\right)=\frac{z_1^2-{\mu^*}^2}{\mu^*}.
		\end{cases}
		$$
We then use Lemma \ref{lem:d1} to deduce that the only possibilities for $z_1$ are $z_1=\mu^*+d_1$ and $z_1=\mu^*-d_1$. Note that $z_1=\mu^*$ is excluded since we have supposed that $z_1\neq z_0$. The option $z_1=\mu^*+d_1$ leads to 
	$$
		\begin{cases}
			z_1^2={\mu^*}^2+2\mu^*H(d_1)\\
			z_1= \mu^* H'(d_1),
		\end{cases}
		$$
		which simply cannot hold (we would be getting that $z$ is at the same time bigger and smaller than $\mu^*$). We are left then with only one  option $z_1=\mu^*-d_1$, which yields
		$$
		\begin{cases}
			z_1^2={\mu^*}^2-2\mu^*H(d_1)\\
			z_1=\mu^* H'(d_1),
		\end{cases}
		$$
		and thus
		$$
		\mu^*=\frac{2H(d_1)}{1-(H'(d_1))^2}=\frac{d_1\left(1+H'(d_1)\right)}{\left(1-H'(d_1)\right)\left(1+H'(d_1)\right)}=\frac{d_1}{C_1}=\mu_1.
		$$
		This shows that $0\leq \mu\leq \mu_1$ implies the existence of a monomorphic ESS, provided $\tau<\frac 2\mu$. It remains to prove that for $\mu>\mu_1$, $\{z_0\}$ is not an ESS. To prove this, we will argue by contradiction. Let us suppose that $\{z_0\}$ is an ESS, which means, as we have seen, that
        $$
        F(z)=-g(z^2-\mu^2)+2g\mu H(z-\mu).          
        $$
        Then to arrive to a contradiction it is enough to show that for $\mu>\mu_1$
		$$
		\max\limits_{z\in\R}F(z)\geq  F (\mu - d_1) > 0=F(z_0).
		$$
        In order to do so let us define
        $$
        \tilde{f}(\mu):=\frac{F(\mu-d_1)}{g}=-d_1^2+2\mu(d_1-H(d_1))
        $$ and notice that, by definition, $\tilde{f}(\mu_1)=F (\mu_1 - d_1)/g=0$, while
        $$
        \frac{\partial\tilde f}{\partial \mu} =2(d_1-H(d_1))>0.
        $$
	\end{proof}
	
	It is important to note that the key point in this proof is that when the solution is monomorphic we are able to find an explicit formula for $F_1$. The same idea will be helpful when studying the dimorphic case.	
	
	As stated in \textrm{Conjecture} 1. we believe that to have $m$-morphic ESS $\mu$ should satisfy
	$$
	\mu_{m-1}\leq \mu \leq \mu_m,
	$$
	with $(\mu_i)$ an increasing sequence. One then could ask for instance what would happen if 
	$$
	0\leq \mu\leq \mu_{1}, \qquad \tau \geq \frac{2}{\mu} \,?
	$$
	Thanks to Theorem \ref{thm:ESSmon} we know that in this case, there does not exist any monomorphic ESS. From \textrm{Conjecture} 1. neither we  expect to have $m$-morphic ESS with $m>1$. The only remaining possibilities are the non-existence of ESS in this range of parameters (hence non-existence of a positive steady solution to \eqref{eq:main_growth} with $\e$ small) or the existence of an ESS which is not composed only of isolated points with possible continuous support. We believe the first option to hold that is the non-existence of  ESS. To verify  whether the existence conditions of Theorems~\ref{thm:existence} and~\ref{thm:limit_epsilon} are compatible with this hypothesis,
 one could ask if    the condition $\tau<1$ implies $\tau<2/\mu$.  As we will see in the next Lemma, this is the case since for all the transfer kernels considered in this work the value $\mu_1$ is necessarily smaller than 2, meaning that for all $\mu\in[0,\mu_1]$, the value $2/\mu$ is greater than 1 and thus any $\tau$ satisfying $\tau<1$ satisfies automatically $\tau<2/\mu$.
	
\begin{lem}\label{lem:mu_1_less_2}
	For all transfer kernel $H$ satisfying hypothesis~\eqref{eq:hypothesis_H} we have that $\mu_1<2$.
\end{lem}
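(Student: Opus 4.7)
The plan is to reduce the claim to an inequality involving $H$ and $H'$ evaluated at $d_1$, and then exploit the shape of $H$ encoded in Hypothesis~\eqref{eq:hypothesis_H}.

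First, I would use the defining equation $d_1(1+H'(d_1))=2H(d_1)$ from Lemma~\ref{lem:d1} to rewrite
$$\mu_1 = \frac{d_1}{1-H'(d_1)} = \frac{2H(d_1)}{(1-H'(d_1))(1+H'(d_1))} = \frac{2H(d_1)}{1-H'(d_1)^2},$$
so that $\mu_1<2$ is equivalent to the scalar inequality $H(d_1)+H'(d_1)^2<1$.

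Next, I would introduce the auxiliary function $h(z):=H(z)+H'(z)^2$. By Hypothesis~\eqref{eq:hypothesis_H}, $h(0)=1$, $h(z)\to 1$ as $z\to\infty$, and $h'(z)=H'(z)(1+2H''(z))$ so that $h'(0)=1>0$. Since $h$ starts and ends at $1$ but increases strictly at $0$, it must dip below $1$ on some interval, and in particular $H''$ must drop strictly below $-1/2$ somewhere. The shape condition \eqref{eq:hypothesis_H}(3) ensures that $H''$ attains its unique minimum at $z_H$, so the equation $H''(z)=-1/2$ has exactly two positive roots $z_a<z_H<z_b$. Consequently $h$ is strictly increasing on $(0,z_a)$, strictly decreasing on $(z_a,z_b)$, and strictly increasing on $(z_b,\infty)$; since $h(\infty)=1$ and $h$ increases strictly on the last interval, $h(z_b)<1$, and there is a unique $z_1\in(z_a,z_b)$ with $h(z_1)=1$, such that $h(z)<1$ for all $z>z_1$.

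It then suffices to show $d_1>z_1$. Since $G(z):=2H(z)-z(1+H'(z))$ has $d_1$ as its unique positive root (with $G>0$ on $(0,d_1)$ and $G<0$ on $(d_1,\infty)$ by the proof of Lemma~\ref{lem:d1}), the inequality $d_1>z_1$ is equivalent to $G(z_1)>0$. Using $H(z_1)=1-H'(z_1)^2=(1-H'(z_1))(1+H'(z_1))$, a direct computation yields
$$G(z_1) \;=\; (1+H'(z_1))\bigl[2(1-H'(z_1))-z_1\bigr],$$
so the claim reduces to proving $z_1+2H'(z_1)<2$.

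The main obstacle is this last inequality: it is tight and both $z_1$ and $H'(z_1)$ are implicitly determined by $H$. To handle it I would introduce $\Lambda(z):=z+2H'(z)-2$ and observe that $\Lambda(0)=0$, $\Lambda(\infty)=+\infty$, and $h'(z)=H'(z)\Lambda'(z)$, so that $h-1$ and $\Lambda$ share the same critical points $z_a,z_b$. Since $\Lambda$ is not bounded, and the area identity
$$\int_0^z\Bigl(H'(t)-1+\tfrac{t}{2}\Bigr)\,dt \;=\; H(z)-z+\tfrac{z^2}{4}$$
combined with $2H(d_1)=d_1(1+H'(d_1))$ gives $\int_0^{d_1}(H'(t)-1+t/2)\,dt=d_1\Lambda(d_1)/4$, the inequality $\Lambda(d_1)<0$ can be read off once one controls the sign of this integral using the strict concavity of $H$ and the decomposition of $\Lambda'$ on $(0,z_a)$, $(z_a,z_b)$, $(z_b,\infty)$. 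This final sign analysis is the technical heart of the proof.
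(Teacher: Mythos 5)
Your reductions are correct as far as they go: from $d_1(1+H'(d_1))=2H(d_1)$ one indeed gets $\mu_1=2H(d_1)/(1-H'(d_1)^2)$, the identity $G(z_1)=(1+H'(z_1))\left[2(1-H'(z_1))-z_1\right]$ checks out, and the integral identity $\int_0^{d_1}\bigl(H'(t)-1+\tfrac t2\bigr)dt=\tfrac{d_1}{4}\Lambda(d_1)$ is a correct consequence of $2H(d_1)=d_1(1+H'(d_1))$. But the argument is not a proof: every step is an \emph{equivalent reformulation} of the claim $\mu_1<2$. Indeed $\mu_1<2\iff \Lambda(d_1)<0\iff H(d_1)+H'(d_1)^2<1\iff d_1>z_1\iff G(z_1)>0$, and also $\Lambda(d_1)<0\iff H(d_1)<d_1-\tfrac{d_1^2}{4}$. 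You stop exactly at the point where a genuine inequality must be extracted from \eqref{eq:hypothesis_H}: the final paragraph ("the inequality $\Lambda(d_1)<0$ can be read off once one controls the sign of this integral\dots this final sign analysis is the technical heart of the proof") defers the entire substance. Nothing in the sketch explains how concavity of $H$ and the sign pattern of $\Lambda'=1+2H''$ on $(0,z_a)$, $(z_a,z_b)$, $(z_b,\infty)$ pins down the sign of $\int_0^{d_1}\Lambda$, because that sign depends on where $d_1$ sits relative to the zero set of $\Lambda$ — which is the same difficulty you started with. Note also that your reformulation $H(d_1)<d_1-\tfrac{d_1^2}{4}$ is precisely the comparison with the parabola $P(z)=z-\tfrac{z^2}{4}$ that the paper's proof establishes, and it does so by a non-trivial crossing argument on $[d_1,2)$: assuming the contrary, $H$ lies above $P$ just past $d_1$, and since $P(2)=1>H$ there must be a crossing point $z_p<2$ with $H(z_p)=P(z_p)$, $H'(z_p)\leq P'(z_p)$, forcing $G(z_p)\geq 0$ and contradicting that $d_1$ is the unique positive zero of $G$ with $G<0$ beyond it. Some argument of this kind (crucially using $H<1$ together with the uniqueness of $d_1$) is exactly what is missing from your plan.

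Two smaller points. First, the sentence "since $h$ starts and ends at $1$ but increases strictly at $0$, it must dip below $1$" is a non sequitur at that stage (a function can rise above $1$ and relax back to $1$ from above); what does follow is only that $h$ must decrease somewhere, hence $\min H''<-\tfrac12$, and the dip below $1$ is in fact only justified later via $h(z_b)<\lim_{z\to\infty}h(z)=1$. Second, since \eqref{eq:hypothesis_H} only gives $H'''\leq 0$ on $[0,z_H]$, the level set $\{H''=-\tfrac12\}$ need not consist of exactly two points on that side; this is harmless for your monotonicity decomposition but should be phrased accordingly. Neither of these is the real obstruction — the real obstruction is that the key inequality is reformulated, not proved.
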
	
\begin{proof}
	Let us argue by contradiction and suppose that $\mu_1\geq 2$. Recall that 
	$$
	\mu_1=\frac{d_1}{1-H'(d_1)},\qquad \text{where $d_1$ is the unique positive solution of \eqref{eq:tanh-cosh} and $d_1>z_H$}.
	$$
	The condition $\mu_1\geq 2$ implies   that
	\begin{equation}\label{eq:cond_derivative_d_0}
		H'(d_1)\geq 1-\frac{d_1}{2}.
	\end{equation}
	
	Let us define   the polynomial
	$$
	P(z):=z-\frac{1}{4}z^2.
	$$
	This polynomial is important because it is the solution to the ODE
	$$
	z(1+P'(z))=2P(z)\quad\text{for all }z\in\R
	$$
	that satisfies that $\max\limits_{z\in\R} P(z) =P(2)= 1$.
	
	Now  recall the function $G$ defined in \eqref{def:G}.
	Thanks to Lemma \ref{lem:d1}, the function $G(z)$ has only one positive zero at the point $z=d_1$ with $G'(d_1)<0$. This means that
    $$
    d_1(1+H'(d_1))=2H(d_1),
    $$
    from where we deduce that necessarily $d_1<2$, since $H(d_1)<1$. Introducing~\eqref{eq:cond_derivative_d_0} in this last equation yields $H(d_1)\geq P(d_1)$.
    
    Let us see now that there exists a $\delta>0$ such that $H(z)> P(z)$ for all $z\in(d_1,d_1+\delta)$. If $H(d_1)>P(d_1)$ this is trivially true by continuity. Suppose on the other hand that $H(d_1)= P(d_1)$, meaning since $G(d_1)=0$ that $H'(d_1)=1-\frac{d_1}{2}= P'(d_1)$. Since $G'(d_1)<0$ this implies that $H''(d_1)>-\frac{1}{2}=P''(d_1)$ and thus the function $H(z)$ must be above $P(z)$ in a neighbourhood of the point $z=d_1$.

    Two options are now possible. Either $H(z)\geq P(z)$ for all $z\in [d_1+\delta, 2]$, which would led to a contradiction since $P(2)=1$ and $H(z)<1$, or there exists a point $z_p\in [d_1+\delta,2)$ such that
    $$
    \begin{cases}
        H(z_p)=P(z_p)=z_p-\frac{z_p^2}{4},\\
        H'(z_p)\leq P'(z_p)=1-\frac{z_p}{2},
    \end{cases}
    $$
    but this would mean that $G(z_p)\geq 0$, which is a contradiction with the fact that the only positive zero of $G(z)$ was $d_1$. We deduce that necessarily
	$$
	\mu_1=\frac{d_1}{1-H'(d_1)}<2.
	$$
\end{proof}

	\subsection{The dimorphic case}
	\label{sec:dimo}
		In this section, we first identify in a unique way the unique dimorphic ESS assuming its existence. Then we provide an additional assumption which allows us to provide a necessary and sufficient condition for the dimorphic ESS to exist.
\begin{lem}
\label{lem:dim}
	Let's suppose that there exists a phenotypic density $n$ corresponding to a dimorphic ESS such that
	 \[ n(z)  =  a  \delta_{z_1}(z) + b \delta_{z_2} (z),\qquad a+b=\rho_0, \qquad z_1>z_2.\]
	 Then, 	 
		\begin{equation}\label{eq:R-rho}
			\rho_0=  1-gz_1^2+g(\mu-\mu_1)H(d_1),
		\end{equation}
		and 
		\[  \begin{aligned}
			&z_1 = \mu\left(1-\frac{C_1}{2}\left( 1-\frac{\mu_1}{\mu}\right) \right),&& \qquad a = \frac{\rho_0}{2}\left( 1+\frac{\mu_1}{\mu}\right), \  \\
			&z_2 =\mu\left(1-\frac{C_1}{2}\left( 1+\frac{\mu_1}{\mu}\right) \right), && \qquad b = \frac{\rho_0}{2}\left( 1-\frac{\mu_1}{\mu}\right). 
		\end{aligned}\]
		Finally, the distance between the two values $z_1$ and $z_2$ is constant and equal to
		$$
		z_1-z_2=d_1
		$$
		Alternatively, equivalent formulas for the ESS points are
		$$
		z_1  = \mu \left(1 - \frac{C_1}{2}\right) + \frac{d_1}{2},\quad z_2  = \mu \left(1 - \frac{C_1}{2}\right) - \frac{d_1}{2}.
		$$
\end{lem}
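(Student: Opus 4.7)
The plan is to turn the ESS conditions of Definition~\ref{definition:ESS} at the two support points into an explicit algebraic system in the four unknowns $(z_1,z_2,a,b)$, and then to recognise the characteristic equation \eqref{eq:tanh-cosh} of Lemma~\ref{lem:d1}. First, I would write out the fitness function for the candidate density, namely
\begin{equation*}
F(z) = 1 - gz^2 - \rho_0 + \frac{\tau}{\rho_0}\bigl[a\,H(z-z_1) + b\,H(z-z_2)\bigr],
\end{equation*}
and impose that $F$ attains its maximum value $0$ at both $z_1$ and $z_2$. Since $z_1,z_2$ lie in the interior of $\R$ and $F\in C^1$, this yields the four equations $F(z_1)=F(z_2)=0$ and $F'(z_1)=F'(z_2)=0$. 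Using that $H$ is odd (so $H'$ is even) and setting $d:=z_1-z_2>0$, these equations simplify to
\begin{equation*}
\begin{cases}
1-gz_1^2-\rho_0 + \tfrac{\tau b}{\rho_0}H(d)=0,\\
1-gz_2^2-\rho_0 - \tfrac{\tau a}{\rho_0}H(d)=0,\\
-2gz_1 + \tfrac{\tau}{\rho_0}\bigl[a + b\,H'(d)\bigr]=0,\\
-2gz_2 + \tfrac{\tau}{\rho_0}\bigl[a\,H'(d) + b\bigr]=0.
\end{cases}
\end{equation*}

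Next, I would combine these equations pairwise to extract the information on $d$. Subtracting the first two equations and using $a+b=\rho_0$ gives $z_1+z_2 = \tau H(d)/(gd)$. Adding the last two and again using $a+b=\rho_0$ gives $z_1+z_2 = \tau(1+H'(d))/(2g)$. Equating the two expressions for $z_1+z_2$ yields precisely
\begin{equation*}
2H(d) = d\bigl(1+H'(d)\bigr),
\end{equation*}
which is exactly equation~\eqref{eq:tanh-cosh}. Since $d>0$, Lemma~\ref{lem:d1} forces $d=d_1$, proving the identity $z_1-z_2=d_1$.

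Then, from $z_1+z_2 = \mu(1+H'(d_1)) = \mu(2-C_1)$ (using the definition $C_1=1-H'(d_1)$) together with $z_1-z_2 = d_1 = \mu_1 C_1$, one obtains the explicit formulas for $z_1$ and $z_2$ by simple arithmetic; the two equivalent expressions in the lemma follow immediately from the identity $d_1 = \mu_1 C_1$. To recover $a$ and $b$, I would subtract the third equation from the fourth to get $(a-b)C_1 = 2gd_1\rho_0/\tau$, whence $a-b = (\mu_1/\mu)\rho_0$; combined with $a+b=\rho_0$ this yields the claimed expressions. Finally, substituting $b = (\rho_0/2)(1-\mu_1/\mu)$ into the first equation gives $\rho_0 = 1 - gz_1^2 + g(\mu-\mu_1)H(d_1)$, which is~\eqref{eq:R-rho}.

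The proof is essentially algebraic once the critical-point equations are written down, and there is no serious analytic obstacle; the only non-trivial step is recognising that the combination of the zero-value and zero-derivative conditions reduces to the transcendental equation~\eqref{eq:tanh-cosh}, so that the separation $d_1$ is intrinsic to the kernel $H$ and independent of the parameters $\tau$ and $g$.
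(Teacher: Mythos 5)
Your proposal is correct and follows essentially the same route as the paper: write the four stationarity conditions $F(z_i)=F'(z_i)=0$, combine them pairwise (difference of the value equations, sum of the derivative equations) to reduce to the transcendental relation \eqref{eq:tanh-cosh}, invoke Lemma~\ref{lem:d1} to get $z_1-z_2=d_1$, and then solve the linear system coming from the difference of the derivative equations to obtain $a-b=\rho_0\mu_1/\mu$. Where you diverge is only in the endgame, and your version is in fact shorter: you read off $z_1,z_2$ directly from the sum relation $z_1+z_2=\mu(2-C_1)$ together with $z_1-z_2=d_1=\mu_1C_1$, and you obtain \eqref{eq:R-rho} in one line by inserting $b=\tfrac{\rho_0}{2}(1-\mu_1/\mu)$ into the first value equation and using $\tfrac{\tau b}{\rho_0}=\tfrac{\tau}{2\mu}(\mu-\mu_1)=g(\mu-\mu_1)$. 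The paper instead recovers $z_1,z_2$ separately from each derivative equation (in terms of $a,b$) and then determines $\rho_0$ by a longer consistency computation between those expressions and the ones coming from the value equations (squaring, adding, and using $a^2+b^2=\tfrac{\rho_0^2}{2}(1+\mu_1^2/\mu^2)$); one can check the resulting closed-form agrees with \eqref{eq:R-rho} precisely because $G(d_1)=0$. So your argument is a valid, slightly more economical variant of the same proof.
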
	
Considering such a phenotypic density we obtain
		\[  
			\Phi_0(z)  = \frac{a\tau}{\rho_0}H(z - z_1) + \frac{b\tau}{\rho_0} H(z - z_2) ,
		 \]
	and the corresponding fitness function
	\begin{equation}\label{eq:F_2_mu}
	\begin{array}{rl}
	F (z) &= 1-gz^2 - \rho_0 + \frac{\tau a}{\rho_0} H(z -z_1) + \frac{\tau b}{\rho_0} H(z -z_2)\\
	&= g\big(z_1^2-z^2-(\mu-\mu_1)H(d_1)+2\mu(\frac{a}{\rho_0}H(z-z_1)+\frac{b}{\rho_0}H(z-z_2))\big),\\
	&=: gJ_{2,\mu}(z).
	\end{array}
	\end{equation}
Note that
$$
J_{2,\mu_1}(z)=\mu_1-z^2+2\mu_1H(z-\mu_1)=\mu_1+J_{1,\mu_1}.
$$
Following the arguments in the proof of Theorem \ref{thm:ESSmon} we know that $J_{1,\mu_1}$ and hence $J_{2,\mu_1}$ attain their maximums at exactly two points. Moreover the value of $J_{2,\mu_1}$ at those maximum points is equal to $0$. We also note that for all $\mu>\mu_1$, $J_{2,\mu}$ takes the value $0$ at least at the  points $z_1$ and $z_2$. 
We then define
$$
\mu_2:=\inf \{\mu\geq \mu_1 \,|\, \text{$J_{2,\mu}$ takes the value $0$ at least at three points.}  \}.
$$

First, let us see that $\mu_2>\mu_1$, because, in principle, it could happen that after achieving the second maximum in the monomorphic case when $\mu=\mu_1$ then immediately a third maximum appears when $\mu>\mu_1$. In order to avoid this, it is enough to see that $J_{2,\mu_1}$ is strictly concave at its maxima, meaning, by the continuous dependence of $J_{2,\mu}$ on the parameter $\mu$, that in a small right-neighbourhood of $\mu_1$ the function $J_{2,\mu}$ is still strictly concave at its maxima and thus has only two of them. We readily compute from~\eqref{eq:F_2_mu} that $J_{2,\mu_1}''(z_1)=-2$, and on the other point 
$$
J_{2,\mu_1}''(z_2)=-2-2\mu_1H''(d_1)=-2-2\frac{d_1}{1-H'(d_1)}H''(d_1)=\frac{2}{1-H'(d_1)}G'(d_1)<0,
$$
since $G'(d_1)<0$ and $1-H'(d_1)>0$.

Let us see now that $\mu_2$ is finite. Notice that $z_1>z_2>0$ for all $\mu>\mu_1$. Since $J_{2,\mu}(0)\to \infty$ as $\mu\to \infty$ (due to the fact that $z_1^2$ behaves like $\mu^2$) it is clear then that $\mu_2<\infty$ and that there must exist at least a third point $z_3$ such that $J_{2,\mu}(z_3)=0$ when $\mu=\mu_2$.
  We are now ready to state our additional assumption

\noindent\textbf{Extra hypothesis on $H$ in the dimorphic case. } We then assume that
	\begin{equation}\label{eq:extra_hypothesis_H}
		\frac{\partial J_{2,\mu}}{\partial \mu}(z_3)>0\quad\text{for all }\mu\geq \mu_2.
	\end{equation}

	We want to remark again that, while cumbersome to write down for a general $H$, this condition is easily checked in the practice for many specific transfer kernels thanks to the fact that the formula for $J_{2,\mu}$ is going to be explicit and dependant only on $z,\mu$ and $H$. Let us show this in the particular case $H(z)=\tanh(z)$. In this case a numerical estimation of $\mu_2$ and $z_3$ is
	$$
	\mu_2\sim 4,03729,\quad z_3\sim 0,513.
	$$
	Having this we simply compute, thanks to the formula~\eqref{eq:F_2_mu}, that
	$$
	\frac{\partial J_{2,\mu}}{\partial \mu}(z_3)>1,148 \mu-2,675>0
	$$
	whenever $\mu\geq\mu_2$.
	
	\begin{thm}
	\label{thm:dim}
		Suppose the transfer kernel $H$ is such that the hypothesis~\eqref{eq:hypothesis_H} and~\eqref{eq:extra_hypothesis_H} are satisfied. Then there exists a positive constant $\mu_2$ depending only on $H$ such that there exists a dimorphic ESS if and only if
		\begin{equation} \label{eq:cond:di}
			\mu_1<\mu\leq \mu_2\quad\text{and}\quad \tau<\tau_2,
		\end{equation}
		with
		\begin{equation}
		\label{cond:tau2}
		\tau_2=\frac{2\mu}{z_1^2-(\mu-\mu_1)H(d_1)}.
		\end{equation}
	\end{thm}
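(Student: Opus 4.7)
The plan is to mirror the proof of Theorem \ref{thm:ESSmon}. First I would assume that a dimorphic ESS exists. Lemma \ref{lem:dim} then already supplies the only possible candidate $n = a\,\delta_{z_1} + b\,\delta_{z_2}$, with $z_1,z_2,a,b,\rho_0$ given explicitly in terms of $\mu$, $\mu_1$ and $H$. Positivity of $a$ is automatic, while positivity of $b$ forces $\mu > \mu_1$. Substituting $g = \tau/(2\mu)$ in \eqref{eq:R-rho} yields $\rho_0 = 1 - \frac{\tau}{2\mu}\bigl[z_1^2 - (\mu - \mu_1)H(d_1)\bigr]$, so that $\rho_0 > 0$ is equivalent to $\tau < \tau_2$ with $\tau_2$ as in \eqref{cond:tau2}. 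This delivers the necessary conditions $\mu > \mu_1$ and $\tau < \tau_2$.

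It remains to decide, for $\mu > \mu_1$, whether this candidate really is an ESS, i.e.\ whether $F(z) \leq 0$ for every $z \in \R$ with equality exactly on $\{z_1,z_2\}$. By \eqref{eq:F_2_mu} this is equivalent to $J_{2,\mu}(z) \leq 0$ on $\R$, and the two zeros at $z_1$, $z_2$ are already built in by construction. The question is therefore reduced to understanding for which values of $\mu$ the function $J_{2,\mu}$ attains its global maximum only at $z_1$ and $z_2$, with value $0$.

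For the upper bound $\mu \leq \mu_2$, I would argue as follows. If $\mu > \mu_2$, the extra hypothesis \eqref{eq:extra_hypothesis_H} gives $\frac{\partial J_{2,\mu}}{\partial \mu}(z_3) > 0$ on $[\mu_2,\mu]$, so integrating in the parameter yields $J_{2,\mu}(z_3) > J_{2,\mu_2}(z_3) = 0$, hence $F(z_3) > 0$ and the ESS property fails. For $\mu \in (\mu_1,\mu_2]$ I would start from $\mu = \mu_1$, where $J_{2,\mu_1}(z) = \mu_1 + J_{1,\mu_1}(z)$ so the analysis of the monomorphic case (Theorem \ref{thm:ESSmon}) gives exactly two global maxima $z_1 = \mu_1$ and $z_2 = \mu_1 - d_1$, both with value $0$, at which the strict concavity computed in the paragraph preceding \eqref{eq:extra_hypothesis_H} applies. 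Continuity of $J_{2,\mu}$ in $\mu$, combined with the very definition of $\mu_2$ as the first value at which a third zero appears, would then be used to propagate the property $J_{2,\mu} \leq 0$ throughout $(\mu_1,\mu_2]$.

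The main obstacle is precisely this last propagation. In principle, even if no third zero of $J_{2,\mu}$ appears before $\mu = \mu_2$, a bump of $J_{2,\mu}$ could still become positive somewhere without reaching the zero level by a pure geometric deformation. To rule this out I would follow each local maximum of $J_{2,\mu}$ as $\mu$ increases: since any newly appearing local maximum of $J_{2,\mu}$ must be born at a value at which it takes value strictly below $0$ (because of the strict concavity at $z_1$, $z_2$ and continuity of the kernels in~\eqref{eq:hypothesis_H}), it can become positive only after crossing the zero level, which would produce a third zero and contradict the minimality in the definition of $\mu_2$. Making this trajectory argument rigorous, using only the qualitative information on $H$ given by~\eqref{eq:hypothesis_H} and the extra hypothesis~\eqref{eq:extra_hypothesis_H}, is where the technical difficulty lies, and it is the reason why, as stressed by the authors, explicit verification (as for $H(z)=\tanh z$) is carried out separately.
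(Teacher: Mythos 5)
Your proposal is correct and follows essentially the same route as the paper: Lemma \ref{lem:dim} gives the unique candidate, positivity of $b$ and of $\rho_0$ (after substituting $g=\tau/(2\mu)$) give the necessity of $\mu>\mu_1$ and $\tau<\tau_2$, and integrating \eqref{eq:extra_hypothesis_H} in $\mu$ from $\mu_2$ gives $J_{2,\mu}(z_3)>0$, hence no ESS, for $\mu>\mu_2$. The ``main obstacle'' you flag in the sufficiency part is precisely the step the paper dismisses as immediate from the definition of $\mu_2$, and your first-crossing sketch is in fact the complete argument: since $J_{2,\mu}$ is continuous in $(z,\mu)$, tends to $-\infty$ as $|z|\to\infty$, and is $\leq 0$ with exactly two (strictly concave) maxima for $\mu$ near $\mu_1$, the first parameter at which a positive excursion appears forces an interior touching point where $J_{2,\mu}$ vanishes at a third point, contradicting the minimality of $\mu_2$; so no additional hypothesis is needed there. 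One small misreading: the explicit computation for $H(z)=\tanh(z)$ in the paper is carried out to verify the extra hypothesis \eqref{eq:extra_hypothesis_H} itself (which is awkward to state for general $H$), not to substitute for this propagation argument.
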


	\begin{proof}[Proof of Lemma \ref{lem:dim}]
	Let us suppose that there exists a dimorphic ESS
	\[ n(z)  =  a  \delta_{z_1}(z) + b \delta_{z_2} (z),\qquad a+b=\rho_0.\]
Applying the ESS condition given in Definition \ref{definition:ESS} we obtain that
		\[\begin{aligned}
			&\big(  R - \rho_0 + \frac{a\tau}{\rho_0}H(\cdot - z_1) + \frac{b\tau}{\rho_0} H(\cdot- z_2) \big) (z_i) = 0, \\
			\text{and} \qquad & \big(  R - \rho_0 + \frac{a\tau}{\rho_0}H(\cdot - z_1) + \frac{b\tau}{\rho_0} H(\cdot- z_2) \big) '(z_i) = 0.
		\end{aligned} \]
		for $i=1,2$. We deduce that
		\begin{align}
			&1 - gz_1^2 - \rho_0 + \frac{ b}{\rho_0}  \tau H(z_1 -z_2) = 0   \label{eq:dimor1}\\
			& 1 - gz_2^2  - \rho_0 + \frac{ a}{\rho_0}  \tau H(z_2 -z_1)= 0  \label{eq:dimor2} \\ 
			&  -2gz_1 + \frac{a\tau}{\rho_0} + \frac{b\tau }{\rho_0}\cdot H'(z_1-z_2)  = 0,\label{eq:dimor3} \\
			&  -2gz_2 + \frac{b\tau}{\rho_0} + \frac{a\tau }{\rho_0}\cdot H'(z_2-z_1)=0.  \label{eq:dimor4}
		\end{align}

		Let us prove now that $z_1 - z_2 = d_1$. We first provide a new expression of $z_1 + z_2$ by adding \eqref{eq:dimor3} to \eqref{eq:dimor4}. Since $a + b = \rho_0$ and $H'$ is an even function, it follows
		\begin{equation}\label{eq:dimor:int1}
			z_1 + z_2 = \frac{\tau }{2g}\left(1 + H'(z_1-z_2) \right) .
		\end{equation}
		Next, we subtract \eqref{eq:dimor1} from \eqref{eq:dimor2} and recall that $H$ is an odd function to find 
		\[g(z_1 - z_2)(z_1 + z_2) -\tau H(z_1-z_2) =0. \]
		Substituting $z_1+z_2$ from~\eqref{eq:dimor:int1} into this last equation, recalling that $z_1>z_2$ and Lemma \ref{lem:d1}, yields
		$$
		z_1-z_2=d_1.
		$$
		Now we recover the expression for $a$ and $b$. Recall that
		$$
		C_1=1-H'(d_1).
		$$
		By subtracting the equation \eqref{eq:dimor4} from \eqref{eq:dimor3}, it follows
		\[-2g d_1 +  (a - b)  \frac{\tau}{\rho_0}C_1 = 0. \]
		From here we deduce that $a$ and  $b$ are solutions to the following system 
		\[ \left\lbrace
		\begin{aligned}
			&  a + b = \rho_0  \\
			&  a - b = \frac{2\rho_0 d_1 g}{\tau C_1} = \rho_0\frac{\mu_1}{\mu}.
		\end{aligned}\right. \]
		The expressions of $a,\ b$ follow. 		
		Finally, we recover the expression for $z_1$ and $z_2$. Since we know the values of $z_1 - z_2 = d_1$, $a$ and $ b$  we can deduce with the help of \eqref{eq:dimor3} and \eqref{eq:dimor4} the expressions for $z_1$ and $z_2$, which must be the same as the values obtained from \eqref{eq:dimor1} and \eqref{eq:dimor2}. From this last equality condition we will deduce a formula for $\rho_0$.
	
		From \eqref{eq:dimor3} and \eqref{eq:dimor4} we deduce
		$$
		z_1=\frac{\mu }{\rho_0}\left(\rho_0 - bC_1 \right),\quad z_2=\frac{\mu }{\rho_0}\left(\rho_0 - aC_1 \right),
		$$
		from which the values of $z_1,z_2$ stated in the theorem follow. Furthermore,   from \eqref{eq:dimor1} and \eqref{eq:dimor2} we obtain
		$$
		z_1^2=\frac{1-\rho_0}{g} + \frac{b}{g\rho_0}\tau H(d_1) ,\quad z_2^2=\frac{1-\rho_0}{g} - \frac{a}{g\rho_0}\tau H(d_1).
		$$
		From the previous equations we deduce that  
		$$
		\begin{aligned}
			\frac{\mu^2 }{\rho_0^2}\left( \rho_0-bC_1 \right)^2 = \frac{1 - \rho_0}{g}+\frac{b\tau H(d_1)}{g\rho_0},\\
			\frac{\mu^2 }{\rho_0^2}\left( \rho_0-aC_1 \right)^2 = \frac{1 - \rho_0}{g}-\frac{a\tau H(d_1)}{g\rho_0}.
		\end{aligned}
		$$
		 Adding both equations provides
		$$
		\frac{\mu^2 }{\rho_0^2}\left( 2\rho_0^2 + (a^2+b^2)C_1^2   -2\rho_0^2C_1 \right) = \frac{2(1 - \rho_0)}{g}-(a-b)\frac{\tau H(d_1)}{g\rho_0}.
		$$
		Since
		$$
		a-b=\rho_0\frac{\mu_1}{\mu}\quad \text{and}\quad a^2+b^2=\frac{\rho_0^2}{2}\left(1+\frac{\mu_1^2}{\mu^2}\right)
		$$
		we obtain
		$$
		\mu^2 \left( 2 + \frac{C_1^2}{2}\left(1+\frac{\mu_1^2}{\mu^2}\right)   -2C_1 \right) = \frac{2(1 - \rho_0)}{g}-2\mu_1 H(d_1)
		$$
		and thus we readily see that
		$$
		\frac{1 - \rho_0}{g}= \mu_1 H(d_1) + \frac{\mu^2 }{2}\left( 2 + \frac{C_1^2}{2}\left(1+\frac{\mu_1^2}{\mu^2}\right)   -2C_1 \right),
		$$
		and from here we can deduce the equation for $\rho_0$.		\end{proof}


	\begin{proof}[Proof of Theorem \ref{thm:dim}]
	To prove Theorem \ref{thm:dim} it is enough to prove that the parameters obtained in Lemma \ref{lem:dim} provides us with a dimorphic ESS if and only if \eqref{eq:cond:di}  holds. It is easy to verify that
	the properties 
	$$
	0<\rho_0,\qquad 0< a_0-b_0< \rho_0,
	$$
	imply that
	$$
	\mu_1< \mu, \qquad \text{and} \qquad \tau<\tau_2.
	$$
	
		To check the $\mu\leq \mu_2$ we recall that $J_{2,\mu_2}(z)$ takes the value $0$ at a third point $z_3$ and that thanks to	 the hypothesis~\eqref{eq:extra_hypothesis_H}, we have that $\frac{\partial J_{2,\mu}}{\partial \mu}(z_3)>0$ for all $\mu\geq \mu_2$.
		Therefore, for all $\mu>\mu_2$ we have that $J_{2,\mu_2}(z_3)>0$, {implying that the corresponding phenotypic density does not correspond to an ESS.} We have shown that dimorphism implies $\mu_1<\mu\leq \mu_2$.
		Finally, from the definition of $\mu_2$ and Definition \ref{definition:ESS}, it is immediate that the phenotypic density obtained in Lemma \ref{lem:dim} corresponds to an ESS as long as $\mu_1< \mu\leq \mu_2$ and $\tau<\tau_2$.	\end{proof}
	
	
	\subsection{The trimorphic and $m$-morphic cases}
	\label{sec:trimo}
	
	We expect that in the range of parameters provided in Conjecture 1, there would exist an $m$-morphic ESS. The computations become more involved for $m\geq 3$ and we were not able to obtain an analytic result for  these cases. Nevertheless, our numerical explorations seem to confirm Conjecture 1. Here, we discuss briefly the trimorphic case, where the corresponding phenotypic density   is given by
	$$
	n(z)=\sum\limits_{i=1}^3 a_i\delta_{z_i}(z),\qquad a_1+a_2+a_3=\rho_0,
	$$
	so that the transfer term becomes
	$$
	\Phi_0(z) =\tau \Psi_0(z)= \frac{a_1 \tau}{\rho_0}H(z - z_1) + \frac{a_2\tau}{\rho_0} H(z - z_2)  + \frac{a_3\tau}{\rho_0} H(z - z_3).
	$$
	Similarly to the computations in the previous sections, the fact that $n$ corresponds to an ESS implies that
	$$
	1 - gz_i^2 - \rho_0 +  \tau \Psi_0(z_i) = 0,\qquad -2gz_i+\tau\Psi_0'(z_i), \qquad i=1,2,3, 
	$$
	which can be written as 
	\begin{equation}
	\label{eq:zi-tri}
	-z_i^2+2\mu  \Psi_0(z_i)=\frac{\rho_0-1}{g},\qquad -z_i+\mu \Psi_0'(z_i)=0,\qquad i=1,2,3.
	\end{equation}
		We   devised a program to compute the roots of this system  in the form of the triplets $(z_1, z_2, z_3)$ and $(a_1, a_2, a_3)$ and the mass $\rho_0$, in the particular case $H(z)=\tanh(z)$. It was done through Newton's method. In view of the structure of the system \eqref{eq:zi-tri} we expect that the quantities $(z_1, z_2, z_3)$ and $(a_1, a_2, a_3)$ would depend only on the parameter $\mu$ but that the total size $\rho_0$ would depend on both on $\mu$ and $g$.  We managed to characterize the trimorphic case all the way from $\mu_2$ to a quite precise candidate for $\mu_3$, as shown in  Table \ref{table-tri}. Note that \eqref{eq:zi-tri} is not a sufficient condition for $n$ to correspond to an ESS. We however verified numerically that the obtained parameters lead to a fitness function $F$ which takes its maximum at the points $z_i$, $i=1,2,3$, and hence the results correspond to evolutionary stable strategies. In Figure \ref{fig:tri} we illustrate a numerical solution of  \eqref{eq:time_dependant_main}, with parameters $\mu=5$ and $g=0.05$, where we observe that in long time the solution indeed converges to a trimorphic phenotypic density close to the one obtained in Table \ref{table-tri} for the case $\mu=5$.
	
	\begin{table}[htb]
		\centering
		\begin{tabular}{|l|l|l|l|l|l|l|l|l|}
			\hline
			\rowcolor[HTML]{96FFFB} 
			\textbf{$\mu$} & \textbf{$g$} & \textbf{$z_1$} & \textbf{$z_2$} & \textbf{$z_3$} & \textbf{$\frac{a_1}{\rho_0}$} & \textbf{$\frac{a_2}{\rho_0}$} & \textbf{$\frac{a_3}{\rho_0}$} & \textbf{$\rho_0$}  \\ \hline
			4,03729 & 0,0619 & 3,125 & 1,52 & 0,515 & 0,7337 & 0,2663 & 0 & 0,513 \\ \hline
			4,16 & 0,06 & 3,181 & 1,581 & 0,5532 & 0,723 & 0,2662 & 0,011 & 0,52 \\ \hline
			4,31 & 0,058 & 3,2453 & 1,6521 & 0,6001 & 0,7119 & 0,2663 & 0,0218 & 0,5225 \\ \hline
			4,386 & 0,057 & 3,2791 & 1,689 & 0,6254 & 0,7063 & 0,2664 & 0,0273 & 0,5232 \\ \hline
			5 & 0,05 & 3,5396 & 1,98 & 0,8439 & 0,6662 & 0,2675 & 0,0663 & 0,5255 \\ \hline
			5,2632 & 0,0475 & 3,6641 & 2,1014 & 0,9424 & 0,6516 & 0,268 & 0,0804 & 0,5248 \\ \hline
			6,25 & 0.04 & 4,0893 & 2,5467 & 1,3268 & 0,6073 & 0,2696 & 0,1232 & 0,5152 \\ \hline
			6,3176 & 0,0396 & 4,1183 & 2,5728 & 1,3537 & 0,6047 & 0,2697 & 0,1256 & 0,5142 \\ \hline
		\end{tabular}
		\caption{The numerical solutions of system~\eqref{eq:zi-tri}. The value $\tau=0,5$ was kept constant. The values $z_i, a_i$ changed depending on $\mu$. However, one can appreciate that $\rho_0$ suffers very little change and moves around the value $1-\tau$}
		\label{table-tri}
	\end{table}
	
	In the last case present in the table a fourth zero point appeared at
  the point $z_4\sim 0,7123$, suggesting that this is the furthest we can go with trimorphism. For $\mu>\mu_3$, the fitness function $F(z)$ presented positive values, while for all $\mu_2<\mu\leq \mu_3$ that where tested proper trimorphism was obtained. Here we show some pictures of $F(z)$ both in proper trimorphism and when tetramorphism is attained. In order to better appreciate the details of the function, a signed power $F^{0,2}$ is also displayed. 
	
	\begin{figure}[H]
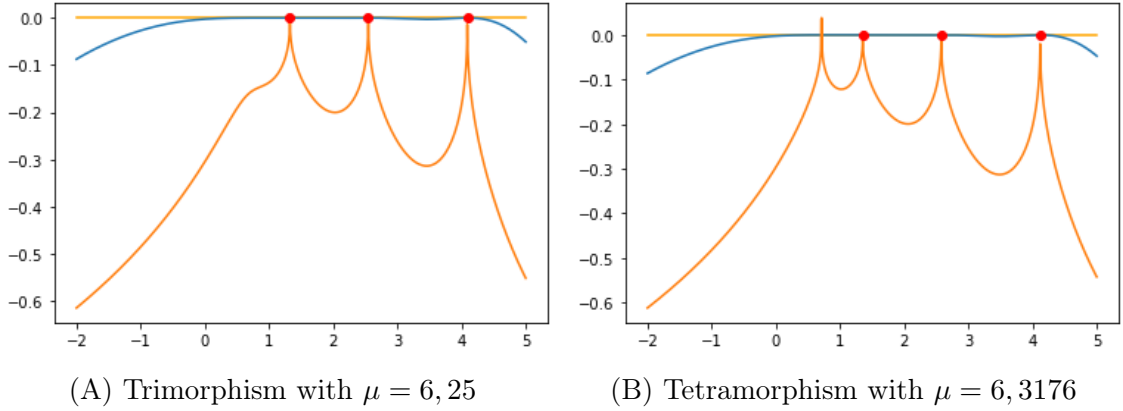

		\centering
        \renewcommand\thesubfigure{\Alph{subfigure}}
		\subfloat[Trimorphism with $\mu=6,25$]{ \includegraphics[scale = 0.55]{Trimorphism.png}}
		\subfloat[Tetramorphism with $\mu=6,3176$]{ \includegraphics[scale = 0.55]{Tetramorphism.png}}
		\caption{Fitness functions $F$ (blue) and $F^{0,2}$ (orange, signed power) with $z_1, z_2,z_3$ highlighted in red. The parameters where taken from the last two rows of Table 1. As we see, for $\mu=6,25$ we obtain trimorphism, while for $\mu=6,3176$ the trimorphic fitness function attains positive values, meaning that the ESS is not trimorphic.}
	\end{figure}
	
	 		\appendix
	\section{The numerical scheme for the resolution of \eqref{eq:time_dependant_main}}\label{section:annex}
	


	Our code is an adaptation of the finite different schemes used in~\cite{CalvezHivertYoldacs, CalvezFigueroaAl}. In~\cite{CalvezFigueroaAl} a slightly different model, where the mutations were modeled by an integral kernel instead of a diffusion term, was studied numerically. The  scheme that is introduced in this article is expected to be asymptotic preserving, that is with a stability condition  independent of the small parameter $\e$. No theoretical analysis of the scheme is however provided. A theoretical analysis of a closely related scheme was provided for a selection-mutation model, without any transfer term, in \cite{CalvezHivertYoldacs} considering a diffusion term for the mutations. Our numerical scheme follows the schemes provided in \cite{CalvezHivertYoldacs, CalvezFigueroaAl}. 
 
	
	

Note that a Hopf-cole transformation of the phenotypic density, that is $u_\e(t,z)=\e\log (n_\e(t,z))$, with $n_\e(t,z)$ the solution to \eqref{eq:time}, leads to the following equation
$$
 \p_t u_\e(t,z)	-\varepsilon \p^2_{zz}u_\varepsilon (t,z)= |\p_z u_\e|^2+R(z) -  \rho_\varepsilon(t)    +  \tau   \displaystyle\int_\R \frac{ n_\varepsilon( t,y)}{ \rho_\varepsilon}H( z - y)\ {\rm dy}.
$$
Our numerical scheme  is based on the resolution of this equation. We provide below the main ingredients of our finite difference scheme.

	 We consider uniform time and trait steps  $\Delta t$ and $\Delta z$.   The  range for the traits is chosen to be $[Z_{min}, Z_{max}]$ so that the number of  discretization points of the interval is given by
	$$
	N_z=\frac{Z_{max}-Z_{min}}{\Delta z}+1.
	$$
	while  the grid is defined by
	$$
	z_j=Z_{min}+j\Delta z,\quad \text{for }0\leq j\leq N_z-1.
	$$
	
	Similarly, we consider the time interval $[0, T_{max}]$, discretized with step size $\Delta t$, meaning that we obtain
	$$
	N_t=\frac{T_{max}}{\Delta t}+1
	$$
	time steps, or iterations of the main loop of the code, and the time grid is given by
	$$
	t_i = i\Delta t, \quad\text{for } 0 \leq i \leq N_t -1.
	$$

	
	\noindent\textit{Step 1) } We begin now the $N_t-1$ iterations of the main loop of the code where we compute each $u_j^i$. The numerical formula for each iteration reads
	\begin{equation}\label{eq:scheme}
		\frac{u^{i+1}_j-u^i_j}{\Delta t}=R(x_j)-\rho^{i+1} + T_j^i + \varepsilon \Delta u^i_j + (D u^i_j)^2.
	\end{equation}
	Let us explain each of the terms separately.
	
	The term $R(z_j)$ stands for the growth-death term. Considering the quadratic growth rate \eqref{R:quadratic}, we obtain
	$$
	R(z_j)=1-gz_j^2.
	$$
	
	The function $T_j^i$ stands for the transfer term, which we computed directly as a convolution between the transfer kernel and $n/\rho$. We define
	$$
	H(z_j):=\tanh(z_j)\quad \text{where }z_j\in [Z_{min}-Z_{max}, Z_{max}-Z_{min}]
	$$
	and
	$$
	\frac{n^i}{\rho^i}(z_j):=\frac{e^{u^i_j/\varepsilon}}{\Delta z\ \sum\limits_{j=0}^{N_z-1}e^{u^i_j/\varepsilon}}.
	$$
	The term $\Delta u_j^i$ stands for the Laplacian term and has been computed simply by
	$$
	\Delta u_j^i =\frac{u_{j-1}^i - 2u_j^i + u_{j+1}^i}{(\Delta z)^2}.
	$$
	It is worth noting at this point that in order to compute derivatives on the boundary of the interval, the function $u_j^i$ is extrapolated, at each iteration, one point to the left and one point to the right of the interval in the points $u^i_{-1}$ and $u^i_{N_x}$ defined by
	$$
	u^i_{-1}=4u^i_0 -6u^i_1 + 4u^i_2 - u^i_3 \quad\text{and}\quad u^i_{N_x}=4u^i_{N_x-1} -6u^i_{N_x-2} + 4u^i_{N_x-3} - u^i_{N_x-4}.
	$$
	
	Next, we determine the term $(D u^i_j)^2$. For this term, a classical monotone scheme has been considered which is suitable to capture the viscosity solution of the corresponding Hamilton-Jacobi equation at the limit as $\e\to 0$, see \cite{C.L:84}. The formula goes by
	$$
	(D u^i_j)^2:=\max\{(D u^+_j)^2, (D u^-_j)^2\}
	$$
	where
	$$
	(D u^+_j)^2:=\begin{cases}
		0 &\text{if } \nabla^+_j u \geq 0\\
		(\nabla^+_j u)^2 \quad &\text{if }\nabla^+_j u < 0
	\end{cases},\quad \text{with }\nabla^+_j u:=\frac{u_j^i-u_{j-1}^i}{\Delta z}
	$$
	and
	$$
	(D u^-_j)^2:=\begin{cases}
		0 &\text{if } \nabla^-_j u \leq 0\\
		(\nabla^-_j u)^2 \quad &\text{if }\nabla^-_j u > 0
	\end{cases},\quad \text{with }\nabla^-_j u:=\frac{u_{j+1}^i-u_{j}^i}{\Delta z}.
	$$
	
	At this point we are ready to define an intermediate function
	$$
	A^i_j:= u^i_j + \Delta t \left( R(z_j) + T_j^i + \varepsilon \Delta u^i_j + (D u^i_j)^2 \right)
	$$
	since the computations of $\rho^{i+1}$ is a bit more complicated.
	
	\noindent\textit{Step 2) }Let us compute now $\rho^{i+1}$. From equation~\eqref{eq:scheme} we deduce
	$$
	e^{u_j^{i+1}/\varepsilon}= e^{- \rho^{i+1}\Delta t/\varepsilon} e^{A^i_j/\varepsilon}.
	$$
	Integrating here we obtain the following closed equation
	\begin{equation}\label{eq:mass_scheme}
		\rho^{i+1}=e^{- \rho^{i+1}\Delta t/\varepsilon}\  \Delta x\ \sum\limits_{j=0}^{N_z-1}e^{A_j^i/\varepsilon}.
	\end{equation}
	From here we deduce that $\rho^{i+1}$ must be the root of the function
	$$
	h(y):=ye^{Cy\Delta t/\varepsilon}-\Delta z\  \sum\limits_{j=0}^{N_z-1}e^{A_j^i/\varepsilon}.
	$$
	Equivalently, one can take logarithms in equation~\eqref{eq:mass_scheme} and arrive to
	$$
	g(y):=-\varepsilon \ln y - C\Delta t y +\varepsilon\ln \left(\Delta z\  \sum\limits_{j=0}^{N_z-1}e^{A_j^i/\varepsilon}\right).
	$$
    The total population size $\rho^{i+1}$ is the root of both functions $h$ and $g$. It is worth noting that the method used for finding such roots is Newton's, and since in both cases the functions are smooth convex functions of $y$ the method converges.
	
	The reason why both approaches (with and without logarithms) are necessary is the following. The function $h$ is to be chosen when $\rho^{i+1}$ is close to 0 (for large values it becomes less accurate), whereas $g$ is more adapted when $\rho^{i+1}$ is not small, since it is more prone to accumulate numerical errors when $\rho^{i+1}\to 0$. In the effective implementation of the method, either one formulation or the other is chosen, depending on the values reached during the iterations of the algorithm.
	
	\noindent\textit{Step 3) } We are ready now to compute
	$$
	u_j^{i+1}=A_j^{i} - \Delta t \rho^{i+1}.
	$$

\bigskip
\noindent \textbf{Acknowledgements}:
All authors would like to thank Sylvie M\'el\'eard for the introduction of the problem, fruitful discussions and early computations. The authors also thank H\'el\`ene Hivert for fruitfull discussions concerning the numerical simulations.
This work has been supported by  the ANR project DEEV ANR-20-CE40-0011-01, the Chair ``Modélisation Mathématique et Biodiversité" of Veolia Environnement-Ecole Polytechnique-Museum National d’Histoire Naturelle-Fondation X. This project has also been partially funded by the European Union (ERC-AdG SINGER-101054787). Views and opinions expressed are however those of the
authors only and do not necessarily reflect those of the European Union or the European
Research Council. Neither the European Union nor the granting authority can be held responsible
for them.

\end{document}